\documentclass[11pt,a4paper]{article}

\usepackage[T1]{fontenc}
\usepackage{lmodern}
\usepackage[margin=1in]{geometry}
\usepackage{microtype}
\usepackage{amsmath,amssymb,amsthm,mathtools}
\usepackage{enumerate}
\usepackage{authblk}
\usepackage[hidelinks,hypertexnames=false]{hyperref}
\hypersetup{
  pdftitle={Cyclic Shuffle Groups: Universal Two-Transitivity and Complete Classification},
  pdfauthor={Benjamin Marsh},
  pdfsubject={Finite permutation groups and generalized shuffle groups},
  pdfkeywords={cyclic shuffle groups, generalized shuffle groups, finite permutation groups, two-transitive groups, fixed point ratios}
}

\newcommand{\Alt}{\operatorname{Alt}}
\newcommand{\Sym}{\operatorname{Sym}}
\newcommand{\Sh}{\operatorname{Sh}}
\newcommand{\AGL}{\operatorname{AGL}}
\newcommand{\GL}{\operatorname{GL}}
\newcommand{\PSL}{\operatorname{PSL}}
\newcommand{\PGL}{\operatorname{PGL}}
\newcommand{\PSU}{\operatorname{PSU}}

\newcommand{\PGammaL}{\operatorname{P\Gamma L}}
\newcommand{\PGammaU}{\operatorname{P\Gamma U}}
\newcommand{\Soc}{\operatorname{Soc}}
\newcommand{\Sz}{\operatorname{Sz}}
\newcommand{\fpr}{\operatorname{fpr}}
\newcommand{\Fix}{\operatorname{Fix}}
\newcommand{\sgn}{\operatorname{sgn}}

\newtheorem{theorem}{Theorem}[section]
\newtheorem{proposition}[theorem]{Proposition}
\newtheorem{lemma}[theorem]{Lemma}
\newtheorem{corollary}[theorem]{Corollary}

\theoremstyle{definition}
\newtheorem{definition}[theorem]{Definition}

\theoremstyle{remark}

\title{Cyclic Shuffle Groups:\\ Universal Two-Transitivity and Complete Classification}
\author{Benjamin Marsh}
\affil{Sei Labs and University of Portsmouth\\\small ben@seinetwork.io}
\date{August 2026}

\begin{document}

\maketitle

\begin{abstract}
Let \(k\geq 3\), \(n\geq 1\), and let \(H_{k,n}=\Sh(C_k,n)\) be the group generated by the standard \(k\) pile perfect shuffle and cyclic pile permutation on a deck of \(kn\) cards. We prove that \(H_{k,n}\) is \(2\)-transitive whenever \(n\) is not a power of \(k\). Residual commutators give translations supported on two pile labels, and a strongly connected digit digraph propagates these translations throughout the deck, a separate argument resolves the antipodal support case. We then combine this result with fixed point ratio bounds for primitive groups and explicit boundary calculations to determine \(H_{k,n}\) for all \(k\) and \(n\). If \(n=k^f\), then \(H_{k,n}\cong C_k\wr C_{f+1}\). If \(k=4\) and \(n=2\cdot4^j\), then \(H_{4,n}\cong\AGL(2j+3,2)\). In every other case, \(H_{k,n}\) is \(\Alt(kn)\) or \(\Sym(kn)\), according to the parity of its generators. This proves Conjecture~1.10 of Amarra, Morgan and Praeger and Conjecture~5.1 of Xia, Zhang, Zhang and Zhu. More generally, we classify \(\Sh(P,n)\) for every pile group \(P\) containing \(C_k\), and obtain the odd \(k\) part of their Conjecture~5.2.
\end{abstract}

\medskip
\noindent\textit{2020 Mathematics Subject Classification.}
Primary 20B35; Secondary 20B20, 20B15.

\noindent\textit{Keywords.}
Cyclic shuffle groups, generalized shuffle groups, finite permutation groups,
two-transitive groups, fixed point ratios.

\section{Introduction and statement of the result}
\label{sec:introduction}

Put
\[
  \Omega_{k,n}=\{0,1,\ldots,kn-1\}.
\]
Every \(x\in\Omega_{k,n}\) has a unique pile coordinate expression \(x=an+b\), where \(0\leq a<k\) and \(0\leq b<n\). Define
\begin{align}
  \sigma(an+b)&=kb+a,\label{eq:shuffle}\\
  \rho(an+b)&=((a+1)\bmod k)n+b.\label{eq:pile-cycle}
\end{align}
Thus \(\sigma\) is the standard perfect shuffle and \(\rho\) cyclically permutes the \(k\) piles. Throughout, products are composed right to left, \((gh)(x)=g(h(x))\). We study
\begin{equation}
  H_{k,n}=\langle\sigma,\rho\rangle.
\label{eq:cyclic-shuffle-group}
\end{equation}
Write
\[
  C_k=\langle(0\ 1\ \cdots\ k-1)\rangle\leq\Sym(k)
\]
for the standard cyclic pile group. Equivalently,
\[
  H_{k,n}=\langle\sigma,\sigma\rho\rangle,\qquad
  (\sigma\rho)(an+b)=kb+(a+1\bmod k),
\]
so this is the cyclic generalized shuffle group \(\Sh(C_k,n)\) in the notation of \cite{AMP,XZZZ}. The group theoretic study of perfect shuffles includes the two pile analysis of Diaconis, Graham and Kantor \cite{DGK} and the general pile treatment of Medvedoff and Morrison \cite{MM}; generalized shuffle groups are developed systematically in \cite{AMP}. Amarra, Morgan and Praeger formulated the cyclic alternating containment problem as \cite[Conjecture~1.10]{AMP}. Xia, Zhang, Zhang and Zhu classified the full pile group
\[
  G_{k,kn}=\Sh(\Sym(k),n),
\]
in their Theorem~1.5. Their Section~5 treats \(\Sh(C_k,n)\) as an open problem, describes its determination as the ``best possible'' strengthening of their theorem, and formulates the exact cyclic classification as \cite[Conjecture~5.1, pp.~14--15]{XZZZ}. To the author's knowledge, neither this conjecture nor the related alternating pile conjecture has subsequently been resolved. Theorem~\ref{thm:classification} proves Conjecture~5.1, and hence Conjecture~1.10 of \cite{AMP}. Theorem~\ref{thm:cycle-containing-pile-groups} further classifies \(\Sh(P,n)\) whenever \(C_k\leq P\), from which the odd \(k\) part of \cite[Conjecture~5.2]{XZZZ} follows. No claim is made here for the even \(k\) part of Conjecture~5.2, for Conjecture~5.3, or for Question~5.4 of \cite{XZZZ}.

\begin{theorem}[]\label{thm:classification}
Let \(k\geq 3\) and \(n\geq 1\).
\begin{enumerate}[(i)]
\item If \(n=k^f\) for some \(f\geq 0\), then, in product action,
\begin{equation}
  H_{k,n}\cong C_k^{\,f+1}\rtimes C_{f+1}
  =C_k\wr C_{f+1},
\label{eq:power-deck-classification}
\end{equation}
where \(C_{f+1}\) cyclically permutes the \(f+1\) factors of \(C_k^{\,f+1}\).

\item If \(k=4\) and \(n=2\cdot 4^j\) for some \(j\geq 0\), then
\begin{equation}
  H_{4,n}\cong\AGL(2j+3,2)
\label{eq:four-pile-classification}
\end{equation}
in its natural \(2\)-transitive action of degree \(4n=2^{2j+3}\).

\item In all remaining cases,
\begin{equation}
H_{k,n}=
\begin{cases}
\Alt(kn),&
n(k-1)\equiv 0\pmod 2\ \text{and}\
\displaystyle\binom{k}{2}\binom{n}{2}\equiv 0\pmod 2,\\
\Sym(kn),&\text{otherwise}.
\end{cases}
\label{eq:generic-classification}
\end{equation}
\end{enumerate}
\end{theorem}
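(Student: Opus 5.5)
The plan has four parts: the two explicit families, then \(2\)-transitivity, then the reduction from \(2\)-transitivity to the alternating or symmetric group, then parity.

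\textbf{Explicit cases (i) and (ii).} For (i), write \(x\in\Omega_{k,n}\) in base \(k\) with \(f+1\) digits, with the pile label \(a\) as the top digit. Then \(\sigma\) acts as a cyclic rotation of the digit string, and \(\rho\) adds \(1\) modulo \(k\) to the top digit only. Conjugating \(\rho\) by powers of \(\sigma\) gives independent translations on each digit. These generate \(C_k^{\,f+1}\) in product action, and \(\sigma\) normalizes this subgroup and permutes the factors cyclically. Comparing orders, \((f+1)k^{f+1}\) on both sides, gives \eqref{eq:power-deck-classification}. For (ii), identify \(\Omega_{4,n}\) with \(\mathbb F_2^{2j+3}\) through binary expansion. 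Here \(\sigma\) becomes a linear map (a rotation of the binary digits) and \(\rho\) becomes an affine map, so \(H_{4,n}\le\AGL(2j+3,2)\). For the reverse inclusion, I would note that \(H_{4,n}\) is \(2\)-transitive by the next step, since \(n\) is not a power of \(4\). Its linear part is then irreducible and transitive on nonzero vectors, and I would identify it with \(\GL(2j+3,2)\) by exhibiting a transvection in the group. That transvection should come from an explicit commutator computed for small \(j\) and then propagated along digits. The linear part then contains all transvections, so it is all of \(\GL(2j+3,2)\).

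\textbf{Universal \(2\)-transitivity.} Following the abstract, I would show \(H_{k,n}\) is \(2\)-transitive whenever \(n\) is not a power of \(k\). Commutators of suitable conjugates \(\sigma^i\rho\sigma^{-i}\) with \(\rho\) give elements that act as translations on just two pile labels. I would build a digraph on digit positions and residues, coming from the mixed-radix structure of \(n\) relative to \(k\). When \(n\) is not a power of \(k\), this digraph should be strongly connected, so the translations propagate to the whole deck. Together these show the point stabilizer is transitive on the remaining points. The antipodal case, where the two supported labels are \(a\) and \(a+k/2\), needs a separate argument, and I expect it to be the main obstacle. My first attempt would be to multiply two such commutators with overlapping supports to break the antipodal symmetry.

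\textbf{From \(2\)-transitivity to \(\Alt(kn)\) or \(\Sym(kn)\).} Now \(H_{k,n}\) is \(2\)-transitive and hence primitive. I would exhibit an element of small support, for instance a commutator from the previous step moving only \(O(n)\) points out of \(kn\). Its fixed point ratio is then bounded below by a constant close to \(1\). Standard fixed point ratio bounds for primitive groups not containing \(\Alt(kn)\) forbid this except for small exceptions. The remaining candidates are affine groups of degree \(2^d\), \(\PGammaL\)-type groups, and boundary values of \(k\) and \(n\) with small degree. I would rule these out one by one using the list of \(2\)-transitive groups. The degree \(kn\) must equal the degree of the candidate action. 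Affine candidates with \(kn=2^d\) survive only when \(\sigma\) and \(\rho\) are both affine, which happens exactly when \(k=4\) and \(n=2\cdot4^j\); this is case (ii). The finitely many small cases I would settle by direct computation.

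\textbf{Parity.} Finally, \(\rho\) is a product of \(n\) disjoint \(k\)-cycles, so \(\sgn\rho=(-1)^{n(k-1)}\). The shuffle \(\sigma\) is the transpose map from a \(k\times n\) array to an \(n\times k\) array. Its sign is \((-1)^{\binom k2\binom n2}\), since it inverts exactly the pairs of cards that differ in both coordinates in the appropriate orientation. Hence \(H_{k,n}\le\Alt(kn)\) exactly under the conditions in \eqref{eq:generic-classification}, which completes (iii).
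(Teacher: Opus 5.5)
Your architecture matches the paper's: explicit power-deck and affine cases, \(2\)-transitivity from controlled commutators propagated by a digit digraph, fixed point ratios, then parity. Part (i) and the sign computation are essentially the paper's. But two central steps have genuine gaps.

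The first gap is the antipodal case \(k=2h\), \(\gcd(k,t)=h\), and your proposed repair cannot work. Here the \(\rho\)-conjugates of \(c=[\rho,u_d]\) are supported on the pairs \(\{a,a+h\}\), which partition \(\mathbb Z/2h\mathbb Z\). So two supports either coincide or are disjoint, and there is nothing to ``overlap''. Each conjugate acts as \((a,B,X)\mapsto(a,B,X+\delta(a))\), so they commute, and every product still satisfies \(\delta(a)+\delta(a+h)\equiv0\pmod t\). A product fixing the point \(0\) has \(\delta(0)\equiv0\), hence \(\delta(h)\equiv0\). Thus no product gives a gate on pile \(h\) inside \((H_{k,n})_0\).

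The missing idea is a different pile-preserving element. With \(t=hu\) and \(u=2m+1\), the paper takes \(\eta=\rho^{-h}u_d\), which translates the residual coordinate by \(m\) on piles \(0,\dots,h-1\) and by \(m+1\) on piles \(h,\dots,2h-1\). With \(\eta^*=\rho^h\eta\rho^{-h}\), the element \(\theta=\eta^{m+1}(\eta^*)^{-m}\) fixes the first half pointwise and translates by \(u\) on the second half. Since \(\theta\) moves by \(u\), not \(1\), a further argument is still needed. For \(h\geq3\), the \(h\) points of a \(\theta\)-orbit in pile \(h\) have distinct first quotients \(\lfloor(2x+1)/u\rfloor+2j\), so \(\sigma^d\) carries one of them into a block whose leading digit avoids \(\{0,h\}\). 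For \(h=2\), the odd-residual points of pile \(2\) are \(\sigma\)-images of safe points, and \(\theta\) swaps residual parity because \(u\) is odd.

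The second gap is the passage to \(\Alt(kn)\). The commutator's fixed point ratio is \((k-2)/k\), which is \(1/2\) for \(k=4\) and \(1/3\) for \(k=3\), not close to \(1\). The Burness--Guralnick bound concerns prime-order elements, via \(c^{t/p}\), with threshold \((p+1)^{-1/2}\).
\begin{itemize}
\item This settles \(k\geq5\), and \(k=4\) when \(t\) has a prime divisor \(p\geq5\).
\item For \(k=4\) and \(t\in\{3^a,2\cdot3^a\}\), the order \(3\) element sits exactly on the threshold.
\item For \(k=3\), the bound helps only if some prime \(p\geq11\) divides \(t\).
\end{itemize}
In the remaining cases the paper needs BG's almost simple Corollary~3 with Table~1 (orders \(3,5,7\)). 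The case \(k=3\), \(t=2^j\) is harder still, since the involution has ratio \(1/3<3^{-1/2}\). There the paper makes a family-by-family pass through the \(2\)-transitive list:
\begin{itemize}
\item Liebeck--Saxl for \(q>4\);
\item Guralnick--Kantor for \(q=4\);
\item an eigenspace count modulo \(3\) for \(q=3\);
\item Aschbacher--Seitz involution fusion for \(\operatorname{Sp}(2m,2)\);
\item explicit long prime cycles for \(n\in\{2,4,8,12\}\).
\end{itemize}
Degree matching cannot replace this: \(\PSL(2,47)\) on \(48=3\cdot16\) points has an admissible degree. Also, \(\PGammaL(2,8)\) on \(9\) points shows that ratio \(1/3\) alone does not force \(\Alt\).

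Your affine exclusion is also invalid. An affine structure on \(\Omega_{k,n}\) need not be the digit coordinates, so whether \(\sigma\) and \(\rho\) are affine in those coordinates is beside the point. Affine groups are excluded either by Corollary~2 itself, which covers all primitive groups, or because \(kn\) is not a prime power (\(k=3\), and \(k=4\) with \(t\neq2\)).

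Finally, in (ii), a linear part equal to \(\GL(m,2)\) does not by itself give \(\AGL(m,2)\). Indeed \(\AGL(3,2)\) contains a \(2\)-transitive complement isomorphic to \(\PSL(2,7)\). You need a translation, such as \(\rho^2=t_{e_{m-1}}\), whose \(\sigma\)-conjugates give all translations. The paper does this before working with the \(\sigma\)-conjugates of \(L=I+E_{m-1,m-2}\).
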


The power deck identification in Theorem~\ref{thm:classification}(i) is known; see \cite[Theorem~1.4(1)]{AMP}. The related four pile affine calculation goes back to Cohen, Harmse, Morrison and Wright \cite{CHMW}, while the cyclic identity in part~(ii) is recorded in \cite[(12), p.~14]{XZZZ}. For completeness, Section~\ref{sec:power-parity} gives a direct binary coordinate proof that makes the exceptional action explicit. The proof of part~(iii) is the main content of this paper. Its elementary core is the following uniform theorem.

\begin{theorem}[]\label{thm:two-transitive}
Let \(k\geq 3\). If \(n\) is not a power of \(k\), then \(H_{k,n}\) is \(2\)-transitive on \(\Omega_{k,n}\).
\end{theorem}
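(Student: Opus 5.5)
The plan is to follow the strategy sketched in the abstract. First establish transitivity, then show that a point stabiliser is transitive on the remaining points. We do the latter by producing enough ``translations'' inside \(H_{k,n}\).

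\textbf{Step 1: transitivity and digit coordinates.} Write \(x=an+b\) and view \(\sigma\) as the map that moves the pile label \(a\) into the low position, \(kb+a\). Iterating \(\sigma\) interleaves pile labels into a base-\(k\)-like expansion of positions. If \(n\) is not a power of \(k\), then \(kn\) is not a power of \(k\), so the orbit structure of \(\sigma\) mixes the ``digit'' coordinates nontrivially. Conjugating \(\rho\) by powers of \(\sigma\) gives permutations \(\rho_i=\sigma^i\rho\sigma^{-i}\). Each \(\rho_i\) adds \(1\bmod k\) to whatever label occupies the relevant position after \(i\) shuffles. Transitivity follows because \(\rho\) changes the pile coordinate and \(\sigma\) sends the within-pile coordinate \(b\) into the pile coordinate.

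\textbf{Step 2: residual commutators.} When \(n=k^f\), all the \(\rho_i\) commute and we get exactly \(C_k\wr C_{f+1}\). When \(n\) is not a power of \(k\), the division \(kb+a\) leaves a nonzero carry, or residue, somewhere. I will locate \(i\) such that \(\rho_i\) and \(\rho\) fail to commute only on points whose labels lie in a two-element set \(\{a,a'\}\). The commutator \([\rho_i,\rho]\) should then act as a translation (a \(k\)-cycle or a product of them) supported on the points carrying those two pile labels, and it fixes everything else. I compute these commutators explicitly in terms of the residue \(n\bmod k\) and the first index where the base-\(k\) expansion of \(n\) is not \(10\cdots0\).

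\textbf{Step 3: propagation via a digit digraph.} Define a digraph on pile labels, or on digit positions, with an arc whenever conjugation by \(\sigma\) or \(\rho\) carries the support of one such translation onto another overlapping support. I will show this digraph is strongly connected. Composing overlapping two-label translations then generates, inside the stabiliser of a point \(x\), elements moving any \(y\neq x\) to any \(z\neq x\). It suffices to connect \(y\) to a fixed reference point through chains of supports avoiding \(x\). Strong connectivity lets us route around \(x\), because each support involves only two labels and at least \(k\ge3\) labels exist, so some label differs from \(x\)'s.

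\textbf{Main obstacle.} The hardest case is when the two labels in the commutator support are antipodal, i.e.\ \(a'=a+k/2\) with \(k\) even. Then the translations are invariant under \(\rho^{k/2}\), and the digraph may only connect labels within cosets of \(\langle k/2\rangle\), which blocks routing. For this antipodal case I would first try to combine a commutator with a suitable power of \(\sigma\) to produce an element that breaks the \(\rho^{k/2}\)-symmetry, using the non-power hypothesis on \(n\) a second time (a different digit of \(n\)). If that fails for small cases, for example \(k=4\) and \(n=2\cdot4^j\), those groups are still \(2\)-transitive (affine). I would then handle them by direct verification of a block-system-free argument: show that no nontrivial block system exists and that the stabiliser has a unique nontrivial suborbit.
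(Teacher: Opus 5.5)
There is a genuine gap in Steps 2--3. With \(n=k^st\), \(k\nmid t\), \(d=s+1\), the useful commutator is \(c=[\rho,u_d]\) with \(u_d=\sigma^{-d}\rho\sigma^{d}\). It is the identity except on piles \(k-r-1\) and \(k-1\), where \(r\) is the residue of \(t\) modulo \(k\) (not of \(n\), which is \(0\bmod k\) once \(s\ge1\)). On those two piles it adds \(-1\), resp.\ \(+1\), modulo \(t\) to the residual coordinate \(X\) in \(x=At+X\). So it is a product of \(t\)-cycles, not \(k\)-cycles, and it and all its \(\rho\)-conjugates preserve every residual block \(I_A\). Consequently no product of these two-label translations ever moves a point into a different block, however their supports overlap. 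The \(\sigma\)-conjugates mix the pile digit with \(X\) and are not two-label translations at all. The transposition-style routing of Step~3 therefore cannot make the stabiliser transitive.

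The missing idea is to use the translations only as gates that fill a whole active block once the stabiliser orbit meets it, and to move between blocks with \(\sigma\) itself, which fixes \(0\). The map \(\sigma^{-1}\) sends a block \(w_0\cdots w_{d-1}\) with residual \(X\) to a block whose new leading digit is \(tw_{d-1}+X\bmod k\). Hence a filled block reaches every leading digit \(tw_{d-1}+j\), \(0\le j<t\). You then need three things: strong connectivity of the resulting digraph \(D_{k,t}\) on the nonzero digits (pile \(0\) must be avoided, since its blocks have no gate fixing \(0\)); a forward-move argument for arbitrary active blocks; and a separate treatment of pile \(0\) via \(\sigma(z)=kz\). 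Your criterion ``\(k\ge3\), so some label differs from \(x\)'s'' is also not the right one. What is needed is that every nonzero pile is an endpoint of an edge of the circulant \(a\sim a+r\) avoiding \(0\), and this holds exactly when \(k/\gcd(k,t)\ge3\).

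The antipodal case \(k=2h\), \(t=hu\) with \(u\) odd is left without a construction. The real obstruction is only pile \(h\): its unique gate is paired with pile \(0\) and therefore moves the point \(0\). ``Another digit of \(n\)'' does not exist for, e.g., \(k=6\), \(n=3\cdot6^s\) or \(k=8\), \(n=4\cdot8^s\); these are antipodal, non-exceptional, and have a single nonzero base-\(k\) digit. Your fallback covers only \(k=4\), \(n=2\cdot4^j\) (via the affine identification), not families such as \(k=4\), \(t=6,10,14,\ldots\).

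The paper's fix sets \(\eta=\rho^{-h}u_d\), \(\eta^*=\rho^h\eta\rho^{-h}\) and \(\theta=\eta^{m+1}(\eta^*)^{-m}\), where \(u=2m+1\). Then \(\theta\) fixes piles \(0,\dots,h-1\) pointwise and adds \(u\) to \(X\) on piles \(h,\dots,2h-1\). For \(u=1\) it is the missing unit gate on pile \(h\). For \(u\ge3\) more is needed:
\begin{itemize}
\item when \(h\ge3\), a pigeonhole argument on the appended quotients \(q_j=\lfloor(2x+1)/u\rfloor+2j\) pushes some point of each \(\theta\)-orbit in pile \(h\) into a safe block under \(\sigma^d\);
\item when \(h=2\), a parity argument on \(X\) is required.
\end{itemize}
Finally, your Step~1 transitivity claim is unsupported, but it is not needed separately. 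It follows from transitivity of the stabiliser of \(0\) on the remaining points together with \(\rho(0)=n\neq0\).
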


Write
\begin{equation}
  n=k^s t,
\label{eq:residual-decomposition}
\end{equation}
where \(s\geq 0\) is maximal, thus \(k\nmid t\). Put
\begin{equation}
  \lambda(k,t)=\frac{k}{\gcd(k,t)}.
\label{eq:residual-cycle-length}
\end{equation}

\begin{theorem}[]\label{thm:residual-cycle}
Let \(k\geq 3\), write \(n=k^s t\), where \(s\geq0\) is maximal (equivalently, \(k\nmid t\)), and suppose that \(t>1\). If
\[
  \lambda(k,t)\geq 3,
\]
then \(H_{k,n}\) is \(2\)-transitive.
\end{theorem}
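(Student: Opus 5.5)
We prove Theorem~\ref{thm:residual-cycle} by first showing that \(H_{k,n}\) is transitive, and then showing that the stabiliser of a point is transitive on the remaining points. Transitivity is immediate, since \(\rho\) moves between piles and powers of \(\sigma\) move within positions.

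\emph{Digit coordinates.} Write \(x=an+b\). The shuffle \(\sigma\) acts as a shift on mixed-radix digits: \(n=k^st\) contributes \(s\) base-\(k\) digits together with one ``residual'' digit \(b\bmod t\)-type coordinate. Because \(k\nmid t\), this residual part does not reduce to pure digit shifts. The key object is the cyclic action induced on residues: on the \(t\)-block, the map \(a\mapsto kb+a\) interacts with \(\rho\), and multiplication by \(k\) modulo \(t\)-related quantities produces orbits of length governed by \(\lambda(k,t)=k/\gcd(k,t)\).

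\emph{Residual commutators.} We form commutators such as \([\sigma^{m}\rho\sigma^{-m},\rho]\), where \(m\) is chosen so that \(\sigma^m\) is the identity on the \(k\)-power digits but acts nontrivially on the residual digit (for example, \(m=s+\operatorname{ord}\)). Such commutators should fix every card except those whose residual coordinate lies in two pile labels, and on those cards they act as translations \(a\mapsto a+1\) versus \(a\mapsto a-1\). Here the hypothesis \(\lambda\ge3\) is used: the two labels \(a\) and \(a+n\bmod k\) (or the labels differing by the residual shift \(t\bmod k\)) are distinct and are not antipodal. This yields an element \(\tau\) supported on a proper union of pile-label classes, acting there as a short cycle.

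\emph{Propagation and the stabiliser.} We build a digraph on digit values, joining \(u\to v\) when conjugation by \(\sigma\) or \(\rho\) carries the support of \(\tau\) at label \(u\) to label \(v\). We show this digraph is strongly connected; this step uses \(t>1\), so that the residual digit actually mixes labels. Conjugating \(\tau\) then gives, for any two points \(y,z\ne x\), a chain of translations fixing \(x\) and moving \(y\) to \(z\). The choice of conjugates must avoid \(x\); since each \(\tau\) has support on only two labels out of \(k\ge3\) and \(\lambda\ge3\), we can route around \(x\)'s label. This shows \(H_x\) is transitive on \(\Omega\setminus\{x\}\).

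\emph{Main obstacle.} The hardest step is verifying the exact support of the residual commutator, namely that it is genuinely two-label and nontrivial, together with the routing step that avoids the fixed point. The case \(\lambda=2\) (the antipodal case) fails precisely here, and this is why it is excluded and handled separately.
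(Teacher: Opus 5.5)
Your outline names the right ingredients: a commutator supported on two piles whose labels differ by \(t \bmod k\), a strongly connected digraph, and routing around the base point. However, there is a genuine gap. Both load-bearing steps are left unproved, and as sketched neither would go through.

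\textbf{The commutator is never specified or computed.} Your exponent \(m=s+\mathrm{ord}\) is undefined. Under the natural reading (\(\mathrm{ord}\) the order of \(\sigma\)) you get \(\sigma^m\rho\sigma^{-m}=\sigma^{s}\rho\sigma^{-s}\). For \(s=0\) this is \(\rho\) itself, so your commutator is trivial.

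The element that works is \(u_{s+1}=\sigma^{-(s+1)}\rho\sigma^{s+1}\). The conjugator \(\sigma^{s+1}\) flushes out all \(s+1\) stored base-\(k\) digits, so that \(\rho\) acts on a digit manufactured from the residual coordinate. Write a point as \((a,B,X)\) with \(x=At+X\), where \(A\) is an \((s+1)\)-digit base-\(k\) word with leading digit \(a\), and put \(t=km+r\). One must check that \(u_{s+1}(a,B,X)=(a+r,B,X+m+\lfloor (a+r)/k\rfloor)\). It follows that \([\rho,u_{s+1}]\) translates \(X\) by \(-1\) on pile \(k-r-1\) and by \(+1\) on pile \(k-1\), and is the identity elsewhere.

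Note that \(\lambda\geq 3\) is not what makes the two labels distinct; they always are. It is needed so that in the circulant graph \(a\sim a+r\), a union of cycles of length \(\lambda\), every nonzero label lies on an edge avoiding \(0\). Only then does every nonzero pile get a residual gate fixing the point \(0\). In the antipodal case, pile \(h\) is joined only to \(0\).

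\textbf{The propagation step fails as described.} The \(\rho\)-conjugates of the commutator change only the residual coordinate inside a fixed block; pile label and lower digits are unchanged. So no chain of them connects points in different blocks. Conjugating by \(\sigma\) does not carry piles to piles, since \(\sigma^{-1}\) scatters a pile across all piles, so your digraph on labels is not well defined. Moreover, \(\sigma\) lies only in the stabilizers of its own fixed points.

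The missing idea is to take the base point \(0\), which \(\sigma\) fixes, and work in \(L=\langle\sigma,\ \text{gates on nonzero piles}\rangle\leq (H_{k,n})_0\). Once the orbit \(1^L\) meets a block in a nonzero pile, the gate fills that whole block. You may then choose \(X\) freely before applying \(\sigma^{-1}\), which sends block \(w_0\cdots w_{d-1}\) to \(a\,w_0\cdots w_{d-2}\) with \(a\equiv tw_{d-1}+X \pmod k\).

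Because gates exist only off pile \(0\), every new leading digit must be nonzero. So you need strong connectivity of the digraph on \(\{1,\ldots,k-1\}\) with arcs \(z\to a\) whenever \(a\equiv tz+j\) for some \(0\leq j<t\). This needs a genuine argument, not a remark that \(t>1\) mixes labels. You also need a loop at \(k-1\) to equalize path lengths, so that all \(d\) digits can be steered simultaneously. After that, forward moves reach arbitrary active blocks, and points of pile \(0\) are reached via \(\sigma(z)=kz\).

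Without these two computations the proposal states a strategy rather than proving the theorem.
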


For odd \(k\), the integer \(\lambda(k,t)>1\) is odd, so Theorem~\ref{thm:residual-cycle} already proves Theorem~\ref{thm:two-transitive}. The only remaining case has \(k=2h\) and \(\gcd(k,t)=h\). A separate argument handles this antipodal support configuration for every \(h\geq 2\), the boundary value \(h=2\) requires a short parity argument on the residual coordinate. The passage from \(2\)-transitivity to alternating group containment separates naturally by pile number. For \(k\geq 5\), a prime-order power of the controlled commutator has fixed point ratio \((k-2)/k\geq 3/5>1/\sqrt{3}\), so Corollary 2 of Burness--Guralnick applies even when the selected prime is \(2\). For \(k=4\), an order \(3\) specialization of their almost simple result handles all nonaffine cases. The three pile case lies below the product action threshold, there we use an explicit large degree classified reduction and three displayed long prime orbits.

\section{Power decks and parity}
\label{sec:power-parity}

We first record the power deck and four pile affine identifications, followed by the parity calculation.

\begin{proposition}[]\label{prop:power-decks}
Let \(k\geq 2\) and let \(n=k^f\). Then
\[
  H_{k,n}\cong C_k^{\,f+1}\rtimes C_{f+1}
  =C_k\wr C_{f+1}.
\]
\end{proposition}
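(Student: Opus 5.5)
Identify \(\Omega_{k,n}\) with \(\mathbb{Z}_k^{\,f+1}\) by writing each card \(x\) in base \(k\) as
\[
  x=\sum_{i=0}^{f} x_i k^i,\qquad 0\le x_i<k,
\]
so \(x\mapsto(x_f,x_{f-1},\ldots,x_0)\). For \(x=an+b\) with \(n=k^f\), the pile coordinate \(a\) is the top digit \(x_f\), and \(b\) is the string of lower digits \((x_{f-1},\ldots,x_0)\).

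In these coordinates the two generators are simple.

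1. The shuffle \(\sigma(an+b)=kb+a\) sends \((x_f,x_{f-1},\ldots,x_0)\) to \((x_{f-1},\ldots,x_0,x_f)\). It is therefore a cyclic rotation \(\pi\) of the \(f+1\) digit positions, of order \(f+1\).
2. The pile cycle \(\rho\) adds \(1\) modulo \(k\) to the top digit \(x_f\) and fixes the others. It is the translation by the unit vector \(e_f\) in \(\mathbb{Z}_k^{\,f+1}\).

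Both maps lie in the wreath product \(W=C_k\wr C_{f+1}\) in product action on \(\mathbb{Z}_k^{\,f+1}\). Here the base group \(T=\mathbb{Z}_k^{\,f+1}\) acts by translations, and the top group \(C_{f+1}\) cyclically permutes coordinates. Hence \(H_{k,n}\le W\).

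For the reverse inclusion, conjugate \(\rho\) by powers of \(\sigma\). The element \(\sigma^{i}\rho\sigma^{-i}\) is translation by \(e_{f-i}\), indices read mod \(f+1\); this is a one-line check that rotating positions moves where the \(+1\) lands. These \(f+1\) translations generate all of \(T\), so \(T\le H_{k,n}\). Together with \(\sigma\), which realises the generator of the top group, this gives \(H_{k,n}=T\rtimes\langle\sigma\rangle=W\).

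It remains to confirm that this semidirect product really is \(C_k^{\,f+1}\rtimes C_{f+1}\) with the stated action, rather than a quotient. The action is faithful because the group is given as permutations. We have \(T\cap\langle\sigma\rangle=1\), since a nontrivial power of \(\sigma\) fixes the point \(0\) while a nontrivial translation fixes no point. Also \(\sigma\) has order exactly \(f+1\): when \(k\ge2\) and \(f\ge1\), the vector \(e_0\) has orbit of length \(f+1\) under rotation. The case \(f=0\) is trivial, since then \(\sigma\) is the identity and \(H_{k,1}=\langle\rho\rangle\cong C_k\). Thus \(|H_{k,n}|=k^{f+1}(f+1)\).

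The only real care needed is the first step: getting the direction of the digit rotation and the \(\bmod k\) carry-free behaviour right. These are immediate once one notes that \(kb+a<k^{f+1}\), so no reduction occurs.
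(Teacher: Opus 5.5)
Your proof is correct and follows the paper's argument step for step: base-\(k\) digit coordinates, \(\sigma\) as a coordinate rotation, conjugates of \(\rho\) generating the regular translation group \(T\), and \(T\cap\langle\sigma\rangle=1\) via the zero word; your explicit check that \(\sigma\) has order exactly \(f+1\) is a point the paper leaves implicit. One harmless slip: since \(\sigma\) sends \(e_j\) to \(e_{j+1}\) (indices mod \(f+1\)), the conjugate \(\sigma^{i}\rho\sigma^{-i}\) is translation by \(e_{i-1}\), whereas it is \(\sigma^{-i}\rho\sigma^{i}\), as in the paper, that translates by \(e_{f-i}\), but either family runs over all unit vectors, so the argument is unaffected.
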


\begin{proof}
Identify the deck with the set of base \(k\) words
\[
  (x_f,x_{f-1},\ldots,x_0)\in(\mathbb Z/k\mathbb Z)^{f+1}.
\]
The shuffle \(\sigma\) cyclically rotates the coordinates, while \(\rho\)
adds \(1\) to the first coordinate. Hence
\[
  \rho_i=\sigma^{-i}\rho\sigma^i,\qquad 0\leq i\leq f,
\]
adds \(1\) independently in the \(i\)th coordinate. The elements \(\rho_i\)
generate a regular translation subgroup \(T\cong C_k^{\,f+1}\). The element \(\sigma\) normalizes \(T\) by cyclically permuting its factors. Every power of \(\sigma\) fixes the zero word, whereas no nonidentity element of \(T\) does, so \(T\cap\langle\sigma\rangle=1\). Therefore
\[
  H_{k,k^f}=T\rtimes\langle\sigma\rangle
  \cong C_k^{\,f+1}\rtimes C_{f+1}.
\]
\end{proof}

\begin{proposition}[]
\label{prop:four-pile-affine}
If \(n=2\cdot4^j\) with \(j\geq0\), then
\[
  H_{4,n}\cong\AGL(2j+3,2)
\]
in its natural action.
\end{proposition}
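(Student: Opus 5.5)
We would work in binary coordinates and realise both generators as affine maps of \(\mathbb F_2^{m}\), where \(m=2j+3\); it then remains to show that they generate all of \(\AGL(m,2)\). The deck has \(4n=2^{m}\) cards, so we write each \(x\in\Omega_{4,n}\) as a bit string \((x_{m-1},\ldots,x_0)\in\mathbb F_2^{m}\). In the pile coordinates \(x=an+b\), the pile label \(a\in\{0,1,2,3\}\) is the top two bits \((x_{m-1},x_{m-2})\), and \(b\) is the low \(m-2=2j+1\) bits. Then \(\sigma(an+b)=4b+a\) moves the low bits up by two places and puts \(a\) at the bottom. So \(\sigma\) is the cyclic rotation of coordinates by two positions, a permutation matrix \(R\). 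Since \(m\) is odd, \(\langle R\rangle\) is the full cyclic group of order \(m\) of coordinate rotations.

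Next we handle \(\rho\), which adds \(1\) to \(a\) modulo \(4\). On the two bits \((a_1,a_0)=(x_{m-1},x_{m-2})\) this is \((a_1,a_0)\mapsto(a_1+a_0,\,a_0+1)\), and every other coordinate is fixed. Hence \(\rho\) is affine, \(\rho(v)=\tau v+e_{m-2}\), with linear part the transvection \(\tau=I+E_{m-1,m-2}\). This gives \(H_{4,n}\le\AGL(m,2)\), acting naturally on \(\mathbb F_2^m\).

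For the translations, \(\rho^2\) adds \(2\) to \(a\), so it is the translation by \(e_{m-1}\). Conjugating by powers of \(\sigma\) moves \(e_{m-1}\) to every basis vector \(e_i\), because \(\langle R\rangle\) is transitive on coordinates. Therefore \(H_{4,n}\) contains the full translation group \(T\cong\mathbb F_2^m\), and \(H_{4,n}=T\rtimes H_0\). Here \(H_0\) is the group generated by the linear parts \(R\) and \(\tau\). The linear parts suffice because \(T\) is normal in \(\AGL(m,2)\), so modulo \(T\) each generator is its linear part.

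It remains to show \(H_0=\GL(m,2)\). Conjugating \(\tau\) by the powers of \(R\) gives the transvections \(I+E_{i+1,i}\) for every \(i\), with indices modulo \(m\), including the wrap-around pair \((0,m-1)\). This is where odd \(m\) is used: rotation by \(2\) reaches every adjacent pair. The commutator identity \([I+E_{ij},I+E_{jl}]=I+E_{il}\) for distinct \(i,j,l\) then lets us induct along the cycle \(0\to1\to\cdots\to m-1\to0\) to obtain every elementary transvection \(I+E_{il}\) with \(i\neq l\). These generate \(\mathrm{SL}(m,2)=\GL(m,2)\). Hence \(H_{4,n}=\AGL(2j+3,2)\) in its natural action.

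We expect the main obstacle to be bookkeeping rather than ideas. We must fix the bit ordering and the rotation direction consistently with the right-to-left composition convention, and check that the conjugates \(R^{i}\tau R^{-i}\) really land on cyclically adjacent pairs in a single consistent direction, so that the commutator chain closes up around the cycle. The degenerate case \(j=0\) (\(m=3\), \(n=2\)) should be checked directly against the same formulas, since there \(b\) is a single bit.
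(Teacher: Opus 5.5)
Your proposal is correct and follows the paper's proof essentially step for step. You realize \(\sigma\) as the coordinate rotation \(e_i\mapsto e_{i+2}\), a full \(m\)-cycle since \(m\) is odd, and write \(\rho\) as translation by \(e_{m-2}\) composed with the transvection \(I+E_{m-1,m-2}\); you then obtain all translations from \(\rho^2=t_{e_{m-1}}\) and its \(\sigma\)-conjugates, and generate \(\GL(m,2)\) from the cyclically adjacent transvections via the commutator identity \([I+E_{ij},I+E_{jl}]=I+E_{il}\).
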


\begin{proof}
Put \(m=2j+3\), so \(4n=2^m\), and identify a position with
\[
  (x_0,\ldots,x_{m-1})\in V:=\mathbb F_2^m,
  \qquad
  x=\sum_{i=0}^{m-1}2^i x_i.
\]
Thus the pile label is \(a=x_{m-2}+2x_{m-1}\).  If \(S\) denotes the linear map induced by \(\sigma\), formula~\eqref{eq:shuffle} gives
\[
  S(e_i)=e_{i+2\pmod m}.
\]
Because \(m\) is odd, this is a full cyclic permutation of the coordinate vectors. Write \(t_v\) for translation by \(v\), and use the convention that \(E_{ij}\) sends \(e_j\) to \(e_i\).  Addition of one modulo \(4\) on the two pile bits shows that
\[
  \rho=t_{e_{m-2}}L,
  \qquad
  L=I+E_{m-1,m-2}.
\]
In particular,
\[
  \rho^2=t_{e_{m-1}}.
\]
Conjugating by powers of \(S\) therefore gives translation by every coordinate vector, so \(H_{4,n}\) contains the full translation group \(T=\{t_v:v\in V\}\). Cancelling the translation part of \(\rho\) now gives \(t_{e_{m-2}}\rho=L\in H_{4,n}\).  The \(S\)-conjugates of \(L\) are all cyclically adjacent transvections
\[
  I+E_{r+1,r}\qquad(r\in\mathbb Z/m\mathbb Z).
\]
Using
\[
  [I+E_{i,j},I+E_{j,k}]=I+E_{i,k}
  \qquad(i,j,k\ \text{distinct})
\]
along directed paths around the coordinate cycle supplies \(I+E_{i,k}\) for every \(i\neq k\). These elementary transvections generate \(\GL(m,2)\). Hence \(\AGL(m,2)\leq H_{4,n}\). Conversely, \(\sigma\) is linear and \(\rho\) is affine, so \(H_{4,n}\leq\AGL(m,2)\), and equality follows.
\end{proof}

\begin{proposition}[]\label{prop:signs}
For all \(k,n\geq 1\),
\begin{align}
  \sgn(\sigma)&=(-1)^{\binom{k}{2}\binom{n}{2}},\label{eq:shuffle-sign}\\
  \sgn(\rho)&=(-1)^{n(k-1)}.\label{eq:pile-cycle-sign}
\end{align}
Consequently, if \(\Alt(kn)\leq H_{k,n}\), then \eqref{eq:generic-classification} holds.
\end{proposition}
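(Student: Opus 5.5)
The plan is to compute both signs by counting inversions (or cycle structure) directly from the pile-coordinate formulas \eqref{eq:shuffle} and \eqref{eq:pile-cycle}, and then read off the parity consequence.

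\textbf{Sign of \(\rho\).} For \eqref{eq:pile-cycle-sign}, note that \(\rho\) fixes the residue \(b\) and acts on the \(k\) positions \(\{an+b:0\le a<k\}\) as a \(k\)-cycle. Hence \(\rho\) is a product of \(n\) disjoint \(k\)-cycles, and
\[
\sgn(\rho)=((-1)^{k-1})^n=(-1)^{n(k-1)}.
\]

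\textbf{Sign of \(\sigma\).} For \eqref{eq:shuffle-sign}, count inversions of \(\sigma\). Take two positions \(x=an+b<x'=a'n+b'\). The shuffle sends them to \(kb+a\) and \(kb'+a'\), and \(x<x'\) means either \(a<a'\), or \(a=a'\) with \(b<b'\).

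\emph{Same pile} (\(a=a'\)). Then \(b<b'\) gives \(kb+a<kb'+a\), so no inversion occurs.

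\emph{Different piles} (\(a<a'\)). The images satisfy \(kb+a>kb'+a'\) exactly when \(b>b'\); equality \(b=b'\) gives no inversion since \(a<a'\), and \(b<b'\) gives none either.

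So inversions correspond bijectively to a choice of a pair of piles \(\{a<a'\}\) together with a pair of residues \(\{b'<b\}\). This gives
\[
\binom{k}{2}\binom{n}{2}\ \text{inversions, hence}\ \sgn(\sigma)=(-1)^{\binom{k}{2}\binom{n}{2}}.
\]
This inversion count is the only step needing any care, and it is a direct check.

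\textbf{Consequence.} Suppose \(\Alt(kn)\le H_{k,n}\). Then \(H_{k,n}\) is either \(\Alt(kn)\) or \(\Sym(kn)\), since \(\Alt(kn)\) has index at most \(2\) in \(\Sym(kn)\). Because \(H_{k,n}=\langle\sigma,\rho\rangle\), it lies in \(\Alt(kn)\) precisely when both generators are even. By the two formulas, this happens exactly when \(n(k-1)\) and \(\binom{k}{2}\binom{n}{2}\) are both even, which is the case distinction in \eqref{eq:generic-classification}. Otherwise some generator is odd, and \(H_{k,n}=\Sym(kn)\).

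(For \(kn\le 1\) the statements are trivial, and the same argument covers them.)
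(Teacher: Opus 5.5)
Your proof is correct and follows essentially the same route as the paper. You count the same inversions for \(\sigma\) (pairs with \(a<a'\) and \(b>b'\)), decompose \(\rho\) into \(n\) disjoint \(k\)-cycles, and note that \(H_{k,n}\leq\Alt(kn)\) exactly when both generators are even; your case check of which pairs are inverted simply spells out what the paper states in one line.
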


\begin{proof}
The map \(\sigma\) changes the ordering of a \(k\times n\) array from pile first order to depth first order. An inversion is exactly a pair
\[
  a<a',\qquad b>b',
\]
so the number of inversions is \(\binom{k}{2}\binom{n}{2}\). This proves \eqref{eq:shuffle-sign}. The permutation \(\rho\) is a product of \(n\) disjoint \(k\)-cycles, which proves \eqref{eq:pile-cycle-sign}. The final assertion is immediate.
\end{proof}

\section{Residual coordinates and a controlled commutator}
\label{sec:residual-coordinates}

Assume henceforth that \(n=k^s t\), where \(s\geq0\) is maximal, \(t>1\), and \(k\nmid t\), and put
\begin{equation}
  d=s+1.
\label{eq:digit-depth}
\end{equation}
Every position has a unique expression
\begin{equation}
  x=At+X,\qquad 0\leq A<k^d,\qquad 0\leq X<t.
\label{eq:digit-residual-coordinate}
\end{equation}
Write the base-\(k\) expansion of \(A\) as
\[
  A=(a_0,a_1,\ldots,a_{d-1})_k
   =\sum_{i=0}^{d-1}a_i k^{d-1-i}.
\]
The first digit \(a_0\) is the pile label. We often write a point as \((a,B,X)\), where \(a=a_0\) and \(B=(a_1,\ldots,a_{d-1})\). For \(0\leq i\leq d\), define
\begin{equation}
  u_i=\sigma^{-i}\rho\sigma^i.
\label{eq:digit-conjugates}
\end{equation}
We first verify the action of the conjugates \(u_i\) for \(i<d\). Write
\[
  A=a_0k^{d-1}+A',
\]
and write
\[
  kX+a_0=qt+R,\qquad 0\leq q<k,\qquad 0\leq R<t.
\]
The definition of \(\sigma\) gives
\[
  \sigma(a_0,a_1,\ldots,a_{d-1},X)
  =(a_1,\ldots,a_{d-1},q,R).
\]
The element \(u_0=\rho\) increments \(a_0\) and fixes all other digits and \(X\). Suppose that \(0\leq i<d-1\) and that \(u_i\) increments the \(i\)th displayed digit in digit residual coordinates. Since
\[
  u_{i+1}=\sigma^{-1}u_i\sigma,
\]
the element \(u_i\), after the first application of \(\sigma\), increments the displayed digit \(a_{i+1}\) and leaves \(q\), \(R\), and all other displayed digits unchanged. The inverse shuffle reconstructs \(a_0\) and \(X\) uniquely from
\[
  kX+a_0=qt+R,
\]
so it leaves them unchanged. It follows by induction that, for every \(i<d\), the element \(u_i\) increments \(a_i\) modulo \(k\) and fixes all other digits and the residual coordinate \(X\). The last conjugate \(u_d\) no longer acts on a stored base \(k\) digit and creates the key residual motion.

\begin{lemma}[]\label{lem:shift-register}
Write
\begin{equation}
  t=km+r,\qquad 1\leq r\leq k-1,
\label{eq:divide-t-by-k}
\end{equation}
and set
\[
  \varepsilon_a=\left\lfloor\frac{a+r}{k}\right\rfloor
  \qquad(0\leq a<k).
\]
Then
\begin{equation}
u_d(a,B,X)=
\bigl(a+r\bmod k,\ B,\ X+m+\varepsilon_a\bmod t\bigr).
\label{eq:ud-action}
\end{equation}
\end{lemma}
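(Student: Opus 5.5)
The plan is to obtain \(u_d\) from \(u_{d-1}\) by one more conjugation, \(u_d=\sigma^{-1}u_{d-1}\sigma\). This uses the inductive description already established: \(u_{d-1}\) increments the last displayed digit \(a_{d-1}\) modulo \(k\) and fixes every other digit and the residual coordinate. So I never have to unwind \(\sigma^d\) in full. I only track one shuffle, one digit increment, and one inverse shuffle, all in the digit--residual coordinates \((a_0,a_1,\ldots,a_{d-1},X)\).

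\textbf{Step 1.} Start from \((a,B,X)\) with \(a=a_0\). Write \(kX+a=qt+R\) with \(0\le q<k\) and \(0\le R<t\). The displayed formula for \(\sigma\) gives \(\sigma(a,B,X)=(a_1,\ldots,a_{d-1},q,R)\).

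\textbf{Step 2.} Apply \(u_{d-1}\). This replaces \(q\) by \(q'=q+1 \bmod k\) and leaves \(a_1,\ldots,a_{d-1}\) and \(R\) unchanged.

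\textbf{Step 3.} Apply \(\sigma^{-1}\). The digits \(a_1,\ldots,a_{d-1}\) shift back into positions \(1,\ldots,d-1\), so \(B\) is restored. The new pair \((a',X')\) is the unique solution with \(0\le a'<k\) and \(0\le X'<t\) of
\[
kX'+a'=q't+R .
\]
(Uniqueness holds because \(q't+R<kt\).)

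\textbf{Step 4.} Solve for \((a',X')\), splitting into two cases.

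If \(q<k-1\), then \(q'=q+1\), and
\[
kX'+a'=kX+a+t=kX+a+km+r.
\]
Reducing modulo \(k\) gives \(a'=a+r \bmod k\). Taking quotients gives \(X'=X+m+\varepsilon_a\), where \(\varepsilon_a=\lfloor (a+r)/k\rfloor\). This value automatically lies in \([0,t)\), so no reduction modulo \(t\) is needed.

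If \(q=k-1\), then \(q'=0\), and
\[
kX'+a'=kX+a-(k-1)t=(kX+a+t)-kt.
\]
Now \(a'\) is again \(a+r \bmod k\), and \(X'=X+m+\varepsilon_a-t\). This is precisely the reduction of \(X+m+\varepsilon_a\) modulo \(t\).

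The two cases together give \eqref{eq:ud-action}.

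The only delicate point is the wraparound case \(q=k-1\). There I must check two things. First, subtracting \(kt\) changes only the quotient by exactly \(t\) and leaves the remainder modulo \(k\) alone. Second, \(X+m+\varepsilon_a\) lies in \([t,2t)\) exactly in this case, so the single reduction modulo \(t\) is correct. Both follow from the bounds \(0\le kX'+a'<kt\) and the uniqueness of the inverse-shuffle decomposition, which the paper already used to show that \(u_i\) fixes \(X\) for \(i<d\).
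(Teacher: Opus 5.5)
Your proposal is correct and follows the paper's proof: both obtain \(u_d=\sigma^{-1}u_{d-1}\sigma\) from one shuffle, observe that incrementing the appended quotient \(q\) replaces \(kX+a\) by \(kX+a+t\) modulo \(kt\), and read off the new pile label and residual coordinate from \(kX+a+t=k(X+m+\varepsilon_a)+(a+r-k\varepsilon_a)\). Your case split on \(q=k-1\) is the explicit form of the paper's remark that when \(kX+a+t\geq kt\), subtracting \(kt\) shifts the quotient by \(t\), which vanishes modulo \(t\).
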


\begin{proof}
Under one application of \(\sigma\), write
\begin{equation}
  kX+a=qt+R,\qquad 0\leq q<k,\qquad 0\leq R<t.
\end{equation}
Then
\begin{equation}
  \sigma(a,B,X)=(B,q,R).
\end{equation}
Since \(u_{d-1}\) increments the final displayed base \(k\) digit \(q\), the conjugate \(u_d=\sigma^{-1}u_{d-1}\sigma\) replaces the residue class of \(kX+a\) modulo \(kt\) by that of \(kX+a+t\). Now
\[
  kX+a+t=k(X+m+\varepsilon_a)+(a+r-k\varepsilon_a).
\]
If this integer is at least \(kt\), subtracting \(kt\) changes the quotient by \(-t\), which disappears modulo \(t\). This gives \eqref{eq:ud-action}.
\end{proof}

We use the commutator convention
\[
  [g,h]=g^{-1}h^{-1}gh.
\]

\begin{proposition}[]\label{prop:gate}
Let
\begin{equation}
  c=[\rho,u_d].
\label{eq:controlled-commutator}
\end{equation}
Then
\begin{equation}
  c(a,B,X)=
  \bigl(a,B,X+\varepsilon_a-\varepsilon_{a+1}\bmod t\bigr),
\label{eq:commutator-action}
\end{equation}
where \(a+1\) is read modulo \(k\). In particular, \(c\) is the identity on all piles except
\[
  k-r-1\qquad\text{and}\qquad k-1,
\]
on which it translates \(X\) by \(-1\) and \(+1\), respectively.
\end{proposition}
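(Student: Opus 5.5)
The plan is to compute \(c=[\rho,u_d]=\rho^{-1}u_d^{-1}\rho\,u_d\) pointwise in digit residual coordinates. Since products are composed right to left, we apply \(u_d\) first, then \(\rho\), then \(u_d^{-1}\), then \(\rho^{-1}\). Two facts are needed.

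\begin{itemize}
\item By the inductive verification preceding Lemma~\ref{lem:shift-register} (the case \(i=0\)), \(\rho=u_0\) increments the pile digit \(a\) modulo \(k\) and fixes \(B\) and \(X\).
\item By Lemma~\ref{lem:shift-register}, \(u_d\) acts by \eqref{eq:ud-action}.
\end{itemize}

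First I record the inverse of \(u_d\). Since \(u_d\) is a permutation, solving \(u_d(a',B,X')=(a'',B,X'')\) gives \(a'=a''-r\bmod k\) and \(X'=X''-m-\varepsilon_{a'}\bmod t\). Then I chase a point through the four factors:
\begin{align*}
(a,B,X)&\xmapsto{u_d}(a+r,\,B,\,X+m+\varepsilon_a)\\
&\xmapsto{\rho}(a+r+1,\,B,\,X+m+\varepsilon_a)\\
&\xmapsto{u_d^{-1}}(a+1,\,B,\,X+\varepsilon_a-\varepsilon_{a+1})\\
&\xmapsto{\rho^{-1}}(a,\,B,\,X+\varepsilon_a-\varepsilon_{a+1}).
\end{align*}
Here all pile labels are read modulo \(k\) and all residual coordinates modulo \(t\). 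In the third step the inverse uses \(\varepsilon_{a+1}\), with \(a+1\) reduced modulo \(k\), because the recovered pile label is \(a+1\). The \(m\) terms cancel. This yields \eqref{eq:commutator-action}.

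Next I evaluate \(\varepsilon_a-\varepsilon_{a+1}\). Because \(0\le a\le k-1\) and \(1\le r\le k-1\), we have \(\varepsilon_a=1\) exactly when \(a\ge k-r\), and \(\varepsilon_a=0\) otherwise. For \(a\le k-2\), the label \(a+1\) is not reduced, and the difference is nonzero only at the threshold \(a=k-r-1\), where it equals \(0-1=-1\). For \(a=k-1\), the next label wraps to \(0\), and the difference is \(\varepsilon_{k-1}-\varepsilon_0=1-0=+1\), using \(r\ge1\) and \(r<k\). The two exceptional piles are distinct because \(r\ge1\) gives \(k-r-1\le k-2\). Thus \(c\) is the identity off these two piles and translates \(X\) by \(\mp1\) on them, as claimed.

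The only delicate point is bookkeeping rather than any real obstacle. One must use the correct composition order and correctly identify the argument of \(\varepsilon\) in the inverse step. One must also check that the wrap-around of \(a+1\) from \(k-1\) to \(0\) is what produces the \(+1\) on pile \(k-1\). The reductions of \(X\) modulo \(t\) are harmless, since Lemma~\ref{lem:shift-register} is already stated modulo \(t\).
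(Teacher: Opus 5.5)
Your proposal is correct and follows the paper's proof essentially step for step: the same right-to-left chase of \(\rho^{-1}u_d^{-1}\rho u_d\) using Lemma~\ref{lem:shift-register}, then reading off \(\varepsilon_a-\varepsilon_{a+1}\) from the threshold \(\varepsilon_a=1\iff a\geq k-r\). Your explicit treatment of the inverse of \(u_d\), the wrap-around at \(a=k-1\), and the distinctness of the two exceptional piles simply spells out what the paper's final sentence leaves implicit.
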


\begin{proof}
Applying the four factors of \(\rho^{-1}u_d^{-1}\rho u_d\) gives
\[
\begin{aligned}
(a,B,X)&\xmapsto{\ u_d\ }(a+r,B,X+m+\varepsilon_a)\\
&\xmapsto{\ \rho\ }(a+r+1,B,X+m+\varepsilon_a)\\
&\xmapsto{\ u_d^{-1}\ }(a+1,B,X+\varepsilon_a-\varepsilon_{a+1})\\
&\xmapsto{\ \rho^{-1}\ }(a,B,X+\varepsilon_a-\varepsilon_{a+1}),
\end{aligned}
\]
with pile labels reduced modulo \(k\) and residual coordinates modulo \(t\). Since \(\varepsilon_a=0\) for \(0\leq a\leq k-r-1\) and \(\varepsilon_a=1\) for \(k-r\leq a\leq k-1\), the difference is nonzero at exactly the two stated piles.
\end{proof}

The supports of the conjugates \(\rho^{-j}c\rho^j\) are the edges of the circulant graph
\begin{equation}
  C(k,r):\qquad a\sim a+r\qquad(a\in\mathbb Z/k\mathbb Z).
\label{eq:support-graph}
\end{equation}
Put
\begin{equation}
  g=\gcd(k,r)=\gcd(k,t),\qquad \lambda=\frac{k}{g}.
\label{eq:support-components}
\end{equation}
Then \(C(k,r)\) is a disjoint union of \(g\) cycles of length \(\lambda\).

\section{A universal digit digraph}
\label{sec:digit-digraph}

For \(0\leq A<k^d\), let
\[
  I_A=\{At,At+1,\ldots,At+t-1\}
\]
be the residual block with base \(k\) word \(A\). Call \(I_A\) \emph{active} if its leading digit is nonzero. Suppose an element of the point stabilizer \((H_{k,n})_0\) translates the residual coordinate by \(1\) or \(-1\) throughout every block in a fixed pile. As soon as an orbit meets such a block, it contains the entire block. The following digit digraph controls how filled blocks propagate under \(\sigma^{-1}\).

\begin{definition}
Let \(D_{k,t}\) be the directed graph with vertex set
\[
  V_k=\{1,2,\ldots,k-1\}\subset\mathbb Z/k\mathbb Z
\]
and an arc \(z\to a\) whenever
\begin{equation}
  a\equiv tz+j\pmod k
  \quad\text{for some }0\leq j<t,\qquad a\neq 0.
\end{equation}
\end{definition}

\begin{lemma}[]\label{lem:digit-connectivity}
For every \(k\geq 2\) and \(t\geq 2\), the digraph \(D_{k,t}\) is strongly connected. Moreover, the vertex \(k-1\) has a loop. If
\[
  M=\lceil\log_t k\rceil,
\]
then its directed diameter is at most \(3M-1\).
\end{lemma}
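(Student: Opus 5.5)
The plan is to recast the arc condition as an interval statement. For a vertex \(z\), the set \(\{tz+j : 0\le j<t\}\) is a run of \(t\) consecutive residues modulo \(k\). The out-neighbourhood of \(z\) is therefore the cyclic window \(W(z)=[tz,\,tz+t-1]\subset\mathbb Z/k\mathbb Z\) with \(0\) removed. The key observation is the tiling identity: if \(J=[c,\,c+L-1]\) is a cyclic interval of \(L\) consecutive residues, then
\[
\bigcup_{z\in J}W(z)=[tc,\,tc+tL-1],
\]
a cyclic interval of length \(tL\), or all of \(\mathbb Z/k\mathbb Z\) once \(tL\geq k\). So images of intervals under ``one step'' are again intervals, \(t\) times longer, except that the non-vertex \(0\) must be deleted.

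The loop is immediate. Taking \(z=k-1\) and \(j=t-1\) gives \(t(k-1)+t-1\equiv -1\), so \(k-1\to k-1\).

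I then prove strong connectivity and the diameter bound by routing every path through the vertex \(k-1\).

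\emph{Out of \(k-1\).} Start from the interval \(\{-1\}\). Its window is \([-t,-1]\), and inductively the set reachable in exactly \(i\) steps contains every nonzero residue of \([-t^i,-1]\). Here the deletion of \(0\) is harmless: the intervals \([-L,-1]\) avoid \(0\) until they wrap all the way around, and the loop lets us pad paths to exact lengths. Since \(t^M\geq k\), every vertex is reached from \(k-1\) in at most \(M\) steps.

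\emph{Into \(k-1\).} From an arbitrary \(z\), iterate the interval map. Let \(J_i\) be a cyclic interval of vertices reachable in \(i\) steps, with \(J_1=W(z)\setminus\{0\}\). If \(0\) lies in the interior or at the right end of the current interval before deletion, then \(-1\) belongs to it, and we have reached \(k-1\). Otherwise either \(0\notin J_i\), and the next interval has length \(t|J_i|\), or \(0\) is the left endpoint. In the latter case we delete it and continue from \([1,L-1]\), whose image \([t,\,tL-1]\) has length \(t(L-1)\geq L\) when \(t\geq2\) and \(L\geq2\).

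One must still treat the degenerate start where \(J_1\) is very short. For example, with \(t=2\), the window \(W(z)\) may be \(\{0,1\}\), leaving \(J_1=\{1\}\). But \(1\to\{t,\ldots,2t-1\}\), which avoids \(0\) when \(2t-1<k\) and otherwise contains \(-1\). Using this, I expect to show that the lengths satisfy \(|J_{i+2}|\geq t\,|J_i|\) at worst. That bound puts \(k-1\) within \(2M-1\) steps of every vertex.

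\emph{Combining.} Concatenating the two legs gives a directed path of length at most \((2M-1)+M=3M-1\) between any two vertices. This proves strong connectivity and the stated diameter bound. When \(t\geq k\) we have \(M=1\), and the argument collapses: every window is the whole of \(\mathbb Z/k\mathbb Z\), so the digraph is complete with loops.

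The main obstacle is the bookkeeping in the ``into \(k-1\)'' leg. There I must check carefully that deleting the residue \(0\) loses at most a factor that is recovered in the following step, so that \(2M-1\) steps always suffice, including the small-\(t\) and short-window boundary cases. The rest is the tiling identity applied mechanically.
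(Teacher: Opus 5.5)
Your proposal is correct. Your out-of-$k-1$ leg is the paper's argument in interval language: the paper reads an $M$-digit base-$t$ word starting from $-1$ and observes that the state after $i$ digits is $z_i-t^i\in[-t^i,-1]$.

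Your into-$k-1$ leg takes a genuinely different route. The paper follows a single path. In the full generalized de Bruijn digraph (vertex $0$ allowed), the states $t^Mx+z$ with $0\le z<t^M$ cover every residue, so some $M$-digit word takes $x$ to $-1$. If this path meets $0$, the paper repairs it at the first such step:
\begin{enumerate}[(a)]
\item if the offending digit is $j>0$, it lowers it to $j-1$, which lands on $-1$;
\item if $j=0$, so that $tp\equiv0$, it splices in the base-$t$ expansion of $k-1$, whose prefixes all lie in $[1,k-1]$.
\end{enumerate}
The total cost is at most $(M-1)+M$.

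You instead track the whole reachable set as a cyclic interval. The paper's route produces explicit digit words, matching the block-propagation language used later, with a single local surgery. Yours avoids the case split on the repair digit and carries more information: the full reachable set at every time, with worst two-step growth $L\mapsto t^2L-t$.

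The bookkeeping you flag as the main obstacle closes quickly, but the recursion must be anchored correctly. Deletions cannot occur at consecutive steps. If $J_i=[1,L-1]$ comes from a deletion, the next pre-deletion interval $[t,tL-1]$ begins at $t\not\equiv0$ (you may assume $t<k$), so it either contains $-1$ or loses nothing. Hence, until $-1$ is reached, two steps take $|J_i|=L$ to at least $\min\{t^2L-1,\,t(tL-1)\}\geq tL$.

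Starting from $J_0=\{z\}$, which contains no $0$, this gives $|J_{2j}|\geq t^j$. The pre-deletion interval at step $2M-1$ therefore has length $t|J_{2M-2}|\geq t^M\geq k$, so it contains $-1$. If you anchor instead at $J_1$, using only $|J_1|\geq t-1$, the case $t=2$ yields $2M$ steps rather than the required $2M-1$.
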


\begin{proof}
We first show that \(k-1\) reaches every vertex. Choose \(M\) minimally so that \(t^M\geq k\). Given \(y\in V_k\), put
\[
  z=t^M-k+y,\qquad 0\leq z<t^M,
\]
and write \(z\) as an \(M\) digit base \(t\) word, allowing leading zeros. Starting at \(k-1\equiv -1\pmod k\) and reading the first \(i\) digits gives a state congruent to
\[
  z_i-t^i\pmod k,
\]
where \(0\leq z_i<t^i\) is the corresponding prefix. For \(i<M\), minimality of \(M\) gives \(t^i<k\), and hence
\[
  -k<z_i-t^i<0.
\]
Thus no intermediate state is \(0\). At \(i=M\), the state is \(z-t^M\equiv y\pmod k\). Therefore \(k-1\) reaches \(y\) inside \(D_{k,t}\). Conversely, fix \(x\in V_k\). For the same minimal \(M\), after reading an \(M\) digit word with numerical value \(z\in[0,t^M-1]\), the resulting state in the full generalized de Bruijn digraph is
\[
  t^M x+z\pmod k.
\]
Since an interval of \(t^M\) consecutive integers contains a complete residue system modulo \(k\), some word takes \(x\) to \(k-1\). If its path avoids \(0\), it is a path in \(D_{k,t}\). Otherwise, consider the first step into \(0\). Let \(p\neq 0\) be the preceding state and let the digit used be \(j\), so \(tp+j\equiv 0\pmod k\). If \(j>0\), replace \(j\) by \(j-1\) and stop. The new state is \(k-1\). If \(j=0\), write the ordinary base \(t\) expansion of \(k-1\) as \(a_1a_2\cdots a_e\), with \(a_1\neq 0\). Replace the digit \(0\) by \(a_1\) and then read \(a_2,\ldots,a_e\). Since \(tp\equiv 0\pmod k\), the successive states are the positive integer prefixes of the base \(t\) expansion of \(k-1\). They all lie in \(V_k\), and the final state is \(k-1\). Hence every vertex reaches \(k-1\), proving strong connectivity. Finally,
\[
  t(k-1)+(t-1)\equiv k-1\pmod k,
\]
so \(k-1\) has a loop. The first construction gives a path of length \(M\) from \(k-1\) to every vertex. In the reverse direction, a path avoiding \(0\) has length \(M\). If the first attempted step into \(0\) is repaired as above, its length is at most \((M-1)+M=2M-1\). Hence any ordered pair of vertices can be joined via \(k-1\) by a path of length at most
\[
  (2M-1)+M=3M-1.
\]
\end{proof}

We now translate this graph statement into a permutation group statement.

\begin{lemma}[]\label{lem:backward-block}
Let a block word be \(w_0w_1\cdots w_{d-1}\). Under \(\sigma^{-1}\) it can move to
\begin{equation}
  aw_0w_1\cdots w_{d-2},\qquad
  a\equiv tw_{d-1}+X\pmod k,
\label{eq:backward-block-move}
\end{equation}
where \(X\) is the residual coordinate chosen in the source block. Consequently, at the filled block level, where a residual gate is used after each move to reselect \(X\) in the newly filled active block, one complete sequence of \(d\) backward moves can replace each digit independently by an out neighbor in \(D_{k,t}\).
\end{lemma}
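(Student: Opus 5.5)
We compute \(\sigma^{-1}\) directly in the digit--residual coordinates of \eqref{eq:digit-residual-coordinate} and then iterate. The formula displayed just before Lemma~\ref{lem:shift-register} gives
\[
  \sigma(a_0,a_1,\ldots,a_{d-1},X)=(a_1,\ldots,a_{d-1},q,R),\qquad kX+a_0=qt+R .
\]
So a point \((w_0,\ldots,w_{d-1},Y)\) in the block \(I_{w_0\cdots w_{d-1}}\) is the image of the point \((a,w_0,\ldots,w_{d-2},X')\) determined by
\[
  kX'+a=w_{d-1}t+Y,\qquad 0\le a<k,\ 0\le X'<t .
\]
This is the unique division of \(w_{d-1}t+Y\in[0,kt)\) by \(k\). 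Hence \(\sigma^{-1}\) sends the source point into the block with word \(aw_0\cdots w_{d-2}\), where
\[
  a\equiv tw_{d-1}+Y \pmod k .
\]
Letting the residual coordinate (called \(X\) in the statement) range over \(0\le X<t\), the reachable leading digits are exactly the residues \(tw_{d-1}+j\) with \(0\le j<t\). This proves \eqref{eq:backward-block-move}.

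For the consequence, work at the level of filled blocks. Suppose an orbit of the stabiliser-type subgroup contains the whole block \(I_{w_0\cdots w_{d-1}}\), and that block is active. By the preceding paragraph, \(\sigma^{-1}\) applied to its points reaches a block \(aw_0\cdots w_{d-2}\) for every out-neighbour \(a\) of \(w_{d-1}\) in \(D_{k,t}\). Choose the residual coordinate \(X\) in the source block so that \(tw_{d-1}+X\equiv a\). Because \(a\neq0\) by the definition of \(D_{k,t}\), the new block is active. The hypothesised residual gate translates \(X\) by \(\pm1\) on every block of a fixed pile, and its conjugates by \(\rho\) and the \(u_i\) cover all nonzero piles. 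Applying this gate fills the entire new block, so the residual coordinate can again be chosen freely at the next step.

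After \(d\) such moves the original word has been shifted completely out of the block word. The new word is \(a^{(d)}a^{(d-1)}\cdots a^{(1)}\), where \(a^{(1)}\) is an out-neighbour of \(w_{d-1}\), and \(a^{(i)}\) is an out-neighbour of whatever occupied the last position at step \(i\). I will track positions carefully and check that the digit in each final position is an out-neighbour of the corresponding original digit, after reindexing along the cyclic shift. Each choice is made independently, since \(X\) is reselected after every move. So each digit is replaced independently by an out-neighbour in \(D_{k,t}\).

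The main obstacle is this bookkeeping, together with keeping every intermediate block active so that the gate actually applies. The \(d\) new leading digits must each be nonzero, which is guaranteed because arcs of \(D_{k,t}\) never end at \(0\). The correspondence between old and new digit positions must also be consistent, which I will verify by induction on the number of moves. The loop at \(k-1\) from Lemma~\ref{lem:digit-connectivity} is not needed here; it is used only later.
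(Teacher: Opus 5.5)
Your argument is correct and is essentially the paper's. You obtain $\sigma^{-1}$ by inverting the forward shift-register formula instead of quoting $\sigma^{-1}(y)=(y\bmod k)n+\lfloor y/k\rfloor$, and then iterate $d$ times. The bookkeeping you defer is immediate: position $j$ of the final word holds the digit chosen at move $d-j$, which is an out-neighbour of $w_j$, so no cyclic reindexing is involved.

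Remove the claim that conjugates of a gate by $\rho$ and the $u_i$ supply gates on all nonzero piles. Such conjugates need not fix $0$, which is exactly why pile $h$ needs separate treatment in Section~\ref{sec:antipodal}. The lemma simply presupposes a gate on each active pile, as in Proposition~\ref{prop:full-gate}.
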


\begin{proof}
For \(y\in\Omega_{k,n}\),
\begin{equation}
  \sigma^{-1}(y)=(y\bmod k)n+\left\lfloor\frac{y}{k}\right\rfloor.
\label{eq:inverse-shuffle}
\end{equation}
Write \(A=kC+z\) and \(y=At+X\). Then
\[
  \left\lfloor\frac{At+X}{k}\right\rfloor
  =Ct+\left\lfloor\frac{zt+X}{k}\right\rfloor,
\]
and the final quotient lies in \([0,t-1]\). The new pile label is \(y\bmod k\equiv tz+X\pmod k\), which proves \eqref{eq:backward-block-move}. Iterating \(d\) times processes each digit once and restores the original digit order.
\end{proof}

\begin{proposition}[]\label{prop:full-gate}
Assume that for every pile label \(a\in\{1,\ldots,k-1\}\), the stabilizer \((H_{k,n})_0\) contains an element that acts on every residual block in pile \(a\) as \(X\mapsto X+1\) or \(X\mapsto X-1\), with a fixed choice of sign for that gate. Then \(H_{k,n}\) is \(2\)-transitive.
\end{proposition}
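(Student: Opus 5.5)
Since \(H_{k,n}\) is transitive (it contains \(\rho\) and \(\sigma\), and already the \(u_i\) together with the residual moves reach every block), it suffices to show that the stabilizer \(G_0:=(H_{k,n})_0\) is transitive on \(\Omega_{k,n}\setminus\{0\}\). Let \(O\) be the \(G_0\)-orbit of some point \(y\neq 0\). The plan is to show that \(O\) contains every nonzero point, in three stages.

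\emph{First stage: one active block.} Every element \(\sigma\) and every gate from the hypothesis fix \(0\). Moreover \(\sigma^{-1}\) fixes \(0\) as well, since \(\sigma(0)=0\). So all of these lie in \(G_0\). Begin with \(y\). If \(y\) already lies in an active block (leading digit \(\neq0\)), the gate for its pile sweeps \(X\) through all residues mod \(t\), so \(O\supseteq I_A\) for that block. If \(y\) lies in pile \(0\), apply powers of \(\sigma^{-1}\) instead. By \eqref{eq:inverse-shuffle}, \(y\) eventually reaches a nonzero pile, because the only point whose \(\sigma\)-orbit stays in pile \(0\) forever is \(0\) itself. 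Indeed \(\sigma^{-1}\) moves the point downward unless \(y\equiv 0\pmod k\), and iterated division by \(k\) ends at \(0\). So \(O\) meets some active block and therefore contains it entirely.

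\emph{Second stage: propagation.} Apply Lemma~\ref{lem:backward-block} to the filled block \(I_A\). For each choice of \(X\in I_A\), \(\sigma^{-1}\) sends the point to a block whose new leading digit is \(tw_{d-1}+X \bmod k\). Since \(X\) ranges over all of \(0,\dots,t-1\), every out-neighbour \(a\) of \(w_{d-1}\) in \(D_{k,t}\) is realized. The target block is active because \(a\neq0\), so the gate fills it. Iterating \(d\) moves, each digit of the word can be replaced independently by an out-neighbour.

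Two facts then give reachability:
\begin{itemize}
\item[(a)] Lemma~\ref{lem:digit-connectivity} makes \(D_{k,t}\) strongly connected, and the loop at \(k-1\) lets us pad path lengths to a common value.
\item[(b)] Hence, after \(L\cdot d\) moves for a suitable \(L\), every word \(w\) with all digits in \(V_k\) is reachable.
\end{itemize}
This step needs some care. The digits are processed cyclically, each digit advancing one step per round. So we take all digit-walks of the same length \(L\), which is possible because from any vertex we can walk to \(k-1\), loop there, and then walk to the target within a bounded length. The conclusion is that \(O\) contains every block \(I_A\) all of whose digits are nonzero.

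\emph{Third stage: the remaining points.} Some points have nonzero leading digit but zero inner digits, and some lie in pile \(0\). I would handle these using the zero vertex in the full generalized de Bruijn digraph, arriving at them by a final move from a filled block. Taking \(X=0\) in the source block allows a step into digit \(a\equiv tw_{d-1}\), which may be \(0\). More generally, every \(a\equiv tz+j\) with \(0\le j<t\) is available, including \(a=0\), provided the source block is filled. So every block, active or not, is reached as the image of a filled active block under \(\sigma^{-1}\)-moves, as long as the path is active at each stage where a gate is needed.

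For a target word containing zeros, plan the walk so that all intermediate leading digits are nonzero, and insert the zero digits only during the last \(d\) moves. Those last moves do not require further gate use, since the entire source block is already filled and its whole image is in \(O\). The block \(I_0\), which contains \(0\), is the one exception: there we only need \(I_0\setminus\{0\}\). These points are images of filled blocks under \(\sigma^{-1}\), with care taken that the point \(0\) never arises except from \(0\).

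\emph{Main obstacle.} I expect the hardest part to be the bookkeeping in the second and third stages: showing that the inactive blocks, especially those deep in pile \(0\), are fully covered once gates are no longer available. The strategy of filling the final \(d\) moves without gates is the first thing I would try to make rigorous.
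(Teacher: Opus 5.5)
Your first two stages follow the paper's route: seed one active block, then propagate filled blocks backward through \(D_{k,t}\), padding path lengths with the loop at \(k-1\). One small omission: your seed block may contain zero digits, which are not vertices of \(D_{k,t}\). Clear them first; for example, a backward move with \(X=1\) turns an outgoing digit \(0\) into the new leading digit \(1\). The paper starts from \(\sigma^{-1}(1)=n\), in block \(10^{d-1}\), and reaches \(1^d\) this way. Also, transitivity of \(H_{k,n}\) should be derived at the end from \(\rho(0)=n\neq0\), not asserted at the start.

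The genuine gap is your third stage. You claim the last \(d\) moves need no gates because ``the entire source block is already filled and its whole image is in \(O\)''. This does not produce filled blocks. The map \(\sigma^{-1}\) scatters \(I_A\) over the blocks \(a\,w_0\cdots w_{d-2}\), and each of these receives only the residuals \(\lfloor (tw_{d-1}+X)/k\rfloor\) with \(X\) in a single class modulo \(k\). That is at most \(\lceil t/k\rceil<t\) of its points. Once a backward move prepends a \(0\), that block is inactive and only partly in \(O\). No gate can refill it, so later moves propagate only fragments. Moreover, backward moves always insert the new digit at the front. So you cannot reach an active block with inner zeros while keeping every intermediate block active.

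The missing idea is to switch direction. The map \(\sigma\) drops the leading digit and \emph{appends} the quotient \(q\) of \(kX+a=qt+R\). Because \([qt,(q+1)t-1]\) contains an integer not divisible by \(k\), every \(q\in\{0,\ldots,k-1\}\) can be appended from some nonzero leading digit \(a\). Start from a suitable filled all-nonzero word \(a_0\cdots a_{d-1}\) and append \(z_0,\ldots,z_{d-1}\). The intermediate leading digits are \(a_1,\ldots,a_{d-1},z_0\), all nonzero, so every active block \(z_0\cdots z_{d-1}\) is gate-filled.

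Pile \(0\) then needs no gate-free filling at all. The orbit \(O\) is \(\sigma\)-invariant, and your own Stage 1 observation shows that every nonzero \(x\) in pile \(0\) has some \(\sigma^{-i}(x)\) in a nonzero pile. Hence \(x\in O\). The paper writes this as \(x=k^jy=\sigma^j(y)\) with \(k\nmid y\).
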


\begin{proof}
Let \(L\leq (H_{k,n})_0\) be generated by \(\sigma\) and one such residual gate for each nonzero pile, and put \(\mathcal O=1^L\). If \(\mathcal O\) meets an active block, then the relevant gate fills the entire block. Since
\[
  \sigma^{-1}(1)=n=k^{d-1}t,
\]
the orbit \(\mathcal O\) fills the block with word
\begin{equation}
  10^{d-1}.
\end{equation}
Applying \(d-1\) backward moves and choosing \(X=1\) whenever the outgoing digit is \(0\) produces the filled all nonzero block \(1^d\). For each \(z\in V_k\), strong connectivity in Lemma~\ref{lem:digit-connectivity} supplies a directed path \(P_z\) from \(1\) to \(z\) that passes through \(k-1\). Let \(\ell_z\) be its length and put
\[
  E=\max_{z\in V_k}\ell_z.
\]
Since \(k-1\) has a loop, inserting \(E-\ell_z\) copies of this loop at the visit to \(k-1\) extends every \(P_z\) to a path of the common length \(E\). Now fix an all nonzero target word \(z_0z_1\cdots z_{d-1}\), and assign \(P_{z_i}\) to its \(i\)th coordinate. Starting from the filled block \(1^d\), perform \(E\) complete backward sweeps. By Lemma~\ref{lem:backward-block}, the \(j\)th sweep replaces every coordinate digit by the next vertex on its assigned path and restores the digit order after \(d\) moves. At every individual move, the new leading digit is a vertex of \(D_{k,t}\), hence is nonzero. The block encountered is therefore active and can be filled before the next move. After \(E\) sweeps the target block is filled. Thus every all nonzero block is filled. It remains to fill an arbitrary active block with word
\[
  z_0z_1\cdots z_{d-1},\qquad z_0\neq 0.
\]
A forward application of \(\sigma\) drops the leading digit \(a\) and appends the quotient \(q\) in
\begin{equation}
  kX+a=qt+R,\qquad 0\leq R<t.
\end{equation}
For each prescribed \(q\in\{0,\ldots,k-1\}\), the interval \([qt,(q+1)t-1]\) contains two consecutive integers and hence an integer not divisible by \(k\). Write such an integer as \(kX+a\) with \(1\leq a\leq k-1\) and \(0\leq X<t\). Thus every desired quotient can be appended from some nonzero leading digit. Choose an all nonzero source word \(a_0\cdots a_{d-1}\) so that, on the \(j\)th forward move, the leading digit \(a_j\) permits the appended digit \(z_j\). The intermediate leading digits are \(a_1,\ldots,a_{d-1},z_0\), all nonzero. Therefore every intermediate block is active and can be filled before the next move. After \(d\) moves the target block is filled. Hence \(\mathcal O\) contains every point in every nonzero pile. Finally, let \(0<x<n\). Write
\[
  x=k^j y,\qquad k\nmid y.
\]
By \eqref{eq:inverse-shuffle}, \(\sigma^{-1}(y)\) lies in the nonzero pile \(y\bmod k\), so it belongs to \(\mathcal O\). Thus \(y\in\mathcal O\). While a point \(z<n\) lies in pile \(0\), \eqref{eq:shuffle} gives \(\sigma(z)=kz\), and hence
\[
  y,ky,\ldots,k^j y=x\in\mathcal O.
\]
Therefore
\[
  1^L=\Omega_{k,n}\setminus\{0\},
\]
so \(L\), and hence \((H_{k,n})_0\), is transitive on the points different from \(0\). Moreover,
\[
  \rho(0)=n\neq0.
\]
For any \(y\neq0\), choose \(\ell\in L\) with \(\ell(n)=y\). Then \(\ell\rho\) sends \(0\) to \(y\), so \(H_{k,n}\) is transitive on \(\Omega_{k,n}\).

Finally, let \((x_1,x_2)\) and \((y_1,y_2)\) be ordered pairs of distinct points. Choose \(g,h\in H_{k,n}\) with
\[
  g(x_1)=0,\qquad h(y_1)=0.
\]
Then \(g(x_2)\) and \(h(y_2)\) are both nonzero. Since \((H_{k,n})_0\) is transitive on the nonzero points, there is an element \(v\in(H_{k,n})_0\) such that
\[
  v(g(x_2))=h(y_2).
\]
The element \(h^{-1}vg\) sends \(x_1\) to \(y_1\) and \(x_2\) to \(y_2\). Hence \(H_{k,n}\) is \(2\)-transitive.
\end{proof}

\section{The residual cycle case}
\label{sec:residual-cycle}

\begin{proof}[Proof of Theorem~\ref{thm:residual-cycle}]
Assume \(\lambda\geq 3\). The support graph \(C(k,r)\) from \eqref{eq:support-graph} is a union of cycles of length at least \(3\). Every nonzero vertex is incident with an edge that avoids \(0\). Indeed, this is automatic on cycles not containing \(0\), while deleting \(0\) from its own cycle leaves a path through all remaining vertices of that cycle. Each edge of \(C(k,r)\) is the support of a conjugate of \(c\), and on its two incident piles that conjugate translates the residual coordinate by \(1\) and \(-1\). If the edge avoids \(0\), the conjugate fixes \(0\). Therefore every nonzero pile has a residual unit gate in \((H_{k,n})_0\). Proposition~\ref{prop:full-gate} proves that \(H_{k,n}\) is \(2\)-transitive.
\end{proof}

\section{The antipodal support case}
\label{sec:antipodal}

We now assume
\begin{equation}
  k=2h,\qquad \gcd(k,t)=h.
\label{eq:antipodal-hypothesis}
\end{equation}
Then
\begin{equation}
  t=hu
\label{eq:antipodal-residual}
\end{equation}
for an odd positive integer \(u\). Write \(u=2m+1\). In Lemma~\ref{lem:shift-register}, the remainder is \(r=h\), and
\[
  \varepsilon_a=
  \begin{cases}
  0,&0\leq a<h,\\
  1,&h\leq a<2h.
  \end{cases}
\]

Define
\begin{equation}
  \eta=\rho^{-h}u_d,\qquad
  \eta^*=\rho^h\eta\rho^{-h}.
\label{eq:eta-definitions}
\end{equation}
Both elements preserve every pile. Directly from \eqref{eq:ud-action},
\[
\eta(a,B,X)=
\begin{cases}
(a,B,X+m),&0\leq a<h,\\
(a,B,X+m+1),&h\leq a<2h,
\end{cases}
\]
\[
\eta^*(a,B,X)=
\begin{cases}
(a,B,X+m+1),&0\leq a<h,\\
(a,B,X+m),&h\leq a<2h.
\end{cases}
\]
Consequently the element
\begin{equation}
  \theta=\eta^{m+1}(\eta^*)^{-m}
\label{eq:theta-definition}
\end{equation}
acts as
\begin{equation}
\theta(a,B,X)=
\begin{cases}
(a,B,X),&0\leq a<h,\\
(a,B,X+u),&h\leq a<2h.
\end{cases}
\label{eq:theta-action}
\end{equation}
In particular, \(\theta\in(H_{k,n})_0\).

The conjugates of \(c\) now have supports
\[
  \{a,a+h\},\qquad a\in\mathbb Z/2h\mathbb Z.
\]
All such antipodal pairs except \(\{0,h\}\) avoid \(0\). Thus every pile label other than \(0\) and \(h\) has a residual unit gate in the point stabilizer.

\begin{proposition}[]\label{prop:antipodal}
Assume \eqref{eq:antipodal-hypothesis} and \(h\geq 2\). Then \(H_{k,n}\) is \(2\)-transitive.
\end{proposition}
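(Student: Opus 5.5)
The plan is to reduce to Proposition~\ref{prop:full-gate}, or rather to rerun its orbit argument with one pile handled by hand. By the discussion preceding the statement, the stabilizer \((H_{k,n})_0\) already contains a residual unit gate for every pile label \(a\notin\{0,h\}\). These are the conjugates \(c_a:=\rho^{-a}c\rho^{a}\) (up to relabelling) with support \(\{a,a+h\}\), \(a\neq 0,h\). The only missing gate is on pile \(h\). In its place we have \(\theta\) from \eqref{eq:theta-action}, which translates by \(u\) on piles \(h,\ldots,2h-1\) and fixes piles \(0,\ldots,h-1\) pointwise.

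First I would clean up \(\theta\). Multiplying \(\theta\) by \(c_a^{\mp u}\) for \(a=1,\ldots,h-1\) cancels its action on piles \(h+1,\ldots,2h-1\). The correction only disturbs piles \(1,\ldots,h-1\), by multiples of \(u\). This gives \(\theta'\in(H_{k,n})_0\) acting on pile \(h\) as \(X\mapsto X+u\) and fixing pile \(0\). If \(u=1\), then \(\theta'\) is itself a unit gate on pile \(h\), and Proposition~\ref{prop:full-gate} finishes. So assume \(u\geq3\).

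In general, let \(L\) be generated by \(\sigma\), the gates \(c_a\) for \(a\neq 0,h\), and \(\theta'\), and set \(\mathcal O=1^L\). I would redo the proof of Proposition~\ref{prop:full-gate} with ``filled'' meaning fully filled for active blocks of pile \(a\neq h\). For blocks of pile \(h\), filled is only known to be a union of cosets of \(u\mathbb Z/t\mathbb Z\).

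The key new step is to show that every pile-\(h\) block is actually entirely filled. I would do this using the forward move rather than the digit digraph. A pile-\(h\) block with word \(hB'\) is the \(\sigma\)-image of blocks \(aB'\) with \(a\) arbitrary and appended digit \(q\). The new residual is \(R=kX+a-qt\). So if all blocks \(aB'\) with \(a\notin\{0,h\}\) are already filled, the target pile-\(h\) block receives every residual \(R\) with
\[
R\equiv a-qt\pmod{2h},\qquad a\notin\{0,h\}.
\]
That is, it receives every residue class mod \(2h\) except \(-qt\) and \(h-qt\). Then \(\theta'\) shifts residuals by the odd number \(u\). Because the two missing classes differ by \(h\), while \(u\) is odd and \(t=hu\), one checks that a \(u\)-translate of a missing residual lies in a class already hit. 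This is exactly where \(h=2\) needs the parity observation: the classes \(\{c,c+2\}\bmod 4\) are moved to \(\{c+1,c+3\}\). After this, every pile-\(h\) block reached this way is full.

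So the ordering is as follows. (1) Fill all active blocks whose digits avoid \(h\). For this use sweeps as in Proposition~\ref{prop:full-gate}, with paths in \(D_{k,t}\) chosen to avoid the vertex \(h\), or else route through pile \(h\) only via the forward-move argument above. (2) Fill blocks with digit \(h\) in any position by induction on the number of such digits, using the forward-move step each time an \(h\) becomes leading. (3) Conclude transitivity of \((H_{k,n})_0\) on nonzero points, and hence \(2\)-transitivity, exactly as at the end of Proposition~\ref{prop:full-gate}.

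I expect step (1) to be the main obstacle. Strong connectivity of \(D_{k,t}\) after deleting \(h\) is not supplied by Lemma~\ref{lem:digit-connectivity}. My first attempt would avoid that issue altogether by allowing passage through pile-\(h\) blocks, using the completion argument of the key step whenever the sweep lands there, so that Lemma~\ref{lem:digit-connectivity} is used unchanged.
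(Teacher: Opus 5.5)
\textbf{There are two genuine gaps: the pile-\(h\) completion step fails when \(h\mid u\), and step (1), which the completion step depends on, is not established by your plan.}

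\textbf{First gap: the completion step for pile \(h\).} Since \(h\mid t\) and \(u\) is odd, \(qt\equiv qh\pmod{2h}\). So one forward move from safe blocks delivers to a pile-\(h\) block exactly the residuals with \(R\not\equiv0\pmod h\). On pile \(h\), \(\theta\) moves \(R\) only to \(R+ju\bmod t\). Because \(h\mid t\), these values stay \(\equiv0\pmod h\) for every \(j\) whenever \(h\mid u\), equivalently \(u\equiv h\pmod{2h}\). This is exactly the case your mod-\(2h\) check misses, and it occurs for odd \(h\geq3\).

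Take \(k=6\), \(n=t=9\), \(h=u=3\). The pile-\(3\) points \(27,30,33\) form a single \(\theta\)-orbit. Their \(\sigma\)-preimages \(31,5,32\) lie in piles \(3,0,3\), so no safe block ever feeds this orbit.

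The missing idea is to look forward rather than backward. The map \(\sigma^d\) sends \((h,B,x+ju)\) into a block whose leading digit is the first appended quotient
\(q_j=\lfloor(2x+1)/u\rfloor+2j\). These are \(h\) distinct digits, so for \(h\geq3\) some \(q_j\) lies in \(S=\{1,\ldots,2h-1\}\setminus\{h\}\). Pulling back by \(\sigma^{-d}\) and applying powers of \(\theta\) then fills the whole class; in the example, \(\sigma(30)=21\) lies in pile \(2\). Your one-step-preimage-plus-\(\theta\) argument is what the paper uses for \(h=2\), and it works whenever \(h\nmid u\). It cannot cover the case \(h\mid u\).

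\textbf{Second gap: step (1), which you leave open.} Your ordering does not produce what the key step needs. For \(d\geq2\), the \(\sigma\)-sources of a pile-\(h\) block \(hz_1\cdots z_{d-1}\) are the blocks \(a\,h\,z_1\cdots z_{d-2}\), not \(aB'\). These contain the digit \(h\), and when \(z_{d-1}\neq h\) they contain as many \(h\)'s as the target. So neither filling blocks whose digits avoid \(h\), nor induction on the number of \(h\)'s with the forward step used only when \(h\) becomes leading, reaches them. Backward sweeps through \(D_{k,t}\) also stall at pile-\(h\) blocks that are only partially filled, because the next move needs a freely chosen residual \(X\).

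The missing observation is that \(u\geq3\) forces \(t=hu>2h=k\). Hence every window \([qt,(q+1)t-1]\) contains an integer \(\equiv1\pmod k\). Starting from the filled block \(1^d\), \(d\) forward moves can append arbitrary digits \(z_0,\ldots,z_{d-1}\) while every intermediate leading digit is \(1\). This fills every block with leading digit in \(S\), whatever its lower digits, including \(0\) and \(h\). It bypasses the digit digraph entirely and supplies exactly the input both pile-\(h\) arguments need.
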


\begin{proof}
If \(u=1\), then \eqref{eq:theta-action} is a residual unit translation on every pile in the second half, including pile \(h\). Together with the safe conjugates of \(c\), this supplies a unit gate on every nonzero pile. Proposition~\ref{prop:full-gate} applies.

Assume henceforth that \(u\geq 3\). Then
\begin{equation}
  t=hu\geq 3h>2h=k.
\label{eq:antipodal-large-residual}
\end{equation}
Let
\[
  S=\{1,2,\ldots,2h-1\}\setminus\{h\}.
\]
Call a block \emph{safe} if its leading digit belongs to \(S\). Every safe block has a residual unit gate arising from a conjugate of \(c\) whose support avoids \(0\). Let \(L\leq(H_{k,n})_0\) be generated by \(\sigma\), all those safe conjugates of \(c\), and \(\theta\), and put \(\mathcal O=1^L\). As before, \(\mathcal O\) fills the seed block \(10^{d-1}\). By \(d-1\) backward moves with residual choice \(X=1\), it fills \(1^d\). We claim that every safe block is filled. Let its word be \(z_0z_1\cdots z_{d-1}\), with \(z_0\in S\). Starting at the filled block \(1^d\), apply \(d\) forward moves, successively appending \(z_0,z_1,\ldots,z_{d-1}\). To append a prescribed quotient \(q\), we need an integer \(kX+1\) in \([qt,(q+1)t-1]\). By \eqref{eq:antipodal-large-residual}, every interval of \(t\) consecutive integers contains a representative of the residue class \(1\pmod k\), so such an \(X\in[0,t-1]\) exists. During the first \(d-1\) moves the leading digit is \(1\), and after the last move it is \(z_0\). Hence every intermediate block is safe and can be filled before the next move. This proves the claim. Suppose first that \(h\geq 3\). Fix a lower digit word \(B\) and a residue class \(x\pmod u\), with \(0\leq x<u\). The orbit of \((h,B,x)\) under \(\theta\) consists of
\begin{equation}
  (h,B,x+ju),\qquad 0\leq j<h.
\label{eq:theta-residue-orbit}
\end{equation}
Write \(B=(b_1,\ldots,b_{d-1})\). On the first application of \(\sigma\), the base \(k\) word changes from
\[
  hb_1\cdots b_{d-1}
\]
to
\[
  b_1\cdots b_{d-1}q_j,
\]
where the appended quotient is
\begin{equation}
\begin{aligned}
q_j&=\left\lfloor\frac{k(x+ju)+h}{t}\right\rfloor\\
&=\left\lfloor\frac{2x+1}{u}\right\rfloor+2j.
\end{aligned}
\label{eq:antipodal-quotient}
\end{equation}
Each subsequent application of \(\sigma\) removes the current leading digit, shifts every remaining digit one place to the left, and appends a new quotient. Consequently, after exactly \(d\) applications, the first appended quotient \(q_j\) is the leading digit. The initial term in \eqref{eq:antipodal-quotient} is either \(0\) or \(1\), and the \(h\) values \(q_0,\ldots,q_{h-1}\) are distinct elements of \(\{0,1,\ldots,2h-1\}\). Since \(h\geq 3\), they cannot all belong to the two-element set \(\{0,h\}\). Choose \(j\) with \(q_j\in S\). Then \(\sigma^d(h,B,x+ju)\) lies in a safe block, hence belongs to \(\mathcal O\). Applying \(\sigma^{-d}\) shows that \((h,B,x+ju)\in\mathcal O\), and powers of \(\theta\) fill the entire class \eqref{eq:theta-residue-orbit}. Varying \(x\) fills every block in pile \(h\). It remains to treat \(h=2\), so \(k=4\), \(t=2u\), and \(S=\{1,3\}\). Let \(z_0z_1\cdots z_{d-1}\) be a block word with \(z_0=2\), and let \(Y\in\{0,1,\ldots,t-1\}\) be odd. Put \(q=z_{d-1}\) and \(M=qt+Y\). Since \(M\) is odd, there is a unique \(a\in\{1,3\}\) with \(M\equiv a\pmod 4\). Set
\begin{equation}
  X=\frac{M-a}{4}.
\end{equation}
Since \(M\) is a positive odd integer, the selected \(a\) is its least positive residue modulo \(4\), and hence \(a\leq M\). Moreover, \(q\leq3\) and \(Y\leq t-1\), so
\[
  M=qt+Y\leq4t-1.
\]
Consequently,
\[
  0\leq M-a<4t,
\]
and therefore \(0\leq X<t\). In digit residual notation, and with the evident interpretation when \(d=1\), the shift register formula gives
\begin{equation}
\sigma(a,z_0,z_1,\ldots,z_{d-2},X)
=(z_0,z_1,\ldots,z_{d-1},Y).
\end{equation}
The source block is safe, so the target point belongs to \(\mathcal O\). Thus every point of pile \(2\) with odd residual coordinate is in \(\mathcal O\). On pile \(2\), the element \(\theta\) adds \(u\) to the residual coordinate. Since \(u\) is odd and \(t=2u\) is even, this operation interchanges residual parity. Hence every point in pile \(2\) belongs to \(\mathcal O\). In either case, \(\mathcal O\) contains every point in every nonzero pile. The final pile \(0\) argument in the proof of Proposition~\ref{prop:full-gate} applies verbatim, and \(H_{k,n}\) is \(2\)-transitive.
\end{proof}

\begin{proof}[Proof of Theorem~\ref{thm:two-transitive}]
Write \(n=k^s t\) with \(t>1\). Since \(k\nmid t\), the integer \(\lambda=k/\gcd(k,t)\) is at least \(2\). If \(\lambda\geq 3\), apply Theorem~\ref{thm:residual-cycle}. If \(\lambda=2\), then \(k\) is even; as \(k\geq 3\), write \(k=2h\) with \(h\geq 2\). We have \(\gcd(k,t)=h\), so \(t=hu\) with \(u\) odd, and Proposition~\ref{prop:antipodal} applies. Therefore \(H_{k,n}\) is always \(2\)-transitive.
\end{proof}

\section{From two-transitivity to the alternating group}
\label{sec:alternating-containment}

The controlled commutator also provides a large fixed point ratio element.

\begin{lemma}\label{lem:section-seven-fpr}
The element \(c=[\rho,u_d]\) has order \(t\) and
\begin{equation}
  \fpr(c)=\frac{k-2}{k}.
\label{eq:commutator-fpr}
\end{equation}
For every prime \(p\mid t\), the element
\begin{equation}
  x=c^{t/p}
\label{eq:prime-order-power}
\end{equation}
has order \(p\) and the same fixed point ratio.
\end{lemma}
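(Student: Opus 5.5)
By Proposition~\ref{prop:gate}, \(c\) fixes every point outside the two piles \(k-r-1\) and \(k-1\). On each block of those piles it acts as the translation \(X\mapsto X\mp1\) of \(\mathbb Z/t\mathbb Z\). The plan is to read the order and fixed points of \(c\) and of its powers straight off this description.

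First, the fixed points. Each pile contains \(n\) points, and there are \(k-2\) piles on which \(c\) is the identity. On the two remaining piles, every point is moved, because \(t>1\) and a unit translation of \(\mathbb Z/t\mathbb Z\) has no fixed point. Here we need \(k-r-1\neq k-1\), which holds because \(r\geq1\). We also need \(k\geq 3\), so that these two piles do not exhaust the deck. Hence \(|\Fix(c)|=(k-2)n\) and \(\fpr(c)=(k-2)/k\).

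Second, the order. On each moved block, \(c\) acts as a single \(t\)-cycle \(X\mapsto X\pm1\) on the \(t\) residual values. So \(c\) is a product of disjoint \(t\)-cycles together with fixed points, and its order is exactly \(t\).

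Now let \(p\mid t\) be prime. Then \(x=c^{t/p}\) has order \(t/\gcd(t,t/p)=p\). On each moved block, \(x\) acts as translation by \(\pm t/p\) modulo \(t\). This translation is nonzero because \(0<t/p<t\), so it again has no fixed point. The points fixed by \(c\) are of course fixed by \(x\). Therefore \(\Fix(x)=\Fix(c)\), and \(\fpr(x)=(k-2)/k\).

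There is no real obstacle here. The only care needed is to use the standing hypothesis \(t>1\), so that \(c\neq1\) and the translations are fixed-point-free, and to note that the two support piles are distinct.
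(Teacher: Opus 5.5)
Your proof is correct and follows essentially the same argument as the paper. You read off from Proposition~\ref{prop:gate} that \(c\) is the identity on \(k-2\) piles and a product of disjoint \(t\)-cycles (one for each of the \(k^s\) blocks) on the other two, and you observe that the \((t/p)\)-th power of a \(t\)-cycle is a fixed-point-free product of \(p\)-cycles. Your remark that \(k\geq3\) is ``needed'' is superfluous: the count \(|\Fix(c)|=(k-2)n\) holds equally for \(k=2\), so it only reflects the standing hypothesis.
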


\begin{proof}
By Proposition~\ref{prop:gate}, \(c\) fixes \(k-2\) piles pointwise. On each of the other two piles, and for each of the \(k^s\) choices of the lower word \(B\), it acts on \(X\in\mathbb Z/t\mathbb Z\) as translation by \(1\) or \(-1\). Thus it has order \(t\) and fixes exactly \((k-2)n\) points. Raising a \(t\)-cycle to the power \(t/p\) produces cycles of length \(p\) and no fixed points.
\end{proof}

We use two consequences of the fixed point ratio classification of Burness and Guralnick.

\begin{theorem}[]
\label{thm:bg-product-action}
Let \(G\) be a finite primitive permutation group and let \(x\in G\) have prime order \(p\). Then either
\begin{equation}
  \fpr(x)\leq (p+1)^{-1/2},
\label{eq:bg-product-bound}
\end{equation}
or, up to permutation isomorphism, there are integers \(M,e,\ell\) with
\(e\geq 1\) and \(1\leq\ell<M/2\) such that
\begin{equation}
  \Alt(M)^e\leq G\leq\Sym(M)\wr\Sym(e)
\label{eq:bg-product-action}
\end{equation}
in product action on \(\binom{[M]}{\ell}^{\,e}\).
\end{theorem}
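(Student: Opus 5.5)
The statement is essentially a citation: it is one of the two consequences the paper draws from the Burness--Guralnick classification of primitive groups containing elements of prime order with large fixed point ratio. I would not reprove it from scratch. Instead I would derive it from their main theorem, which says the following. If \(G\) is primitive of degree \(N\) and \(x\in G\) has prime order \(p\) with \(\fpr(x)>(p+1)^{-1}\), then \((G,x)\) belongs to an explicit list. The proof is then a matter of checking that, under the stronger hypothesis \(\fpr(x)>(p+1)^{-1/2}\), every entry of that list except the product action family \eqref{eq:bg-product-action} is excluded.

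I would proceed by O'Nan--Scott type.

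\emph{Affine groups.} Here \(x\) acts on \(V=\mathbb F_q^m\) with \(q\) a power of a prime \(r\), and \(\Fix(x)\) is either empty or an affine subspace. Hence \(\fpr(x)\leq 1/r'\), where \(r'\geq2\) is the smallest prime power occurring as a proper index. The Burness--Guralnick affine list consists of transvections, reflections and small-dimensional cases. In these cases one checks directly that \(\fpr(x)=q^{-1}\) or \(q^{-1/2}\)-type bounds hold, and these are at most \((p+1)^{-1/2}\). This is their Corollary~2 type estimate, where the borderline \(\AGL(m,2)\) with \(p=3\) gives exactly \(1/2=(3+1)^{-1/2}\), so that case is allowed by the non-strict inequality.

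\emph{Almost simple groups.} Their classification gives the list \(\Alt(M)\) or \(\Sym(M)\) acting on \(\ell\)-sets, together with classical groups in subspace actions and finitely many sporadic or small exceptions. For classical groups acting on subspaces, the fixed point ratio of a prime order element is roughly \(q^{-\dim}\). Comparing this with \((p+1)^{-1/2}\), where \(p\) divides \(q^i-1\) or \(p\) is the characteristic, leaves only cases with tiny \(q\). I expect those to have to be checked against their tables.

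\emph{Product and diagonal types.} Diagonal type and twisted wreath types have small fixed point ratios. For a product action \(G\leq K\wr\Sym(e)\), \(\fpr\) is bounded by that of a component or by a power. So the surviving cases are exactly those whose component is \(\Alt(M)\) or \(\Sym(M)\) on \(\ell\)-sets, which is \eqref{eq:bg-product-action} with \(e\geq1\) (and \(e=1\) is the almost simple case).

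The main obstacle is the classical group and small exceptional entries. There the inequality \(\fpr(x)>(p+1)^{-1/2}\) must be excluded by explicit numerical comparison rather than structure. For these I would try first to quote Burness--Guralnick's own corollary verbatim, which states precisely this dichotomy. Failing that, I would tabulate the finitely many small \((q,p)\) cases directly.
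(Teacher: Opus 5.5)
The route you end with, quoting Burness--Guralnick's Corollary~2 verbatim, is exactly the paper's proof. The paper gives no argument beyond that citation together with their Remark~2 for the normalization \(1\leq\ell<M/2\). Your sketch never addresses that normalization. It is immediate, though: \(\ell\)-sets and \((M-\ell)\)-sets give permutation isomorphic actions, and \(\ell=M/2\) is imprimitive. So the proposal is sound to the extent that it ends in that citation. The derivation from the \((p+1)^{-1}\) classification that you present as your main route is not a proof as written.

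\textbf{Affine case.} The affine paragraph contains a false claim. Let \(x\in\AGL(m,2)\) have order \(3\) and fix a point. Then \(x\) acts linearly and \(\Fix(x)\) is a coset of \(C_V(x)\). Since \([V,x]\) is a sum of \(2\)-dimensional irreducibles, \(\fpr(x)\leq 1/4\), so there is no borderline value \(1/2\). The correct affine argument is short. If \(x\) fixes a point then \(\fpr(x)=|V:C_V(x)|^{-1}\). This index is at least \(p\) when \(p\) is the characteristic \(r\). Otherwise it is at least \(r^d\geq p+1\), where \(d\) is the order of \(r\) modulo \(p\). Hence \(\fpr(x)\leq 1/p<(p+1)^{-1/2}\), strictly, because \(p^2>p+1\).

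\textbf{Almost simple case.} Here ``roughly \(q^{-\dim}\)'' cannot decide anything. Some small Lie type actions genuinely violate the bound and are admissible only \emph{up to permutation isomorphism}. The field involution of \(\PGammaL(2,4)\) fixes \(\operatorname{PG}(1,2)\), so \(\fpr=3/5>3^{-1/2}\). A transvection of \(\operatorname{Sp}_4(2)\) on \(6\) points has \(\fpr=2/3\). These must be recognized as \(\Sym(5)\) and \(\Sym(6)\) in natural action, not excluded. Other cases are near misses that need exact values. For example, transvections of \(\operatorname{Sp}_6(2)\) on \(28\) points have \(\fpr=4/7\), which lies below \(3^{-1/2}\) only because \(48<49\).

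\textbf{Product type and the remaining types.} Write \(G\leq K\wr\Sym(e)\) acting on \(\Gamma^e\). You need an explicit bound \(\fpr(x)\leq|\Gamma|^{-(p-1)}\) when \(x\) permutes coordinates nontrivially. Otherwise you need \(\fpr(x)\leq\fpr_\Gamma(x_i)\) for a nontrivial coordinate \(x_i\), which reduces to the component group. The diagonal and twisted wreath types, and their holomorph and compound variants, are only asserted in your sketch, not checked.

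Carrying all of this out carefully is the content of Burness--Guralnick's proof of Corollary~2. Citing it, as the paper does, is the appropriate move.
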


Here \([M]=\{1,\ldots,M\}\). This is \cite[Corollary~2 and Remark~2, p.~5]{BG}, Corollary~2 gives the dichotomy, and Remark~2 supplies the normalization \(1\leq\ell<M/2\). Its hypotheses are primitivity and prime order, with no restriction to odd primes, so they include \(p=2\). The threshold is exactly \((p+1)^{-1/2}\); the bound \(p^{-1}\) belongs instead to the almost simple Corollary~3. At \(p=2\),
\[
  (p+1)^{-1/2}=3^{-1/2}\approx 0.57735<\frac35.
\]
Thus the power of two residual family is already covered when \(k=5\).

\begin{theorem}[]
\label{thm:bg-almost-simple-action}
Let \(G\) be a finite almost simple primitive permutation group and let \(x\in G\) have prime order \(p\). Then either
\[
  \fpr(x)\leq\frac1p,
\]
or, up to permutation isomorphism, \(G\) is an alternating or symmetric group on \(\ell\) subsets, or \((G,x)\) is one of the classical subspace action exceptions in Table 1 of \cite{BG}.
\end{theorem}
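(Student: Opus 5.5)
This statement is a specialization of the fixed point ratio classification of Burness and Guralnick \cite{BG}, so I would not prove it from scratch. The plan is to read it off their almost simple result, in the same way Theorem~\ref{thm:bg-product-action} was read off their Corollary~2. The relevant input is their main theorem on almost simple primitive groups together with its Corollary~3. For a finite almost simple primitive group \(G\) with point stabilizer \(H\) and an element \(x\) of prime order \(p\), these list every pair \((G,H)\) with \(\fpr(x)>1/p\).

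I would carry this out in three steps.

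\textbf{Step one: match the hypotheses.} The only hypotheses in \cite{BG} are that \(G\) is almost simple and primitive and that \(x\) has prime order. No parity restriction on \(p\) is imposed, so \(p=2\) and \(p=3\) are both allowed. This matters for the later \(k=4\) application, which uses the order \(3\) element \(x=c^{t/3}\) from Lemma~\ref{lem:section-seven-fpr}.

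\textbf{Step two: sort the exceptional list.} I would go through the cases of \cite{BG} in which the bound \(\fpr(x)\le 1/p\) fails and place each into one of the two families named in the statement:
\begin{enumerate}[(a)]
\item \(\Soc(G)=\Alt(M)\) acting on \(\ell\)-subsets, which I would normalize to \(1\le\ell<M/2\) as in their Remark~2;
\item classical groups in subspace actions, with the specific \((G,H,x)\) recorded in Table~1 of \cite{BG}.
\end{enumerate}

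\textbf{Step three: dispose of any residual exceptions.} This is the main obstacle. I need to confirm that the classification leaves no small sporadic or low-degree exceptional pairs outside these two families. If such pairs occur, for instance actions of \(\Alt(M)\) on partitions or a handful of explicit small-degree cases, I would first check whether \cite{BG} already folds them into Table~1 or into the subset actions. Failing that, I would weaken the statement to a finite explicit list of small-degree exceptions.

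That weaker form would still suffice downstream. In the application, the degree is \(4n\) with \(n\) not a power of \(4\) and not of the form \(2\cdot 4^j\), and the fixed point ratio is exactly \(1/2\). Any finite list of small exceptions can therefore be excluded by a degree or fixed-point count, which can be done when the result is applied.
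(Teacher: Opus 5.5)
Your approach is the paper's: its entire proof is the citation \cite[Corollary~3, p.~5, and Table~1, p.~6]{BG}, which the paper takes to be stated in exactly this two-family form (subset actions of \(\Alt(M)\) or \(\Sym(M)\), plus the Table~1 classical subspace cases). So your Step three check only confirms the citation, and the weakened fallback should not be needed. Had it been needed, your claim that it suffices downstream would be incomplete. The theorem is also applied in Proposition~\ref{prop:odd-prime-residuals-three}, for \(k=3\), \(p\in\{5,7\}\) and \(\fpr(x)=1/3\), and not only in the \(k=4\) order-\(3\) application you mention.
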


This is \cite[Corollary~3, p.~5, and Table~1, p.~6]{BG}.

\begin{corollary}[]
\label{cor:order-three-specialization}
Let \(G\) be a finite almost simple primitive permutation group and let \(x\in G\) have order \(3\) with \(\fpr(x)>1/3\). Then either
\begin{enumerate}[(i)]
\item \(G=\Sym(M)\) or \(\Alt(M)\) acting on \(\ell\) subsets of \([M]\), where \(1\leq\ell<M/2\); or
\item \(G\) is the exceptional classical action with
\[
  (G_0,H,p,\fpr(x))
  =\bigl(\operatorname{Sp}_6(2),\operatorname{O}_6^-(2),3,5/14\bigr)
\]
from Table~1 of \cite{BG}.
\end{enumerate}
\end{corollary}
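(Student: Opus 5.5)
The plan is to apply Theorem~\ref{thm:bg-almost-simple-action} with \(p=3\) and then discard every alternative that is incompatible with \(\fpr(x)>1/3\).

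\textbf{Step 1.} Since \(x\) has prime order \(3\) and \(\fpr(x)>1/3=1/p\), the first alternative \(\fpr(x)\leq 1/p\) of Theorem~\ref{thm:bg-almost-simple-action} cannot hold. So either \(G\) is an alternating or symmetric group acting on \(\ell\)-subsets, or \((G,x)\) is one of the classical subspace exceptions in Table~1 of \cite{BG}.

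\textbf{Step 2 (subset actions).} If \(G\) acts on \(\ell\)-subsets of \([M]\), I normalize \(\ell\).
\begin{itemize}
\item The action on \(\ell\)-subsets is permutation isomorphic to the action on \((M-\ell)\)-subsets via complementation, so I may assume \(\ell\leq M/2\).
\item The case \(\ell=M/2\) is excluded by primitivity. The point stabilizer \((\Sym(\ell)\times\Sym(\ell))\cap G\) lies properly inside the stabilizer of the partition \(\{\Delta,[M]\setminus\Delta\}\), namely \((\Sym(\ell)\wr\Sym(2))\cap G\). Hence it is not maximal; this also holds for \(\Alt(M)\), since \(M\geq 5\) in the almost simple setting.
\item This gives \(1\leq\ell<M/2\), which is alternative (i). This normalization is the same one that Remark~2 of \cite{BG} records in the product action statement.
\end{itemize}

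\textbf{Step 3 (Table~1).} Otherwise \((G,x)\) is a row of Table~1 of \cite{BG}. Each row lists a classical socle \(G_0\), a subspace-type point stabilizer \(H\), the prime \(p\), and the value or limiting value of \(\fpr(x)\). I go through the table and keep only the rows with \(p=3\) whose fixed point ratio exceeds \(1/3\). The expectation is that most rows have \(p=2\) (involutions in low-dimensional subspace actions), or have a ratio at most \(1/p\) once \(p=3\) is imposed. The only survivor should be
\[
  (G_0,H,p,\fpr(x))=\bigl(\operatorname{Sp}_6(2),\operatorname{O}_6^-(2),3,5/14\bigr),
\]
where indeed \(5/14>1/3\). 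This is alternative (ii).

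\textbf{Main obstacle.} The hard part is this table inspection. It is a finite check, but it needs care in two ways. First, some rows give a family of ratios depending on \(q\) or on the dimension, so I must confirm that for \(p=3\) the ratio exceeds \(1/3\) only at the listed \((\operatorname{Sp}_6(2),\operatorname{O}_6^-(2))\) instance. Second, I must check that no row with \(p=3\) is stated for a general prime \(p\) in a way that could apply here.

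I would handle this by reading each entry of Table~1 with \(p\) specialized to \(3\) and comparing the stated ratio with \(1/3\) directly. Where an entry depends on \(q\), I would use the monotonicity in \(q\) that is implicit in the table.
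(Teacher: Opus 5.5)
Your proposal is correct and follows the paper's route. You apply Burness--Guralnick's Corollary~3 with \(p=3\), so \(\fpr(x)>1/3\) rules out the bounded alternative, and then inspect Table~1 of \cite{BG}, whose only order \(3\) row outside the subset actions is the \((\operatorname{Sp}_6(2),\operatorname{O}_6^-(2))\) action with ratio \(5/14>1/3\). Your explicit normalization \(1\leq\ell<M/2\), via complementation and the imprimitivity of the action on \(M/2\)-subsets, is a harmless addition that the paper leaves to the conventions of \cite{BG}.
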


\begin{proof}
Inspect \cite[Table~1, p.~6]{BG}.  Its unique order \(3\) row outside the subset actions is the stated \(\operatorname{Sp}_6(2)\) action.
\end{proof}

\begin{lemma}[]
\label{lem:collapse-subset-product}
Suppose a group \(G\) is \(2\)-transitive on its domain \(\Omega\) and occurs in the action \eqref{eq:bg-product-action}. Then \(e=\ell=1\), and consequently \(\Alt(\Omega)\leq G\).
\end{lemma}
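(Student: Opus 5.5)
The plan is to show that the point stabilizer in the product action is not transitive on the remaining points unless \(e=\ell=1\). Identify \(\Omega\) with \(\binom{[M]}{\ell}^{\,e}\). Here \(G\leq\Sym(M)\wr\Sym(e)\) preserves the Hamming-type structure. For two points \(\omega=(A_1,\ldots,A_e)\) and \(\omega'=(A_1',\ldots,A_e')\), consider the multiset of intersection sizes \(\{|A_i\cap A_i'|\}_{i=1}^e\), taken up to reordering the coordinates. Every element of \(\Sym(M)\wr\Sym(e)\) permutes coordinates and acts within coordinates by permutations of \([M]\). Such an element preserves this multiset of intersection sizes. Hence the multiset is an invariant of ordered pairs of points under \(G\).

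If \(G\) is \(2\)-transitive, all ordered pairs of distinct points lie in one orbit, so the invariant must take a single value on pairs of distinct points. First suppose \(e\geq 2\). Since \(\ell<M/2\) we have \(M\geq 3\), so \(\binom{M}{\ell}\geq 3>1\), and each coordinate offers some \(A_i'\neq A_i\). We can therefore choose \(\omega'\) differing from \(\omega\) in exactly one coordinate, and also \(\omega''\) differing from \(\omega\) in every coordinate. These give multisets in which the number of entries equal to \(\ell\) is \(e-1\) and \(0\) respectively, which differ. This contradicts \(2\)-transitivity, so \(e=1\).

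Now let \(e=1\), with \(G\) acting on \(\ell\)-subsets and \(1\leq\ell<M/2\). If \(\ell\geq 2\), there are distinct \(\ell\)-subsets \(A'\) with \(|A\cap A'|=\ell-1\), and also ones with \(|A\cap A'|=\ell-2\ge 0\). The latter exist because \(M\geq 2\ell+1\) leaves at least \(\ell+1\ge 2\) points outside \(A\). These two pairs lie in different orbits, so again \(2\)-transitivity fails. Hence \(\ell=1\). Then \(\Omega\) is identified with \([M]\) and \(\Alt(M)\leq G\) in its natural action, that is, \(\Alt(\Omega)\leq G\).

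The only delicate point is that the invariant is genuinely preserved under the full wreath product, including the coordinate permutations. This holds because coordinate permutation reorders the intersection sizes simultaneously for both points. The identification "up to permutation isomorphism" transports \(2\)-transitivity unchanged, so no further obstacle is expected.
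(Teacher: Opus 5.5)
Your proposal is correct and follows essentially the same route as the paper. The paper uses Hamming distance, which is your invariant ``number of coordinates with \(|A_i\cap A_i'|<\ell\)'', to force \(e=1\), and then uses \(|A\cap B|\) to force \(\ell=1\). The only difference is that you separate intersection sizes \(\ell-1\) and \(\ell-2\), while the paper uses \(\ell-1\) and \(0\) (disjoint subsets); this is immaterial.
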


\begin{proof}
If \(e\geq2\), then the induced product action preserves Hamming distance on ordered pairs of distinct points. There are pairs at Hamming distance one and pairs at Hamming distance two, so no subgroup of the wreath product is \(2\)-transitive. Hence \(e=1\). If \(\ell\geq2\), then the induced \(\Sym(M)\) action on the set of \(\ell\)-subsets preserves \(|A\cap B|\) on ordered pairs \((A,B)\). Since \(\ell<M/2\), there are two disjoint \(\ell\) subsets, and there are two distinct \(\ell\)-subsets meeting in \(\ell-1\) points. Thus there are at least two orbits on ordered pairs of distinct points, again contradicting \(2\)-transitivity. Therefore \(\ell=1\), and \eqref{eq:bg-product-action} is the natural action.
\end{proof}

\begin{proposition}[]
\label{prop:alternating-at-least-five}
Let \(k\geq 5\), let \(n=k^s t\) with \(t>1\), and suppose that \(H_{k,n}\) is \(2\)-transitive. Then
\[
  \Alt(kn)\leq H_{k,n}.
\]
\end{proposition}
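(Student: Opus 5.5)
We apply Theorem~\ref{thm:bg-product-action} to the element furnished by Lemma~\ref{lem:section-seven-fpr}. Since \(H_{k,n}\) is \(2\)-transitive, it is primitive on \(\Omega_{k,n}\), so the dichotomy of Theorem~\ref{thm:bg-product-action} is available. Because \(t>1\), we may choose a prime \(p\mid t\). Lemma~\ref{lem:section-seven-fpr} then gives an element \(x=c^{t/p}\in H_{k,n}\) of order \(p\) with
\[
  \fpr(x)=\frac{k-2}{k}=1-\frac2k\geq\frac35,
\]
using \(k\geq5\).

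We next check that this exceeds the threshold for every prime. The function \(p\mapsto(p+1)^{-1/2}\) is decreasing, so its maximum over primes is at \(p=2\), where it equals \(3^{-1/2}\approx0.577<3/5\). Hence
\[
  \fpr(x)>(p+1)^{-1/2}
\]
whatever prime divides \(t\), and the first alternative \eqref{eq:bg-product-bound} fails. Theorem~\ref{thm:bg-product-action} therefore places \(H_{k,n}\), up to permutation isomorphism, in the product action \eqref{eq:bg-product-action} on \(\binom{[M]}{\ell}^{e}\) with \(1\leq\ell<M/2\).

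Lemma~\ref{lem:collapse-subset-product} applies, since \(H_{k,n}\) is \(2\)-transitive. It forces \(e=\ell=1\), so \(\Alt(M)\leq H_{k,n}\leq\Sym(M)\) in the natural action on \(M=kn\) points. Since the permutation isomorphism preserves the domain, this gives \(\Alt(kn)\leq H_{k,n}\).

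The only delicate point is confirming that Corollary~2 of \cite{BG} imposes no restriction to odd \(p\) and has exactly the threshold \((p+1)^{-1/2}\). The paper has already recorded this after Theorem~\ref{thm:bg-product-action}. The remaining bookkeeping, namely the fixed point ratio of the power \(c^{t/p}\), is Lemma~\ref{lem:section-seven-fpr}. I therefore expect no real obstacle beyond verifying the inequality \(1-2/k>3^{-1/2}\), which holds already at \(k=5\).
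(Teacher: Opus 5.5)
Your proposal is correct and follows the paper's proof step for step. Both use the element \(c^{t/p}\) from Lemma~\ref{lem:section-seven-fpr}, the chain \((k-2)/k\geq 3/5>3^{-1/2}\geq(p+1)^{-1/2}\), primitivity from \(2\)-transitivity, and then Theorem~\ref{thm:bg-product-action} together with Lemma~\ref{lem:collapse-subset-product}. The only cosmetic difference is that the paper certifies \(3/5>1/\sqrt{3}\) exactly by squaring (equivalently \(27>25\)) rather than by a decimal approximation.
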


\begin{proof}
Choose a prime \(p\mid t\) and let \(x=c^{t/p}\). By Lemma~\ref{lem:section-seven-fpr},
\[
  \fpr(x)=\frac{k-2}{k}\geq\frac35>3^{-1/2}
  \geq(p+1)^{-1/2}.
\]
The strict endpoint inequality is exact: after squaring positive quantities, \(3/5>1/\sqrt3\) is equivalent to \(27>25\). Thus the bounded alternative in \cite[Corollary~2, p.~5]{BG} is impossible, including in the limiting case \(k=5\), \(p=2\). The group \(H_{k,n}\) is primitive because it is \(2\)-transitive. Theorem~\ref{thm:bg-product-action} and Lemma~\ref{lem:collapse-subset-product} give the result.
\end{proof}

\begin{proposition}[]
\label{prop:alternating-four-piles}
Let \(n=4^s t\) with \(t>1\) and \(4\nmid t\).
\begin{enumerate}[(i)]
\item If \(t=2\), then \(H_{4,n}\cong\AGL(2s+3,2)\).
\item If \(t\neq 2\) and \(H_{4,n}\) is \(2\)-transitive, then \(\Alt(4n)\leq H_{4,n}\).
\end{enumerate}
\end{proposition}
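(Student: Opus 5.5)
Part~(i) is immediate: when \(t=2\) we have \(n=2\cdot4^s\), and Proposition~\ref{prop:four-pile-affine} with \(j=s\) gives \(H_{4,n}\cong\AGL(2s+3,2)\). For part~(ii), the plan is first to show that \(t\) has an odd prime divisor, then to apply Burness--Guralnick with a prime-order power of \(c\), splitting according to that prime.

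\emph{Existence of an odd prime.} Since \(t>1\), \(4\nmid t\) and \(t\neq 2\), either \(t\) is odd with \(t\geq 3\), or \(t=2t'\) with \(t'>1\) odd. In both cases \(t\) has an odd prime divisor \(p\). By Lemma~\ref{lem:section-seven-fpr}, the element \(x=c^{t/p}\) has order \(p\) and \(\fpr(x)=(k-2)/k=1/2\).

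\emph{The case \(p\geq5\).} Here \((p+1)^{-1/2}\leq 6^{-1/2}<1/2\). The group \(H_{4,n}\) is primitive because it is \(2\)-transitive, so Theorem~\ref{thm:bg-product-action} places it in the product action \eqref{eq:bg-product-action}. Lemma~\ref{lem:collapse-subset-product} then gives \(\Alt(4n)\leq H_{4,n}\), exactly as in Proposition~\ref{prop:alternating-at-least-five}.

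\emph{The case \(p=3\).} It remains to treat \(t\) whose only odd prime divisor is \(3\), that is, \(t=3^a\) or \(t=2\cdot3^a\) with \(a\geq1\). Now \((p+1)^{-1/2}=1/2\) equals \(\fpr(x)\), so the strict dichotomy of Theorem~\ref{thm:bg-product-action} gives nothing. Instead I would use the order \(3\) element \(x=c^{t/3}\), for which \(\fpr(x)=1/2>1/3\).
\begin{enumerate}[(a)]
\item \emph{Affine type is excluded by the degree.} A finite \(2\)-transitive group is affine or almost simple (Burnside). An affine group has prime power degree, but \(4n=2^{2s+2}t\) with \(t\) divisible by \(3\) is not a prime power. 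Hence \(H_{4,n}\) is almost simple and primitive.
\item \emph{Apply Corollary~\ref{cor:order-three-specialization}.} In alternative~(i), \(H_{4,n}\) is \(\Sym(M)\) or \(\Alt(M)\) on \(\ell\)-subsets. This is the case \(e=1\) of \eqref{eq:bg-product-action}, so Lemma~\ref{lem:collapse-subset-product} forces \(\ell=1\) and gives \(\Alt(4n)\leq H_{4,n}\).
\item \emph{Exclude the \(\operatorname{Sp}_6(2)\) exception.} Alternative~(ii) has \(\fpr(x)=5/14\neq1/2\), which contradicts \(\fpr(x)=1/2\). As a backup, its degree \([\operatorname{Sp}_6(2):\operatorname{O}_6^-(2)]=28\) is not of the form \(4\cdot4^s\cdot 3^a\) or \(4\cdot4^s\cdot2\cdot3^a\).
\end{enumerate}

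\emph{Main obstacle.} The delicate point is the boundary case \(p=3\), where the product action bound is attained with equality rather than beaten. That is why the almost simple route, and hence the prior exclusion of affine type, is needed there. I would check carefully that:
\begin{itemize}
\item Corollary~\ref{cor:order-three-specialization} requires only \(\fpr(x)>1/3\), which \(1/2\) satisfies;
\item \(1/2\) matches no other order \(3\) row of Table~1 of \cite{BG}.
\end{itemize}
If an alternative route is wanted, one could instead pass to \(e=1\) in the product action via the almost simple reduction. The degree argument in~(a) is what makes either route clean.
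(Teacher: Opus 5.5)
Your proposal is correct and follows the paper's proof essentially step for step: part (i) comes from Proposition~\ref{prop:four-pile-affine}; when \(t\) has a prime divisor \(p\geq5\), the Burness--Guralnick product-action dichotomy is applied to \(x=c^{t/p}\); and for \(t=3^a\) or \(2\cdot3^a\) with \(a\geq1\), the non-prime-power degree forces almost simple type, after which Corollary~\ref{cor:order-three-specialization} applies, the \(\operatorname{Sp}_6(2)\) row is excluded via \(5/14\neq1/2\), and \(2\)-transitivity collapses the subset action to \(\ell=1\). Your observation that \(\fpr(x)=1/2=(3+1)^{-1/2}\) holds with equality at \(p=3\) makes explicit why the paper switches to the almost simple route in that case.
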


\begin{proof}
Part (i) is Proposition~\ref{prop:four-pile-affine}, since \(n=2\cdot4^s\). For part (ii), first suppose that \(t\) has a prime divisor \(p\geq 5\). The element \(x=c^{t/p}\) has order \(p\) and fixed-point ratio \(1/2\), so
\[
  \fpr(x)=\frac12>(p+1)^{-1/2}.
\]
Theorem~\ref{thm:bg-product-action} and Lemma~\ref{lem:collapse-subset-product} give \(\Alt(4n)\leq H_{4,n}\). It remains to assume that every prime divisor of \(t\) belongs to \(\{2,3\}\). Since \(4\nmid t\), we have
\[
  t=3^a\qquad\text{or}\qquad t=2\cdot 3^a
\]
for some \(a\geq 0\). The cases \(a=0\) are \(t=1\), which is excluded, and \(t=2\), which is part (i). Hence \(a\geq 1\). The element \(x=c^{t/3}\) has order \(3\) and
\begin{equation}
  \fpr(x)=\frac12.
\label{eq:four-pile-order-three-fpr}
\end{equation}

A finite \(2\)-transitive group is either of affine type or almost simple; see, for example, \cite[Theorem~4.1B]{DM}. Here the degree
\[
  4n=4^{s+1}t
\]
is divisible by both \(2\) and \(3\), and therefore is not a prime power. Thus \(H_{4,n}\) is almost simple. Apply Corollary~\ref{cor:order-three-specialization}. Its exceptional \(\operatorname{Sp}_6(2)\) action has fixed point ratio \(5/14\), not \(1/2\), so it cannot occur. We are left with an alternating or symmetric group on \(\ell\) subsets. Two-transitivity forces \(\ell=1\) by the intersection size argument in Lemma~\ref{lem:collapse-subset-product}. Therefore \(\Alt(4n)\leq H_{4,n}\).
\end{proof}

\section{Completion of the three pile case}
\label{sec:three-pile}

Set \(H_n=H_{3,n}\) and \(N=3n\). If \(n=3^s t\) with \(3\nmid t\), the controlled commutator \(c\) from Proposition~\ref{prop:gate} has fixed point ratio \(1/3\). Before the classified large-degree step, Corollary 3 of Burness--Guralnick settles every residual factor with an odd prime divisor.

\begin{proposition}[]
\label{prop:odd-prime-residuals-three}
Let \(n=3^s t\) with \(t>1\) and \(3\nmid t\). If \(t\) is not a power of \(2\), then
\[
  \Alt(3n)\leq H_n.
\]
\end{proposition}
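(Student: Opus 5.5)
Since \(t>1\) is not a power of \(2\), it has an odd prime divisor \(p\); because \(3\nmid t\), we have \(p\geq 5\). I take \(x=c^{t/p}\). By Lemma~\ref{lem:section-seven-fpr} with \(k=3\), \(x\) has order \(p\) and
\[
  \fpr(x)=\frac13>\frac1p .
\]
The plan is to feed this element into the almost simple dichotomy of Theorem~\ref{thm:bg-almost-simple-action}. First, Theorem~\ref{thm:two-transitive} applies because \(t>1\) means \(n\) is not a power of \(3\), so \(H_n\) is \(2\)-transitive. By \cite[Theorem~4.1B]{DM}, it is then either affine or almost simple.

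The first task is to exclude the affine case. I would not rely on the degree alone, since \(3n=3^{s+1}t\) could in principle be a prime power only if \(t\) were a power of \(3\), which it is not. Indeed \(3^{s+1}t\) is divisible by \(3\) and by \(p\geq5\), so it is not a prime power. A \(2\)-transitive affine group has prime power degree, so \(H_n\) is almost simple. This step is immediate.

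Next, apply Theorem~\ref{thm:bg-almost-simple-action} to \(G=H_n\) and the element \(x\) of prime order \(p\geq 5\). Since \(\fpr(x)=1/3>1/p\), the bounded alternative fails. Two possibilities remain. The first is that \(G\) is \(\Alt(M)\) or \(\Sym(M)\) on \(\ell\)-subsets; then the intersection size argument of Lemma~\ref{lem:collapse-subset-product} together with \(2\)-transitivity forces \(\ell=1\), giving \(\Alt(3n)\leq H_n\). The second is that \((G,x)\) is one of the classical subspace exceptions in \cite[Table~1]{BG}.

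The main obstacle is ruling out these Table~1 rows. My approach is to check the rows with \(p\geq5\) by comparing their listed fixed point ratios, which I expect to be specific values differing from \(1/3\). I would also use \(2\)-transitivity and the degree \(3n\), which is divisible by \(3\) and by \(p\), to exclude any row whose degree or action is incompatible. If some row does have \(p\geq5\), I would compare its recorded \(\fpr\) with \(1/3\) directly, as in the \(\operatorname{Sp}_6(2)\) comparison in Proposition~\ref{prop:alternating-four-piles}. This leaves only the natural alternating or symmetric action, and so \(\Alt(3n)\leq H_n\).
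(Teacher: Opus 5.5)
Your reduction is sound up to the point where you invoke Theorem~\ref{thm:bg-almost-simple-action}. That includes the choice of a prime \(p\geq5\) dividing \(t\), the element \(x=c^{t/p}\) with \(\fpr(x)=1/3>1/p\), \(2\)-transitivity, the non-prime-power degree forcing the almost simple case, and the collapse of a subset action to the natural one via Lemma~\ref{lem:collapse-subset-product}.

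The gap is the step you yourself call the main obstacle. You never actually exclude the classical exceptions in Table~1 of Burness--Guralnick; you only say you \emph{expect} their fixed point ratios to differ from \(1/3\). Once the \(1/p\) dichotomy has been invoked, that exclusion is the whole remaining content of the proposition, so the argument is not complete. Applying the \(1/p\) bound uniformly for every \(p\geq5\) also makes the missing check harder than it needs to be. You would have to handle every row of prime order at least \(5\). At least some of these rows are families given by formulas in a field parameter: the order \(7\) case is the \(q=8\) member of a \(\PSL(2,q)\) family. Each such family would have to be solved against \(1/3\), together with whatever degree and \(2\)-transitivity constraints you intend to use.

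The idea you are missing is to split by the size of \(p\). For \(p\geq11\) one has \((p+1)^{-1/2}\leq12^{-1/2}<1/3\). So Theorem~\ref{thm:bg-product-action} together with Lemma~\ref{lem:collapse-subset-product} already gives \(\Alt(3n)\leq H_n\), with no reference to Table~1 and no need for the almost simple reduction. Only \(p\in\{5,7\}\) require the \(1/p\) dichotomy, and there the inspection is finite. Table~1 has no order \(5\) row, and its only order \(7\) row has socle \(\PSL(2,8)\) with fixed point ratio \(1/7+5/63=2/9\neq1/3\). With this split, or with a genuine row-by-row verification of Table~1 for all prime orders \(r\geq5\), your outline becomes a proof.
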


\begin{proof}
Theorem~\ref{thm:two-transitive} makes \(H_n\) \(2\)-transitive. Choose an odd prime \(p\mid t\), necessarily \(p\geq 5\). By Lemma~\ref{lem:section-seven-fpr}, the element \(x=c^{t/p}\) has order \(p\) and \(\fpr(x)=1/3\). If \(p\geq 11\), then
\[
  \frac13>\frac1{\sqrt{p+1}},
\]
so Theorem~\ref{thm:bg-product-action} and Lemma~\ref{lem:collapse-subset-product} give \(\Alt(3n)\leq H_n\). It remains to consider \(p\in\{5,7\}\). The degree \(3n=3^{s+1}t\) has at least two distinct prime divisors, so it is not a prime power. A finite \(2\)-transitive group is affine or almost simple, hence \(H_n\) is almost simple. Since \(1/3>1/p\), apply Theorem~\ref{thm:bg-almost-simple-action}. The orders displayed in \cite[Table~1, p.~6]{BG} show that there is no order \(5\) row. Its first row is the only possible order \(7\) row. There \(q=8\), the socle is \(\PSL(2,8)\), and the displayed formula gives
\[
  \frac17+\frac5{63}=\frac29,
\]
not \(1/3\). Thus only an alternating or symmetric subset action remains, and Lemma~\ref{lem:collapse-subset-product} forces the natural action. Therefore \(\Alt(3n)\leq H_n\).
\end{proof}

The residual power of two family requires a sharper argument. We isolate the large degree classification input, following the \(k=3\) case analysis in \cite[proof of Theorem~1.3]{XZZZ}.

\begin{lemma}[]
\label{lem:liebeck-saxl-consequence}
Let \(G\leq\Sym(N)\) be an almost simple \(2\)-transitive group of degree \(N>276\), and suppose that its socle is a simple group of Lie type over \(\mathbb F_q\), where \(q>4\). If \(x\in G\setminus\{1\}\) satisfies
\[
  \fpr(x)>\frac4{3q},
\]
then \(\Soc(G)=\PSL(2,q)\), the action is the natural action on \(\operatorname{PG}(1,q)\), and \(N=q+1\). Moreover,
\[
  |\Fix(x)|=2
  \qquad\text{or}\qquad
  |\Fix(x)|=q_0+1,
\]
where in the second case \(q=q_0^r\) for a prime power \(q_0\) and an integer \(r\geq2\). Equivalently,
\[
  \fpr(x)=\frac2{q+1}
  \qquad\text{or}\qquad
  \fpr(x)=\frac{q_0+1}{q+1}.
\]
\end{lemma}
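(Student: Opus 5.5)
The plan is to invoke the classification of finite $2$-transitive almost simple groups, in the form used in \cite[proof of Theorem~1.3]{XZZZ}. I would combine it with the Liebeck--Saxl minimal degree bound, which is the source of the constant $4/(3q)$. First I would list the almost simple $2$-transitive groups whose socle is of Lie type. These are $\PSL(m,q)$ on points or hyperplanes of $\operatorname{PG}(m-1,q)$, $\PSU(3,q)$ of degree $q^3+1$, $\Sz(q)$ of degree $q^2+1$, and the Ree groups of degree $q^3+1$. There are also finitely many sporadic small-degree examples: $\operatorname{Sp}(2m,2)$ of degrees $2^{2m-1}\pm2^{m-1}$, $\PSL(2,11)$ of degree $11$, $G_2(2)'$ of degree $28$, and so on. The symplectic actions have $q=2$, and the other sporadic examples have degree at most $176$. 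So the hypotheses $q>4$ and $N>276$ remove every exceptional entry, and we are left with the four infinite families.

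Next I would apply the Liebeck--Saxl theorem. For a primitive group of Lie type over $\mathbb F_q$, every nonidentity element moves at least a proportion of points close to $1-1/q$. Stated as a fixed point ratio, $\fpr(x)\le 4/(3q)$ outside an explicitly listed set of actions, and that set includes $\PSL(2,q)$ on the projective line. For the remaining families I would check the bound directly, or cite it. In $\PSL(m,q)$ with $m\ge3$ acting on points, a nonidentity element fixes at most the points of a hyperplane together with one extra point, or at most the points of a subgeometry over a subfield $\mathbb F_{q_0}$ when field automorphisms occur. This gives $\fpr(x)\lesssim (q^{m-1}+\cdots)/(q^m+\cdots)\approx 1/q<4/(3q)$. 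Similar counts apply to $\PSU(3,q)$, whose elements fix at most $q+1$ of the $q^3+1$ points. They also apply to $\Sz(q)$ and the Ree groups, where a nonidentity element fixes at most $2$ points, or at most $q_0^2+1$ points for a field automorphism. All of these fall well below $4/(3q)$. Hence $\Soc(G)=\PSL(2,q)$ acting on $\operatorname{PG}(1,q)$, and $N=q+1$.

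Finally, I would compute fixed points in $\PGammaL(2,q)$ acting on $\operatorname{PG}(1,q)$. A nonidentity element of $\PGL(2,q)$ fixes $0$, $1$ or $2$ points. Zero or one fixed point gives $\fpr\le 1/(q+1)<4/(3q)$, so such elements are excluded. Next consider an element outside $\PGL(2,q)$, inducing a field automorphism of order $r\ge2$ with fixed field $\mathbb F_{q_0}$, so that $q=q_0^r$. By Lang's theorem it is $\PGL$-conjugate, up to its $\PGL(2,q)$-part, to a map whose fixed points form a Baer-type subline $\operatorname{PG}(1,q_0)$, or a subset of the same size. So it has $q_0+1$ fixed points, or at most $2$. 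This leaves exactly $|\Fix(x)|\in\{2,q_0+1\}$.

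The main obstacle is the second step: uniformly justifying $\fpr(x)\le 4/(3q)$ for all the higher rank and twisted families, including the elements involving field or graph automorphisms. In particular the graph automorphism in the $\PSL(m,q)$ case must be ruled out, since it does not preserve the point action; a $2$-transitive $G$ acting on points cannot contain it. I would handle this step by quoting Liebeck--Saxl together with the explicit fixed-point counts above, rather than re-deriving it.
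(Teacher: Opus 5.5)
Your route is the paper's. The paper intersects the Liebeck--Saxl exception list with the almost simple $2$-transitive list and uses $q>4$ and $N>276$ to leave only $\PSL(2,q)$ on $\operatorname{PG}(1,q)$. It cites \cite[Theorem~1]{LS} both for the bounded side and for the fixed point sizes $2$ and $q_0+1$; you instead propose hand counts for these.

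One of those hand counts is wrong. ${}^2G_2(q)$ is not a Zassenhaus group: its involutions fix $q+1$ points (the blocks of the Ree unital), and a field automorphism fixes $q_0^3+1$ points, not $q_0^2+1$. The correct ratios are $1/(q^2-q+1)$ and $(q_0^3+1)/(q^3+1)\leq (q+1)/(q^3+1)$, and both still lie below $4/(3q)$. So the conclusion stands, and it is covered anyway when you cite Liebeck--Saxl.

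Two smaller points. In the last step, the cleaner split is as follows: for $x=g\varphi$ with $\varphi$ of order $r$, either $x^r\neq 1$ and $\Fix(x)\subseteq\Fix(x^r)$ has at most $2$ points, or $x^r=1$ and Lang's theorem makes $x$ conjugate to $\varphi$, which fixes $\operatorname{PG}(1,q_0)$. Also, $\operatorname{Sp}(2m,2)$ is an infinite family rather than finitely many examples, though $q=2$ removes it regardless.
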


\begin{proof}
Liebeck and Saxl \cite[Theorem~1]{LS} classify the exceptions to the bound \(\fpr(x)\leq4/(3q)\) for primitive almost simple groups with socle of Lie type over \(\mathbb F_q\). Intersecting their exceptional list with the almost simple \(2\)-transitive actions in \cite[Table~7.4]{Cameron}, and using \(N>276\), leaves only the natural projective line action with socle \(\PSL(2,q)\). In particular, the higher dimensional projective actions and the unitary, Suzuki and Ree \(2\)-transitive families
\[
  \PSU(3,q),\qquad \Sz(q),\qquad {}^2G_2(q)
\]
lie on the bounded side. In the projective line action, the exceptional fixed point sets in \cite[Theorem~1]{LS} have size either \(2\), or \(q_0+1\) for a proper subfield \(\mathbb F_{q_0}\subsetneq\mathbb F_q\), where \(q=q_0^r\) and \(r\geq2\). Dividing by \(|\operatorname{PG}(1,q)|=q+1\) gives the two displayed fixed point ratios. This is the precise specialization used in \cite[proof of Theorem~1.3, Case~1, p.~6]{XZZZ}.
\end{proof}

\begin{proposition}[]
\label{prop:large-degree-fpr-reduction}
Let \(G\leq\Sym(N)\) be \(2\)-transitive of degree \(N>276\). Assume that
\begin{enumerate}[(a)]
\item \(G\) is not of affine type;
\item \(G\) contains a nonidentity element \(g\) with \(\fpr(g)=1/3\);
\item if \(N\) is even, then \(G\) contains an involution \(h\) with \(\fpr(h)=1/3\).
\end{enumerate}
Then \(\Alt(N)\leq G\).
\end{proposition}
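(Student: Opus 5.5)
The plan is to pass to the almost simple case, run through the list of almost simple \(2\)-transitive groups, and rule out every socle other than \(\Alt(N)\) by showing that none of them has an element of fixed point ratio exactly \(1/3\), or, when \(N\) is even, an involution with that ratio.

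\emph{Reduction to a Lie-type list.} Since \(G\) is \(2\)-transitive and not affine, it is almost simple with socle \(T\) (\cite[Theorem~4.1B]{DM}). Then I run through the classification of almost simple \(2\)-transitive groups \cite[Table~7.4]{Cameron}. The sporadic and small exceptional actions all have degree at most \(276\): \(\PSL(2,11)\) of degree \(11\), \(\Alt(7)\) of degree \(15\), the Mathieu groups of degree at most \(24\), \(\PSL(2,8)\) of degree \(28\), HS of degree \(176\) and \(\mathrm{Co}_3\) of degree \(276\). The hypothesis \(N>276\) removes all of them. If \(T=\Alt(N)\) in its natural action we are done. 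What remains is a Lie-type socle over \(\mathbb F_q\):
\begin{itemize}
\item \(\PSL(d,q)\) on points or hyperplanes;
\item \(\operatorname{Sp}(2d,2)\) on the two classes of quadratic forms, of degree \(2^{d-1}(2^d\pm1)\);
\item \(\PSU(3,q)\), \(\Sz(q)\) and \({}^2G_2(q)\).
\end{itemize}

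\emph{Large field, \(q>4\).} In this case \(1/3>4/(3q)\), so the element \(g\) of (b) satisfies the hypothesis of Lemma~\ref{lem:liebeck-saxl-consequence}. That lemma forces the projective line action with \(N=q+1\), together with one of two fixed point ratios.
\begin{itemize}
\item If \(2/(q+1)=1/3\), then \(q=5\) and \(N=6\).
\item If \((q_0+1)/(q+1)=1/3\), then \(q_0^r=3q_0+2\). Since \(r\ge2\), this gives \(q_0^2\le 3q_0+2\), hence \(q_0\le3\). Checking the two cases leaves only \(q_0=2\), \(r=3\), so \(q=8\) and \(N=9\).
\end{itemize}
Both degrees contradict \(N>276\).

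\emph{Small field, \(q\le4\).} The families \(\PSU(3,q)\), \(\Sz(q)\) (which needs \(q\ge8\)) and \({}^2G_2(3)\) either do not occur or have degree at most \(65\). This leaves two cases.
\begin{itemize}
\item \(\PSL(d,q)\) with \(q\in\{2,3,4\}\). The fixed points of a linear element are the points of its eigenspaces, so I will show directly that \(|\Fix(g)|\ne N/3\).
  \begin{itemize}
  \item For \(q=2\): \(|\Fix(g)|=2^e-1\) with \(e<d\), and \(3(2^e-1)=2^d-1\) has no solution once \(d>2\).
  \item For \(q=3\): \(N=(3^d-1)/2\) is not divisible by \(3\), so \(N/3\) is not an integer.
  \item For \(q=4\): the fixed set is a disjoint union of at most three eigenspaces, giving \(\sum_i(4^{a_i}-1)/3\) with \(\sum a_i\le d\). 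A \(2\)-adic and size comparison against \((4^d-1)/9\) should rule this out. Field automorphisms of \(\PGammaL(d,4)\) instead fix a Baer-type subgeometry with \(2^{d}-1\)-type counts, and these are handled by the same size estimate.
  \end{itemize}
  Graph automorphisms do not act on points except via duality, so they do not arise here.
\item \(\operatorname{Sp}(2d,2)\). Here \(N\) is even, and I use the involution \(h\) from (c). A form \(Q\) with polar form \(B\) is fixed by \(h\) exactly when the linear functional \(v\mapsto Q(hv)+Q(v)\) vanishes. Counting such \(Q\) of each type in terms of the rank and type of the involution \(h-1\) (classes \(a_m,b_m,c_m\)) gives \(|\Fix(h)|\) as \(2^{\alpha}(2^{\beta}\pm1)\) or \(0\).
\end{itemize}

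The main obstacle I expect is this last verification: showing that such a count never equals \(2^{d-1}(2^d\pm1)/3\) for \(N>276\). I would first try a \(2\)-adic valuation comparison, then a size comparison showing that the ratio is either at least about \(1/2\) or at most about \(1/4\). With all Lie-type socles excluded, only the natural action of \(\Alt(N)\) or \(\Sym(N)\) remains, so \(\Alt(N)\le G\).
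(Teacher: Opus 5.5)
Your proposal is correct, and it follows a genuinely different route from the paper for the small fields. Up to the $q>4$ step it coincides with the paper: the almost simple reduction, the removal of the sporadic rows by $N>276$, and Liebeck--Saxl. For $q\leq 4$ you replace the paper's citations by direct fixed point counts.

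\emph{Case $q=3$.} Your remark that $3\nmid N=(3^d-1)/2$ is the paper's mod-$3$ contradiction, stated without any eigenspace analysis.

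\emph{Case $q=4$.} The paper quotes Guralnick--Kantor for a prime order power of $g$. Your count works directly once it is made explicit, and size alone suffices (no $2$-adic argument is needed):
\begin{itemize}
\item A nonscalar element of $\PGL(d,4)$ fixes at most $(4^{d-1}+2)/3<(4^d-1)/9$ points when $d\geq 3$.
\item A semilinear $g$ with $g^2\neq 1$ satisfies $\Fix(g)\subseteq\Fix(g^2)$, so the linear bound applies.
\item A semilinear involution is conjugate to the Frobenius, by the Hilbert~90 argument used in Proposition~\ref{prop:finite-three-pile-reduction}. It therefore fixes $2^d-1$ points, giving $\fpr=3/(2^d+1)<1/3$ for $d\geq4$. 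Here $N>276$ forces $d\geq5$, which matters because $d=3$ gives exactly $1/3$.
\end{itemize}

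\emph{Symplectic rows.} The paper uses Aschbacher--Seitz fusion and the centralizer ratio of Lemma~2.2 of Xia--Zhang--Zhang--Zhu, removing an undetermined factor $2^\delta$ by valuations. Your count of $h$-fixed forms avoids all of this, but you left it as a plan. It closes as follows, with $V=\mathbb F_2^{2m}$.
\begin{itemize}
\item Put $W=C_V(h)$, $R=\operatorname{Im}(1+h)$ and $k=\dim R\geq1$. Since $1+h$ is $B$-self-adjoint, $R=W^\perp\subseteq W$.
\item Your functional $f_Q$ vanishes on $W$, so $f_Q=B(c,\cdot)$ with $c\in R$. Moreover $f_{Q+B(a,\cdot)}=f_Q+B((1+h)a,\cdot)$.
\item Hence the $h$-fixed forms are exactly $Q_0+B(w,\cdot)$ for $w\in W$, where $Q_0$ is one fixed form. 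The form $Q_0+B(w,\cdot)$ has the type of $Q_0$ iff $Q_0(w)=0$.
\item On $R$ the map $Q_0((1+h)v)=B(v,hv)$ is linear. If it is nonzero, each type receives $2^{2m-k-1}$ fixed forms.
\item If it is zero, $R$ is totally singular. Counting on $R^\perp/R$ gives $2^{m-1}(2^{m-k}+\varepsilon)$ fixed forms of type $\varepsilon$, with the same sign as in the degree $2^{m-1}(2^m+\varepsilon)$.
\end{itemize}
So $\fpr(h)$ equals $2^{m-k}/(2^m+\varepsilon)$ or $(2^{m-k}+\varepsilon)/(2^m+\varepsilon)$.
\begin{itemize}
\item In the first case, $\fpr(h)=1/3$ forces $k=m$ by parity, and then $2^m=3-\varepsilon$, which is impossible for $m\geq3$.
\item In the second case, $3\cdot2^{m-k}+2\varepsilon=2^m$, and reduction modulo $4$ forces $m-k\leq1$. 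This leaves only $(m,\varepsilon)=(3,+1)$ and $N=36$, exactly the paper's conclusion.
\end{itemize}
Even if you do not pin down the sign, every solution has $m\leq4$, so the hypothesis $N>276$ still suffices.

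\emph{Comparison.} The paper's route is shorter given the literature. Yours is self-contained and produces exact fixed point numbers instead of a ratio with an unknown $2$-power.

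A minor correction: the degree $28$ row is $\PGammaL(2,8)\cong{}^2G_2(3)$, not $\PSL(2,8)$.
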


\begin{proof}
The affine/almost simple dichotomy makes \(G\) almost simple. Suppose that \(\Alt(N)\nleq G\). We first explain the cutoff. In the classification of almost simple \(2\)-transitive groups, the alternating socle target in unbounded degree is the natural action and already contains \(\Alt(N)\). The sporadic socles are
\[
  M_{11},\ M_{12},\ M_{22},\ M_{23},\ M_{24},\ HS,\ Co_3,
\]
and their \(2\)-transitive degrees are at most \(276\), with equality for the \(Co_3\) action. Thus Cameron's table \cite[Table~7.4]{Cameron}, after the alternating target and sporadic rows are discarded, leaves a simple socle of Lie type over some field \(\mathbb F_q\).

\textbf{Case 1: \(q>4\).}
Since \(\fpr(g)=1/3>4/(3q)\), Lemma~\ref{lem:liebeck-saxl-consequence} gives
\(\Soc(G)=\PSL(2,q)\) and
\[
  \frac13=\frac2{q+1}
  \qquad\text{or}\qquad
  \frac13=\frac{q_0+1}{q+1},
  \qquad q=q_0^r,\quad r\geq 2.
\]
The first equality gives \(q=5\) and \(N=6\). The second is \(q_0^r=3q_0+2\). If \(q_0\geq 4\), then \(q_0^r\geq q_0^2>3q_0+2\); \(q_0=3\) gives \(11\), not a power of \(3\); and \(q_0=2\) gives the unique solution \(r=3\), so \(q=8\) and \(N=9\). Both contradict \(N>276\).

\textbf{Case 2: \(q=4\).}
The \(2\)-transitive list leaves the natural actions of \(\PGammaU(3,4)\) and \(\PGammaL(d,4)\). The unitary degree is \(4^3+1=65\). In the projective case,
\[
  N=\frac{4^d-1}{3}>276
\]
forces \(d\geq 5\). By Guralnick--Kantor \cite[Proposition~3.1]{GK}, every prime order nonidentity element has fixed point ratio strictly less than
\[
  \min\left\{\frac12,\frac14+\frac1{4^{d-1}}\right\}
  \leq\frac14+\frac1{4^4}
  =\frac{65}{256}<\frac13,
\]
a contradiction. Indeed, a suitable prime order power \(y\) of \(g\) is nonidentity and satisfies \(\Fix(g)\subseteq\Fix(y)\), hence \(\fpr(y)\geq1/3\).

\textbf{Case 3: \(q=3\).}
Here \(G\leq\PGL(d,3)\) on the projective points of \(\mathbb F_3^d\). Choose a linear representative \(T\in\GL(d,3)\) of \(g\). If \(u_\lambda\) is the dimension of the eigenspace associated to a rational eigenvalue \(\lambda\in\mathbb F_3^\times\), then
\[
  |\Fix(g)|=\sum_{\lambda\in\mathbb F_3^\times}
  \frac{3^{u_\lambda}-1}{2},
\]
and hence
\[
  \fpr(g)=
  \frac{\displaystyle\sum_{\lambda\in\mathbb F_3^\times}
  (3^{u_\lambda}-1)}
  {3^d-1}.
\]
Equality with \(1/3\) would imply
\[
  3\sum_{\lambda\in\mathbb F_3^\times}(3^{u_\lambda}-1)=3^d-1,
\]
which is impossible modulo \(3\).

\textbf{Case 4: \(q=2\).}
The list leaves \(\PSL(d,2)\) on the nonzero vectors and the two symplectic actions of \(\operatorname{Sp}(2m,2)\). For \(\PSL(d,2)=\GL(d,2)\), the fixed vectors of \(g\) form a subspace, so \(|\Fix(g)|=2^r-1\) for some \(r\leq d\). Thus
\[
  \frac13=\frac{2^r-1}{2^d-1},\qquad
  3\cdot 2^r=2^d+2.
\]
The right side is \(2\) modulo \(4\), forcing \(r=1\), and then \(2^d=4\), which is incompatible with \(N>276\). For the symplectic rows, Cameron's table has \(G=G_0\), where \(G_0=\operatorname{Sp}(2m,2)\). The two degrees are
\[
  N=2^{m-1}(2^m+\varepsilon),\qquad
  \varepsilon\in\{+1,-1\}.
\]
They are even, so hypothesis (c) supplies an involution \(h\) with \(\fpr(h)=1/3\). Put
\[
  H=\operatorname{O}^\varepsilon(2m,2),
  \qquad [G_0:H]=2^{m-1}(2^m+\varepsilon).
\]
Since \(\fpr(h)>0\), the involution fixes a coset. After conjugating it, we may assume \(h\in H\). The fusion statement of Aschbacher--Seitz \cite[(8.5)]{AS} gives \(h^{G_0}\cap H=h^H\), and hence
\[
  \fpr(h)=
  \frac{|H|\,|C_{G_0}(h)|}{|G_0|\,|C_H(h)|}.
\]
For a finite group \(K\), let \(O_2(K)\) denote its largest normal \(2\)-subgroup. Lemma~2.2 of \cite[pp.~4--5]{XZZZ}, which is derived there from the involution centralizers of Aschbacher--Seitz, gives an integer \(r\) with \(1\leq r\leq m\) such that
\[
  \frac{|C_{G_0}(h)|}{|C_H(h)|}
  =
  \frac{|\operatorname{Sp}(2m-2r,2)|}
       {|\operatorname{O}^{\varepsilon}(2m-2r,2)|}
  \frac{|O_2(C_{G_0}(h))|}{|O_2(C_H(h))|}.
\]
Both groups in the last quotient are \(2\)-groups. Define
\[
  \delta=
  v_2\bigl(|O_2(C_{G_0}(h))|\bigr)
  -v_2\bigl(|O_2(C_H(h))|\bigr),
\]
so that the last quotient is \(2^\delta\). This makes explicit the power of two suppressed in the calculation in \cite[proof of Theorem~1.3, Case~4, pp.~6--7]{XZZZ}.

If \(r=m\), then the symplectic and orthogonal groups of dimension zero are trivial, and hence
\[
  \frac13=\fpr(h)=
  \frac{2^{\delta-m+1}}{2^m+\varepsilon}.
\]
The denominator \(2^m+\varepsilon\) is odd. Taking \(2\)-adic valuations gives \(\delta-m+1=0\), after which the equality gives \(2^m+\varepsilon=3\). This is impossible for \(m\geq3\).

Suppose that \(r<m\), and put \(a=m-r\). The standard order formulas give
\[
  \frac{|\operatorname{Sp}(2a,2)|}
       {|\operatorname{O}^{\varepsilon}(2a,2)|}
  =2^{a-1}(2^a+\varepsilon).
\]
Consequently
\[
  \frac13=\fpr(h)=
  2^{\delta-r}
  \frac{2^{m-r}+\varepsilon}{2^m+\varepsilon}.
\]
The fraction on the right has odd numerator and odd denominator, so comparison of \(2\)-adic valuations gives \(\delta=r\). Therefore
\[
  2^m=3\cdot2^{m-r}+2\varepsilon.
\]
If \(m-r\geq2\), the right hand side is congruent to \(2\pmod4\), whereas the left hand side is divisible by \(4\). Hence \(m-r=1\). The equation then becomes \(2^m=6+2\varepsilon\), whose only solution with \(m\geq3\) is \((m,\varepsilon)=(3,+1)\). The corresponding degree is
\[
  2^{m-1}(2^m+\varepsilon)=36,
\]
contrary to \(N>276\).

All Lie type cases are impossible, so \(\Alt(N)\leq G\).
\end{proof}

The group \(\PGammaL(2,8)\) is \(2\)-transitive on the nine points of \(\operatorname{PG}(1,8)\). Its order \(3\) field automorphism fixes the subline \(\operatorname{PG}(1,2)\), hence has fixed point ratio \(1/3\), while the group does not contain \(\Alt(9)\). This is exactly the \(q=8\) solution in Case 1. Thus the large degree hypothesis is substantive rather than cosmetic: the value \(276\) removes every sporadic \(2\)-transitive row, with the largest degree supplied by \(Co_3\).

\begin{corollary}[]\label{cor:large-three-pile-range}
If \(n>92\) is not a power of \(3\), then \(\Alt(3n)\leq H_n\).
\end{corollary}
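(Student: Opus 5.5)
The plan is to apply Proposition~\ref{prop:large-degree-fpr-reduction} to \(G=H_n\) with \(N=3n\). Since \(n>92\), we have \(N=3n>276\). Write \(n=3^s t\) with \(3\nmid t\). Because \(n\) is not a power of \(3\), we have \(t>1\), and Theorem~\ref{thm:two-transitive} makes \(H_n\) \(2\)-transitive. If \(t\) is not a power of \(2\), Proposition~\ref{prop:odd-prime-residuals-three} already gives the conclusion. So the main case is \(t=2^a\) with \(a\geq 1\), although the argument below works uniformly and we need not split cases.

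We verify the three hypotheses in order.

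\emph{Hypothesis (a).} The degree \(N=3^{s+1}t\) is divisible by \(3\) and by a prime divisor of \(t>1\) with \(3\nmid t\). It is therefore not a prime power, so \(H_n\) is not of affine type.

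\emph{Hypothesis (b).} By Lemma~\ref{lem:section-seven-fpr} with \(k=3\), the controlled commutator \(c\) is nonidentity (it has order \(t>1\)) and satisfies \(\fpr(c)=(k-2)/k=1/3\).

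\emph{Hypothesis (c).} Suppose \(N\) is even, so \(t\) is even. Take \(p=2\) in Lemma~\ref{lem:section-seven-fpr}. Then \(h=c^{t/2}\) is an involution with \(\fpr(h)=1/3\). The point is that the power \(c^{t/2}\) acts as translation by \(t/2\) on the two moving piles, which has no fixed points there, while it fixes the remaining pile pointwise.

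With (a)--(c) in hand, Proposition~\ref{prop:large-degree-fpr-reduction} yields \(\Alt(3n)\leq H_n\).

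The only step requiring care is (c): we must confirm that the involution produced really has fixed point ratio exactly \(1/3\) and not larger. By Proposition~\ref{prop:gate}, \(c\) moves every point of piles \(k-r-1\) and \(k-1\) in full \(t\)-cycles in the residual coordinate, so no extra fixed points appear after taking the power \(t/2\). This is exactly the computation already made in Lemma~\ref{lem:section-seven-fpr}, so the corollary is essentially a bookkeeping combination of earlier results together with the numerical check \(3n>276\iff n>92\).
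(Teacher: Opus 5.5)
Your proposal is correct and takes essentially the same route as the paper. Because the degree is not a prime power, \(H_n\) is not of affine type. Lemma~\ref{lem:section-seven-fpr} supplies \(c\) with \(\fpr(c)=1/3\), and when \(t\) is even it also supplies the involution \(c^{t/2}\) with the same ratio. Proposition~\ref{prop:large-degree-fpr-reduction} then applies because \(3n>276\). As you note, your optional appeal to Proposition~\ref{prop:odd-prime-residuals-three} is unnecessary.
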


\begin{proof}
Theorem~\ref{thm:two-transitive} gives \(2\)-transitivity. Since \(n=3^s t\) with \(t>1\) and \(3\nmid t\), the degree \(3^{s+1}t\) is not a prime power, so \(H_n\) is not affine. Lemma~\ref{lem:section-seven-fpr} supplies an element of fixed point ratio \(1/3\). If the degree is even, then \(t\) is even and \(c^{t/2}\) is an involution with the same fixed point ratio. Apply Proposition~\ref{prop:large-degree-fpr-reduction}.
\end{proof}

Only the residual power of two values remain after Proposition~\ref{prop:odd-prime-residuals-three}. A second pass through the complete \(2\)-transitive list reduces the finite boundary before any explicit cycle calculation.

\begin{proposition}[]
\label{prop:finite-three-pile-reduction}
Let
\[
  n=3^s2^j\leq92,
  \qquad s\geq0,\quad j\geq1.
\]
If \(\Alt(3n)\not\leq H_n\), then
\[
  n\in\{2,4,8,12\}.
\]
\end{proposition}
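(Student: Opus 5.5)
We would list the candidate values first: \(n=3^s2^j\le 92\) with \(j\ge1\) gives
\[
n\in\{2,4,6,8,12,16,18,24,32,36,48,54,64,72\}.
\]
For each of them \(N=3n\) is divisible by both \(2\) and \(3\), hence not a prime power. By Theorem~\ref{thm:two-transitive} and the affine/almost simple dichotomy \cite[Theorem~4.1B]{DM}, \(H_n\) is therefore almost simple and \(2\)-transitive of degree \(N\le 276\). Lemma~\ref{lem:section-seven-fpr} supplies an element \(g=c\) with \(\fpr(g)=1/3\). Since \(t=2^j\) is even, it also supplies an involution \(h=c^{t/2}\) with \(\fpr(h)=1/3\). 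The plan is a second pass through Cameron's list \cite[Table~7.4]{Cameron} of almost simple \(2\)-transitive groups, restricted to the degrees \(N\in\{6,12,18,24,36,48,54,72,96,108,144,162,192,216\}\). We discard every non-alternating socle whose degree set misses these values or which cannot carry an element of fixed point ratio \(1/3\).

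The Lie type rows would be handled by the case analysis of Proposition~\ref{prop:large-degree-fpr-reduction}. Only its final appeal to \(N>276\) used the size of the degree, so the arithmetic survives and pins down the possible exceptions.

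\textbf{\(q>4\).} The conclusion is \(N\in\{6,9\}\), and only \(N=6\) (\(n=2\), socle \(\PSL(2,5)\)) is among our degrees.

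\textbf{\(q=4\).} The degrees \(65\) and \((4^d-1)/3\) are odd, so they do not occur.

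\textbf{\(q=3\).} This row is impossible modulo \(3\).

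\textbf{\(q=2\).} The projective degree \(2^d-1\) is odd. The symplectic computation forces \((m,\varepsilon)=(3,+1)\), giving \(N=36\) and \(n=12\). The case \(r=m\) needs \(2^m+\varepsilon=3\), which fails for \(m\ge 3\); one should also check the small symplectic degrees \(6\) and \(10\), which lie outside the range of \(m\) treated there. The remaining small degree \(N=28\) of \(\operatorname{Sp}_6(2)\) is not of the form \(3n\).

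We also have to rule out Lie type rows where \(q\) is small but the action is nonstandard. These are \(\PSL(2,11)\) on \(11\), \(\PSL(2,8)\cong{}^2G_2(3)'\) on \(28\), and \(\Alt(7)\) on \(15\). None of these degrees lies in our list.

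The sporadic rows have degrees \(11,12,22,23,24,176,276\), together with the \(M_{11}\) action of degree \(12\). Among our degrees only \(12\) and \(24\) occur.

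\textbf{\(N=12\) (\(n=4\)).} This is kept as a possible exception via \(M_{11}\) or \(M_{12}\).

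\textbf{\(N=24\) (\(n=8\)).} \(M_{24}\) has an element of order \(3\) with \(6\) fixed points on \(24\) points, an involution with \(8\) fixed points, and \(\fpr=8/24=1/3\). So \(n=8\) must also be retained.

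The alternating socle acting on \(\ell\)-subsets with \(\ell\ge2\) is excluded by \(2\)-transitivity, as in Lemma~\ref{lem:collapse-subset-product}. Small exceptional alternating actions, such as \(\Alt(5)\) on \(6\) points, are absorbed into \(n=2\).

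Collecting the survivors gives exactly \(n\in\{2,4,8,12\}\). The main obstacle is the completeness of the small degree sweep: verifying that Table~7.4 contributes nothing further at degrees such as \(18,36,48,72\). The \(\operatorname{Sp}_6(2)\) and \(\operatorname{Sp}_8(2)\) degrees \(28,36,120,136\) must be checked individually, and the \(\PSL(2,q)\) rows with \(q+1\in\{18,24,48,54,72,\ldots\}\) must be confirmed. For the latter we would first note that \(q+1=N\) forces \(q\in\{17,23,47,53,71,107,191\}\), all primes, so there is no subfield and Case~1 gives \(\fpr\le 2/(q+1)<1/3\).
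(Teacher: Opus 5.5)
Your strategy is the paper's. You sweep \cite[Table~7.4]{Cameron} over the fourteen degrees \(6,12,\ldots,216\), which you list correctly, dispose of the projective line rows by counting fixed points, and end with \(N\in\{6,12,24,36\}\).

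The step that fails as written is your reuse of Case~1 of Proposition~\ref{prop:large-degree-fpr-reduction}. It is not true that only the final appeal to \(N>276\) depends on the degree. Case~1 rests on Lemma~\ref{lem:liebeck-saxl-consequence}, which assumes \(N>276\). That assumption is exactly what discards every Lie type row over \(\mathbb F_q\) with \(q>4\) except the natural action of \(\PSL(2,q)\) on \(\operatorname{PG}(1,q)\). (Case~2 also uses it to force \(d\geq5\), but your parity remark makes that harmless.) So for \(N\leq216\) your conclusion ``\(N\in\{6,9\}\)'' for \(q>4\) is unsupported. Nothing else in your proposal treats the remaining \(q>4\) rows, since your last paragraph only covers \(\PSL(2,q)\).

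The repair is the degree check the paper performs:
\begin{enumerate}[(i)]
\item For \(\PSL(d,q)\leq G\leq\PGammaL(d,q)\), the degree \((q^d-1)/(q-1)\) is even only when \(q\) is odd and \(d\) is even. With \(q>4\), \(d\geq4\) and \(N\leq216\), this leaves only \(\PSL(4,5)\) on \(156\) points.
\item \(\PSU(3,q)\) with \(q\geq5\) gives degree \(126\), and the next degree exceeds \(216\).
\item Suzuki degrees \(q^2+1\) are odd.
\item Ree degrees with \(q\geq27\) exceed \(216\).
\item \(\PSU(3,3)\) on \(28\) points, which your \(q=3\) paragraph omits, is also excluded by degree.
\end{enumerate}
None of these degrees lies in your list, and with this inserted the proof is complete.

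Otherwise your route differs from the paper's in two small ways. First, you observe that every prime power \(q\) with \(q+1\) in your degree list is prime. This makes \(\PGammaL(2,q)=\PGL(2,q)\) and removes the need for the paper's Hilbert~90 treatment of field involutions. Second, you rerun the symplectic fixed point arithmetic, which is unnecessary: the symplectic degrees up to \(216\) are \(28,36,120,136\), and the degree intersection alone leaves only \(36\). The degree check also avoids the Aschbacher--Seitz input.
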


\begin{proof}
Put \(N=3n\). Theorem~\ref{thm:two-transitive} makes \(H_n\) \(2\)-transitive, and Lemma~\ref{lem:section-seven-fpr} supplies the involution
\[
  h=c^{\,2^{j-1}},
  \qquad \fpr(h)=\frac13.
\]
The possible degrees are
\[
\begin{split}
\mathcal D=\{&6,12,18,24,36,48,54,72,96,108,\\
             &144,162,192,216\}.
\end{split}
\]
Every member of \(\mathcal D\) is divisible by both \(2\) and \(3\), so it is not a prime power. Hence \(H_n\) is not affine and is therefore almost simple. Suppose that \(\Alt(N)\not\leq H_n\). We now inspect every nonaffine row of the complete \(2\)-transitive list in \cite[Table~7.4]{Cameron}. The alternating group in its natural action already contains \(\Alt(N)\), so that row is excluded by assumption.

\emph{Projective rows.}
Suppose
\[
  \PSL(d,q)\leq H_n\leq\PGammaL(d,q),
  \qquad
  N=\frac{q^d-1}{q-1}=1+q+\cdots+q^{d-1}.
\]
If \(q\) is even, then \(N\) is odd. If \(q\) is odd, then \(N\equiv d\pmod2\). Since \(N\) is even, \(q\) is odd and \(d\) is even. If \(d\geq6\), then
\[
  N\geq1+3+3^2+3^3+3^4+3^5=364,
\]
which is impossible. If \(d=4\), the bound \(N\leq216\) gives \(q\in\{3,5\}\), and the corresponding degrees are \(40\) and \(156\), neither of which belongs to \(\mathcal D\).

It remains to take \(d=2\), so \(N=q+1\). If \(h\in\PGL(2,q)\), then a nonidentity projective linear transformation fixes at most two points of \(\operatorname{PG}(1,q)\). Since \(|\Fix(h)|=(q+1)/3\), this leaves only \(q+1=6\). Suppose that \(h\notin\PGL(2,q)\). The field component of the involution \(h\) has order two, so \(q=q_0^2\) and its fixed field is \(\mathbb F_{q_0}\). Write \(h=A\varphi\), where \(\varphi\) is the field involution. Since \(h^2=1\) projectively, one has \(AA^\varphi=\lambda I\) for some \(\lambda\in\mathbb F_q^\times\). Applying \(\varphi\) and using \(A^\varphi A=\lambda I\) shows that \(\lambda^\varphi=\lambda\), so \(\lambda\in\mathbb F_{q_0}^{\times}\). Surjectivity of the norm \(\mathbb F_q^{\times}\to\mathbb F_{q_0}^{\times}\) permits a scalar rescaling of \(A\) such that \(AA^\varphi=I\). The matrix form of Hilbert's Theorem~90 then gives \(A=B^{-1}B^\varphi\), so \(h\) is projectively conjugate to \(\varphi\). It therefore fixes exactly the subline \(\operatorname{PG}(1,q_0)\). The equality \(\fpr(h)=1/3\) would give
\[
  \frac{q_0+1}{q_0^2+1}=\frac13,
  \qquad
  q_0^2-3q_0-2=0,
\]
which has no integral solution. Thus the projective rows leave only \(N=6\).

\emph{Symplectic rows.}
Here
\[
  N=2^{d-1}(2^d\pm1),
  \qquad d\geq3.
\]
For \(N\leq216\), the possible degrees are
\[
  28,\ 36,\ 120,\ 136;
\]
the next degree is \(496\). Their intersection with \(\mathcal D\) is \(\{36\}\).

\emph{Unitary, Suzuki and Ree rows.}
The simple unitary family has degree \(q^3+1\), with \(q>2\). Up to \(216\), these degrees are
\[
  3^3+1=28,\qquad
  4^3+1=65,\qquad
  5^3+1=126,
\]
none of which belongs to \(\mathcal D\). A Suzuki degree \(q^2+1\) is odd. In the simple Ree family the least parameter is \(q=27\), and \(q^3+1>216\).

\emph{Isolated and sporadic rows.}
The remaining rows of \cite[Table~7.4]{Cameron} have the following groups and degrees:
\[
\begin{array}{c|c}
\text{group or socle}&\text{degree}\\ \hline
\PSL(2,11),\ M_{11}&11\\
M_{11},\ M_{12}&12\\
\Alt(7)&15\\
M_{22}\text{ or }\operatorname{Aut}(M_{22})&22\\
M_{23}&23\\
M_{24}&24\\
\PGammaL(2,8)&28\\
HS&176\\
Co_3&276
\end{array}
\]
Their degrees intersect \(\mathcal D\) in \(\{12,24\}\).

All rows of the almost simple \(2\)-transitive list have now been exhausted. Consequently
\[
  N\in\{6,12,24,36\},
\]
and hence \(n\in\{2,4,8,12\}\).
\end{proof}

\begin{lemma}[]
\label{lem:cycle-isolation-boundary}
Let \(w\in\Sym(N)\) have an orbit of prime length \(p\), where
\[
  \frac N2<p<N.
\]
Then \(w^{(N-p)!}\) is a single \(p\)-cycle.
\end{lemma}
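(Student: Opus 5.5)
Write \(w\) as a product of disjoint cycles. One of them, say \(\gamma\), has length \(p\). The other cycles move points of the complement of that orbit, which has \(N-p\) points. The plan is to show that raising \(w\) to the power \(E=(N-p)!\) kills every cycle other than \(\gamma\), while \(\gamma^{E}\) is still a \(p\)-cycle. Since disjoint cycles commute, \(w^E\) is the product of the \(E\)th powers of the individual cycles, so the two cases can be treated separately.

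\emph{The other cycles.} Every cycle of \(w\) other than \(\gamma\) is disjoint from the orbit of length \(p\). Its length \(\ell\) therefore satisfies \(1\le\ell\le N-p\). Hence \(\ell\) divides \((N-p)!\), and the \(E\)th power of that cycle is the identity.

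\emph{The cycle \(\gamma\).} For a \(p\)-cycle, \(\gamma^E\) is again a \(p\)-cycle precisely when \(p\nmid E\), because \(p\) is prime and the order of \(\gamma\) is \(p\). Now \(p\) divides \((N-p)!\) exactly when \(p\le N-p\), that is, when \(p\le N/2\). The hypothesis \(p>N/2\) gives \(N-p<p\), so every factor of \((N-p)!\) is less than \(p\), and primality gives \(p\nmid(N-p)!\). Therefore \(\gamma^E\) is a \(p\)-cycle on the same orbit.

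\emph{Conclusion.} Combining the two cases, \(w^{(N-p)!}=\gamma^{E}\) is a single \(p\)-cycle. The hypothesis \(p<N\) is not needed for this conclusion; it only ensures that the orbit is proper, which is presumably used later together with primitivity and Jordan's theorem. The only point requiring any care is the remark that there can be only one orbit of length \(p\), since \(2p>N\). This also shows that the resulting \(p\)-cycle is determined by \(w\). There is no real obstacle; the key inequality is \(N-p<p\).
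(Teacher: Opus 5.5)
Your proof is correct and follows essentially the same route as the paper: cycles on the complement have length at most \(N-p\) and are killed by the exponent \((N-p)!\), while \(N-p<p\) with \(p\) prime gives \(p\nmid(N-p)!\), so the \(p\)-cycle survives as a \(p\)-cycle. Your side remark that the hypothesis \(p<N\) is not needed for this conclusion is also accurate.
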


\begin{proof}
The complement of the \(p\)-orbit has \(N-p\) points.  Hence every nontrivial cycle of \(w\) on that complement has length at most \(N-p\), and its length divides \((N-p)!\).  Thus \(w^{(N-p)!}\) fixes the complement pointwise. Since \(N-p<p\) and \(p\) is prime, \(p\nmid(N-p)!\).  Therefore the restriction of \(w^{(N-p)!}\) to the \(p\)-orbit is still a \(p\)-cycle.
\end{proof}

\begin{proposition}[]
\label{prop:explicit-power-two-boundary}
For \(n\in\{2,4,8,12\}\), one has \(\Alt(3n)\leq H_n\).
\end{proposition}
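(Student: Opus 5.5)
The plan is to use Jordan's theorem: a primitive group of degree \(N\) containing a \(p\)-cycle with \(p\leq N-3\) prime contains \(\Alt(N)\). By Theorem~\ref{thm:two-transitive}, \(H_n\) is \(2\)-transitive, hence primitive, for each \(n\in\{2,4,8,12\}\), since none of these is a power of \(3\). So for each of the degrees \(N=6,12,24,36\) it suffices to exhibit a single \(p\)-cycle with \(p\leq N-3\).

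To manufacture the cycle we use Lemma~\ref{lem:cycle-isolation-boundary}. For each \(n\) we pick an explicit short word \(w\) in \(\sigma,\rho\), and \(\sigma\rho\) is a natural first candidate. We then display its cycle decomposition on \(\Omega_{3,n}\), computed directly from \eqref{eq:shuffle} and \eqref{eq:pile-cycle}. The aim is an orbit of prime length \(p\) with \(N/2<p\leq N-3\). The lemma then makes \(w^{(N-p)!}\) a single \(p\)-cycle, and Jordan's theorem finishes the argument.

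The admissible primes are as follows.
\begin{itemize}
\item For \(N=12\) the candidates are \(p=7\) (here \(N-p=5\)).
\item For \(N=24\) the candidates are \(p=13,17,19\).
\item For \(N=36\) the candidates are \(p=19,23,29,31\).
\end{itemize}
The three displayed long prime orbits promised in the introduction presumably correspond to these three degrees.

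The case \(N=6\), i.e.\ \(n=2\), is too small for this window, since we would need \(3<p\leq 3\). We handle it separately. Here \(\sigma\) is \((1\,3\,4\,2)\) on \(\{0,\dots,5\}\), fixing \(0\) and \(5\). Also \(\rho=(0\,2\,4)(1\,3\,5)\). We compute the order and the structure of \(\langle\sigma,\rho\rangle\) directly, by producing a \(3\)-cycle (Jordan's theorem allows \(p=3\) when \(N-3\geq 3\), i.e.\ a \(3\)-cycle in a primitive group gives \(\Alt(N)\)). For example, a suitable commutator such as \(c=[\rho,u_d]\) or a product of \(\sigma\)-conjugates should be checked for a \(3\)-cycle. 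Alternatively, the group is \(2\)-transitive of degree \(6\), and Proposition~\ref{prop:signs} shows \(\sigma\) is an odd \(4\)-cycle. The only \(2\)-transitive groups of degree \(6\) are \(\PSL(2,5)\cong\Alt(5)\), \(\PGL(2,5)\cong\Sym(5)\), \(\Alt(6)\) and \(\Sym(6)\). In \(\PGL(2,5)\) an element of order \(4\) fixes no point of \(\operatorname{PG}(1,5)\) only if it is non-split, whereas ours fixes two points. A split element of order \(4\) in \(\PGL(2,5)\) is odd on \(6\) points iff it lies outside \(\PSL(2,5)\). So this route needs care, and I would instead simply exhibit a \(5\)-cycle together with the \(4\)-cycle to force order divisible by \(20\), or exhibit a \(3\)-cycle.

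The main obstacle is finding, for \(n=4,8,12\), a word whose cycle type contains a prime orbit in the required window. This is a finite search. I would start with \(\sigma\rho\), \(\sigma\rho^{-1}\), \(\sigma^2\rho\) and \(\sigma\rho\sigma\rho^{-1}\), tracing orbits of the map \(an+b\mapsto 3b+(a+1\bmod 3)\) by hand until a long prime cycle appears. I would then record the full cycle type so the reader can verify \(N-p<p\) and the claim that all other cycle lengths divide \((N-p)!\).
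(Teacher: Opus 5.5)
Your framework for \(n=4,8,12\) is the paper's: Theorem~\ref{thm:two-transitive} for primitivity, Lemma~\ref{lem:cycle-isolation-boundary} to isolate a prime cycle in the window \(N/2<p\le N-3\), then Jordan. Your list of admissible primes is also correct. But the whole content of this proposition is the existence of the witnesses, and you leave that to ``a finite search'', so as written nothing is proved. Your shortlist also does not reliably find them. Compute with \(\sigma(x)=\langle 3x\rangle_{N-1}\) for \(x<N-1\) and \(\rho(y)=\langle y+n\rangle_N\). For \(n=4\), \(\sigma\rho\) is conjugate to \(\rho\sigma\), which is an \(11\)-cycle plus a fixed point, and \(11>N-3=9\). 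For \(n=12\), none of your four words works. Both \(\sigma\rho\) and \(\sigma\rho^{-1}\) are conjugate to a \(35\)-cycle plus a fixed point. The word \(\sigma^2\rho\) has cycle type \(30+4+1+1\), and \(\sigma\rho\sigma\rho^{-1}\) has type \(33+3\). The paper uses \(\rho\sigma^a\) with \(a=2,1,3\) for \(n=4,8,12\), whose orbits have prime lengths \(7,17,23\); for \(n=4\) this orbit is \((2\ 11\ 3\ 9\ 8\ 10\ 6)\). Your \(\sigma^2\rho\) and \(\sigma\rho\) are conjugate to the first two, but for \(n=12\) you need something like \(\sigma^3\rho\).

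For \(n=2\), the fallback ``a \(5\)-cycle together with the \(4\)-cycle forces order divisible by \(20\)'' is wrong. \(\PGL(2,5)\cong\Sym(5)\) acting on \(\operatorname{PG}(1,5)\) has order \(120\) and contains \(5\)-cycles (for example \(x\mapsto x+1\)). It also contains odd elements of type \(4+1+1\) (for example \(x\mapsto 2x\)), so your \(4\)-cycle \(\sigma\) eliminates only \(\PSL(2,5)\) and \(\Alt(6)\). Your proposed source of a \(3\)-cycle also fails: here \(t=2\), so \(c=[\rho,u_1]=(0\ 1)(4\ 5)\) is a double transposition. The missing step is a one-line check on your own first candidate. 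We have \(\sigma\rho=(0\ 1\ 4)(3\ 5)\), so \((\sigma\rho)^3=(3\ 5)\) is a transposition, and with primitivity this gives \(H_2=\Sym(6)\). The paper argues the same way with the conjugate \(\rho\sigma=(0\ 2\ 3)(1\ 5)\).
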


\begin{proof}
We derive every cycle directly from one modular formula.  Put \(N=3n\), and write \(\langle z\rangle_m\) for the least nonnegative residue of \(z\) modulo \(m\).  Formula~\eqref{eq:shuffle} gives
\[
  \sigma(x)=\langle3x\rangle_{N-1}\quad(0\leq x<N-1),
  \qquad \sigma(N-1)=N-1,
\]
while \(\rho(y)=\langle y+n\rangle_N\).  Hence, for \(w_{n,a}=\rho\sigma^a\),
\begin{equation}
  w_{n,a}(x)=\langle r_{n,a}(x)+n\rangle_N,\qquad
  r_{n,a}(x)=
  \begin{cases}
    \langle3^a x\rangle_{N-1},&0\leq x<N-1,\\
    N-1,&x=N-1.
  \end{cases}
\label{eq:boundary-map}
\end{equation}
Thus every arrow below is checked by one reduction modulo \(N-1\), followed by addition of \(n\) modulo \(N\). For \(n=2\), formula~\eqref{eq:boundary-map} gives the complete decomposition
\[
  w_{2,1}=(0\ 2\ 3)(1\ 5),\qquad \Fix(w_{2,1})=\{4\}.
\]
Consequently \(w_{2,1}^3=(1\ 5)\).  Theorem~\ref{thm:two-transitive} and conjugation give every transposition, so \(H_2=\Sym(6)\). For \(n=4\), where \((N-1,3^a,n)=(11,9,4)\), formula~\eqref{eq:boundary-map} gives the orbit
\[
  w_{4,2}:\quad(2\ 11\ 3\ 9\ 8\ 10\ 6).
\]
For \(n=8\), where \((N-1,3^a,n)=(23,3,8)\), it gives the orbit
\[
  w_{8,1}:\quad
  (0\ 8\ 9\ 12\ 21\ 1\ 11\ 18\ 16\ 10\ 15\ 6\ 2\ 14\ 3\ 17\ 13).
\]
Finally, for \(n=12\), where \((N-1,3^a,n)=(35,27,12)\), it gives the orbit
\[
\begin{aligned}
  w_{12,3}:\quad
  &(0\ 12\ 21\ 19\ 35\ 11\ 29\ 25\ 22\ 10\ 1\ 3\\
  &\qquad 23\ 2\ 31\ 8\ 18\ 7\ 26\ 14\ 4\ 15\ 32).
\end{aligned}
\]
In each display the entries are distinct, every entry maps to the next by \eqref{eq:boundary-map}, and the final entry maps back to the first.  Thus these are orbits of the respective prime lengths
\[
  (p_4,p_8,p_{12})=(7,17,23).
\]
Each \(p_n\) satisfies
\begin{equation}
  \frac{3n}{2}<p_n\leq3n-3.
\label{eq:boundary-prime-range}
\end{equation}
By \eqref{eq:boundary-prime-range} and Lemma~\ref{lem:cycle-isolation-boundary}, \(H_n\) contains a single \(p_n\)-cycle. Theorem~\ref{thm:two-transitive} makes \(H_n\) primitive, and \(p_n\leq3n-3\). Jordan's prime cycle theorem \cite[Theorem~3.3E]{DM} therefore gives \(\Alt(3n)\leq H_n\).
\end{proof}

\begin{corollary}[]
\label{cor:alternating-three-piles}
If \(n\) is not a power of \(3\), then
\[
  \Alt(3n)\leq H_{3,n}.
\]
\end{corollary}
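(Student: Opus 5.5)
The statement is assembled from the preceding three-pile results by a case split on the residual factor. Write \(n=3^s t\) with \(s\) maximal, so \(3\nmid t\). Since \(n\) is not a power of \(3\), we have \(t>1\).

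\emph{Case 1: \(t\) is not a power of \(2\).} Proposition~\ref{prop:odd-prime-residuals-three} applies directly and gives \(\Alt(3n)\leq H_n\). This case needs no bound on \(n\).

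\emph{Case 2: \(t=2^j\) with \(j\geq1\).} Here \(n=3^s2^j\), and we split by size.
\begin{enumerate}[(a)]
\item If \(n>92\), Corollary~\ref{cor:large-three-pile-range} gives \(\Alt(3n)\leq H_n\). That corollary needs only that \(n\) is not a power of \(3\), so it covers every such \(n\) regardless of \(t\).
\item If \(n\leq92\), then \(n\) is exactly of the form treated in Proposition~\ref{prop:finite-three-pile-reduction}. That result says that if \(\Alt(3n)\not\leq H_n\), then \(n\in\{2,4,8,12\}\).
\item For those four values, Proposition~\ref{prop:explicit-power-two-boundary} shows directly that \(\Alt(3n)\leq H_n\). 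So containment holds for every \(n\leq92\) in this family.
\end{enumerate}

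There is no real obstacle here, since the heavy lifting is done in the cited propositions. The only step to check carefully is that the cases are exhaustive. Every \(n\) not a power of \(3\) has \(t>1\). Either \(t\) has an odd prime factor, which is Case 1, or \(t\) is a power of \(2\) with \(j\geq1\), which is Case 2. Case 2 then splits completely at \(n=92\).
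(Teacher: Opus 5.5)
Your proof is correct and uses the same ingredients as the paper: Proposition~\ref{prop:odd-prime-residuals-three} when the residual \(t\) has an odd prime factor, Corollary~\ref{cor:large-three-pile-range} for \(n>92\), and Propositions~\ref{prop:finite-three-pile-reduction} and~\ref{prop:explicit-power-two-boundary} for the small power-of-two residuals. The only difference is cosmetic: the paper splits first on whether \(n>92\) and then on \(t\), whereas you split on \(t\) first.
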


\begin{proof}
If \(n>92\), use Corollary~\ref{cor:large-three-pile-range}. Suppose \(n\leq 92\) and write \(n=3^s t\) with \(3\nmid t\). If \(t\) has an odd prime divisor, use Proposition~\ref{prop:odd-prime-residuals-three}. Otherwise \(t\) is a positive power of \(2\). Proposition~\ref{prop:finite-three-pile-reduction} either gives the conclusion directly or reduces to \(n\in\{2,4,8,12\}\), which is Proposition~\ref{prop:explicit-power-two-boundary}.
\end{proof}

\begin{proof}[Proof of Theorem~\ref{thm:classification}]
Part (i) is Proposition~\ref{prop:power-decks}. Part (ii) is Proposition~\ref{prop:four-pile-affine}. Assume that neither exceptional case occurs. Then \(n\) is not a power of \(k\), so Theorem~\ref{thm:two-transitive} gives \(2\)-transitivity. If \(k\geq 5\), Proposition~\ref{prop:alternating-at-least-five} gives \(\Alt(kn)\leq H_{k,n}\). If \(k=4\), write \(n=4^s t\) with \(4\nmid t\), the excluded affine case is exactly \(t=2\), so Proposition~\ref{prop:alternating-four-piles}(ii) applies. If \(k=3\), use Corollary~\ref{cor:alternating-three-piles}. Finally, Proposition~\ref{prop:signs} determines whether the group is alternating or symmetric, proving \eqref{eq:generic-classification}.
\end{proof}

\section{Pile groups containing the standard cycle}
\label{sec:pile-groups}

The cyclic theorem immediately determines a substantially larger family of generalized shuffle groups. For \(P\leq\Sym(k)\), write
\[
  \rho_\tau(an+b)=\tau(a)n+b
  \qquad(\tau\in P),
\]
and set
\[
  J_{P,n}=\Sh(P,n)
  :=\langle\sigma\rho_\tau:\tau\in P\rangle
  =\langle\sigma,\rho_\tau:\tau\in P\rangle.
\]
The equality uses the identity element of \(P\), which supplies \(\sigma\); it also fixes explicitly the shuffle convention used here. If \(C_k\leq P\), then \(H_{k,n}\leq J_{P,n}\).

\begin{theorem}[]\label{thm:cycle-containing-pile-groups}
Let \(k\geq 3\) and let \(C_k\leq P\leq\Sym(k)\).
\begin{enumerate}[(i)]
\item If \(n=k^f\), then
\[
  J_{P,n}\cong P\wr C_{f+1}
\]
in product action.

\item If \(k=4\) and \(n=2\cdot4^j\) for some \(j\geq 0\), then
\[
  J_{P,n}\cong\AGL(2j+3,2).
\]

\item In all remaining cases,
\[
J_{P,n}=
\begin{cases}
\Alt(kn),&
\displaystyle\binom{k}{2}\binom{n}{2}\equiv0\pmod2
\ \text{and}\ 
\bigl(n\equiv0\pmod2\ \text{or}\ P\leq\Alt(k)\bigr),\\
\Sym(kn),&\text{otherwise}.
\end{cases}
\]
\end{enumerate}
\end{theorem}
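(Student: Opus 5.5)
We treat the three parts separately, using that \(H_{k,n}\leq J_{P,n}\) whenever \(C_k\leq P\).

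\textbf{Part (i).} We reuse the base-\(k\) word model from Proposition~\ref{prop:power-decks}. Identify the deck with \((\mathbb Z/k\mathbb Z)^{f+1}\). Then \(\sigma\) cyclically rotates coordinates, and \(\rho_\tau\) applies \(\tau\) to the leading coordinate, fixing the others. The conjugates \(\sigma^{-i}\rho_\tau\sigma^i\) apply \(\tau\) in the \(i\)th coordinate only. Hence they generate the base group \(P^{f+1}\), which \(\sigma\) normalizes by permuting factors cyclically. It remains to check that \(\langle\sigma\rangle\cap P^{f+1}=1\). A nontrivial power of \(\sigma\) moves some word with distinct coordinates to one in a different coordinate order, whereas base group elements preserve the coordinate-wise ``pattern''. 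A cleaner check is that the induced action on coordinate positions is well defined: \(P^{f+1}\rtimes C_{f+1}\leq\Sym(k)\wr\Sym(f+1)\), and the projection to \(\Sym(f+1)\) is faithful on \(\langle\sigma\rangle\). This gives \(J_{P,n}\cong P\wr C_{f+1}\) in product action.

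\textbf{Part (ii).} Here \(k=4\) and \(C_4\leq P\). If \(P=C_4\), Proposition~\ref{prop:four-pile-affine} applies. Otherwise \(P\) is \(D_8\), \(\Alt(4)\) or \(\Sym(4)\); since \(\Alt(4)\) contains no \(4\)-cycle, \(P\in\{D_8,\Sym(4)\}\). We need every \(\rho_\tau\) to be affine on \(\mathbb F_2^m\). Each \(\rho_\tau\) acts only on the two pile bits \((x_{m-2},x_{m-1})\), as \(\tau\) acting on \(\mathbb F_2^2\) under the labelling \(a=x_{m-2}+2x_{m-1}\), and it fixes all other bits. Every permutation of the four points of \(\mathbb F_2^2\) is affine, since \(\AGL(2,2)\cong\Sym(4)\). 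So \(\rho_\tau\) is an affine map of \(V\) acting block-diagonally, and \(J_{P,n}\leq\AGL(m,2)\). Combined with \(\AGL(m,2)=H_{4,n}\leq J_{P,n}\), this gives equality. This observation is the crux of (ii) and is easy.

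\textbf{Part (iii).} By Theorem~\ref{thm:classification}(iii), \(\Alt(kn)\leq H_{k,n}\leq J_{P,n}\), so \(J_{P,n}\) is \(\Alt(kn)\) or \(\Sym(kn)\), and only parity remains. The generators are \(\sigma\) and \(\rho_\tau\) for \(\tau\in P\). By Proposition~\ref{prop:signs}, \(\sgn\sigma=(-1)^{\binom k2\binom n2}\). Since \(\rho_\tau\) is \(n\) disjoint copies of \(\tau\), \(\sgn\rho_\tau=\sgn(\tau)^n\). So all \(\rho_\tau\) are even if and only if \(n\) is even or \(P\leq\Alt(k)\). This is exactly the displayed condition. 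Note it is consistent with Theorem~\ref{thm:classification}: for \(P=C_k\), \(C_k\leq\Alt(k)\) if and only if \(k\) is odd, matching \(n(k-1)\equiv0\).

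The main obstacle is the faithfulness and semidirect-product bookkeeping in (i). The remaining parts reduce immediately to earlier results.
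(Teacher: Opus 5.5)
Your proposal is correct and follows the paper's proof essentially step for step: the base-\(k\) word model with the conjugates of \(\rho_\tau\) for (i), the observation that every \(\rho_\tau\) is affine because \(\AGL(2,2)\) acts as all of \(\Sym(4)\) on \(\mathbb F_2^2\) together with Proposition~\ref{prop:four-pile-affine} for (ii), and \(\Alt(kn)\leq H_{k,n}\leq J_{P,n}\) with \(\sgn(\rho_\tau)=\sgn(\tau)^n\) for (iii). Your projection to \(\Sym(f+1)\) is a sound way to check \(\langle\sigma\rangle\cap P^{f+1}=1\), a detail the paper leaves implicit; the fixed-zero-word argument of Proposition~\ref{prop:power-decks} does not carry over because \(P^{f+1}\) need not be regular, so you can drop the vaguer ``pattern'' remark.
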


\begin{proof}
Part (i) is \cite[Theorem~1.4(1)]{AMP}; it also follows directly by identifying the deck with \((f+1)\) base \(k\) digits. The pile group \(P\) acts on one digit, and conjugation by the digit rotation \(\sigma\) supplies independent copies of \(P\) on all coordinates. For part (ii), put \(m=2j+3\), so the deck has size \(2^m\). Identify the pile label with the first two binary coordinates and the depth with the remaining \(m-2\) coordinates. The shuffle \(\sigma\) is the linear cyclic rotation by two coordinates. Every permutation of the four pile labels is affine on \(\mathbb F_2^2\), since \(\AGL(2,2)=\Sym(4)\), and hence every generator \(\rho_\tau\) with \(\tau\in P\) lies in \(\AGL(m,2)\). By Proposition~\ref{prop:four-pile-affine},
\[
  H_{4,n}=\AGL(m,2)\leq J_{P,n},
\]
so equality holds. For part (iii), Theorem~\ref{thm:classification} gives
\(\Alt(kn)\leq H_{k,n}\leq J_{P,n}\), and hence \(J_{P,n}\) is alternating or symmetric. The induced pile permutation \(\rho_\tau\) is the product of \(n\) copies of \(\tau\), so
\[
  \operatorname{sgn}(\rho_\tau)=\operatorname{sgn}(\tau)^n.
\]
Consequently every \(\rho_\tau\) is even precisely when either \(n\) is even or \(P\leq\Alt(k)\). Combining this with \eqref{eq:shuffle-sign} gives the asserted dichotomy.
\end{proof}

\begin{corollary}[]
\label{cor:alternating-pile-groups-odd}
Let \(k\geq 3\) be odd. Then the conjectural classification of \(\Sh(\Alt(k),n)\) in \cite[Conjecture~5.2]{XZZZ} holds for every \(n\):
\[
\Sh(\Alt(k),n)=
\begin{cases}
\Alt(k)\wr C_{f+1},&n=k^f,\\
\Alt(kn),&
n\neq k^f\ \text{and}\ 
\displaystyle\binom{k}{2}\binom{n}{2}\equiv0\pmod2,\\
\Sym(kn),&
n\neq k^f\ \text{and}\ 
\displaystyle\binom{k}{2}\binom{n}{2}\equiv1\pmod2.
\end{cases}
\]
\end{corollary}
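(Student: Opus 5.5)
The plan is to specialize Theorem~\ref{thm:cycle-containing-pile-groups} to \(P=\Alt(k)\). Since \(k\) is odd, the standard \(k\)-cycle \((0\ 1\ \cdots\ k-1)\) is an even permutation, so \(C_k\leq\Alt(k)\leq\Sym(k)\) and the theorem applies with \(P=\Alt(k)\). Because \(k\) is odd, \(k\neq4\), so case (ii) of that theorem never arises. This leaves two cases to check.

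\emph{Case \(n=k^f\).} Part (i) gives \(\Sh(\Alt(k),n)=J_{\Alt(k),n}\cong\Alt(k)\wr C_{f+1}\) in product action, which is the first line of the displayed formula.

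\emph{Case \(n\) not a power of \(k\).} Part (iii) applies, and its alternating condition reads
\[
\binom{k}{2}\binom{n}{2}\equiv0\pmod2\quad\text{and}\quad\bigl(n\equiv0\pmod2\ \text{or}\ P\leq\Alt(k)\bigr).
\]
With \(P=\Alt(k)\) the parenthetical disjunction holds automatically. The condition therefore reduces to \(\binom{k}{2}\binom{n}{2}\) being even, which gives \(\Alt(kn)\) in that case and \(\Sym(kn)\) otherwise. This is exactly the second and third lines of the corollary. Equivalently, every generator \(\rho_\tau\) with \(\tau\in\Alt(k)\) is even, so by Proposition~\ref{prop:signs} the parity of the group is governed entirely by \(\sgn(\sigma)\).

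The only point needing care is confirming that the notation \(\Sh(\Alt(k),n)\) in \cite[Conjecture~5.2]{XZZZ} agrees with \(J_{\Alt(k),n}\) here. The paper fixes its convention as \(\langle\sigma\rho_\tau:\tau\in P\rangle=\langle\sigma,\rho_\tau\rangle\), and since \(1\in P\) the two generating sets coincide. The substantive work, namely \(\Alt(kn)\leq H_{k,n}\) for odd \(k\) and \(n\neq k^f\), is already contained in Theorem~\ref{thm:classification}, so no step here poses a real obstacle.
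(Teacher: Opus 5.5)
Your proposal is correct and follows the paper's proof: the paper also notes that a $k$-cycle is even for odd $k$, so $C_k\leq\Alt(k)$, and then applies Theorem~\ref{thm:cycle-containing-pile-groups} with $P=\Alt(k)$. Your additional checks make explicit what the paper leaves implicit: case (ii) cannot arise because $k\neq 4$, and the disjunction in part (iii) holds automatically when $P=\Alt(k)$.
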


\begin{proof}
A \(k\) cycle is even when \(k\) is odd, so \(C_k\leq\Alt(k)\). Apply Theorem~\ref{thm:cycle-containing-pile-groups} with \(P=\Alt(k)\).
\end{proof}

\section{Consequences and scope}
\label{sec:scope}

Theorem~\ref{thm:classification} proves \cite[Conjecture~1.10]{AMP} and all four clauses of \cite[Conjecture~5.1, p.~15]{XZZZ}. Theorem~1.5 of \cite{XZZZ} concerns the full pile group \(\Sh(\Sym(k),n)\), while its Section~5 poses the cyclic group \(\Sh(C_k,n)\) as Conjecture~5.1. Theorem~\ref{thm:cycle-containing-pile-groups} additionally determines every shuffle group whose pile group contains the standard cycle. Corollary~\ref{cor:alternating-pile-groups-odd} therefore proves the odd \(k\) portion of \cite[Conjecture~5.2]{XZZZ}, including \(k=3\), where \(\Alt(3)=C_3\). The even \(k\) portion of Conjecture~5.2 and the subsequent Conjecture~5.3 and Question~5.4 are not claimed. Theorem~\ref{thm:two-transitive} is constructive. Lemma~\ref{lem:digit-connectivity} also gives a uniform
statement about the digit digraph \(D_{k,t}\): it is strongly connected for all \(k,t\geq 2\), has a loop, and has directed diameter at most
\[
  3\lceil\log_t k\rceil-1.
\]
This removes every coprimality assumption from the block propagation mechanism. The threshold in \cite[Corollary~2, p.~5, and proof, p.~80]{BG} is \(1/\sqrt{p+1}\). At \(p=2\) it is \(1/\sqrt3\), and the exact limiting comparison appears in Proposition~\ref{prop:alternating-at-least-five}. The bound \(1/p\) belongs instead to the almost simple Corollary~3.

\section*{Declaration on the Use of AI Tools}
The authors used OpenAI tools to assist with basic language editing of portions of this manuscript. The mathematical content, claims, proofs, and conclusions are the authors' own. The resulting work, in its totality, is an accurate representation of the authors' underlying work, and the authors take full responsibility for the veracity and correctness of all material.

\end{document}